\pgfplotsset{width=10cm,compat=1.9}
\newtheorem{theorem}{Theorem}
\newtheorem{proposition}{Proposition}
\newtheorem{lemma}{Lemma}
\newtheorem{definition}{Definition}
\newtheorem{assumption}{Assumption}
\newtheorem*{remark}{Remark}
\renewcommand\p@subalgorithm{\thealgorithm}
\newcommand{\bxi}{\bm{\xi}}
\newcommand{\bmu}{\bm{\mu}}
\newcommand{\bSigma}{\bm{\Sigma}}
\newcommand{\bzeta}{\bm{\zeta}}
\newcommand{\blambda}{\bm{\lambda}}
\newcommand{\bv}{\bm{v}}
\newcommand{\bc}{\bm{c}}
\newcommand{\bD}{\bm{D}}
\newcommand{\bH}{\bm{H}}
\newcommand{\bd}{\bm{d}}
\newcommand{\ba}{\bm{a}}
\newcommand{\bh}{\bm{h}}
\newcommand{\bu}{\bm{u}}
\newcommand{\bx}{\bm{x}}
\newcommand{\by}{\bm{y}}
\newcommand{\bw}{\bm{w}}
\newcommand{\bz}{\bm{z}}
\newcommand{\bA}{\bm{A}}
\newcommand{\bB}{\bm{B}}
\newcommand{\bQ}{\bm{Q}}
\newcommand{\balpha}{\bm{\alpha}}
\newcommand{\bt}{\bm{t}}
\newcommand{\beq}{\begin{equation}}
\newcommand{\eeq}{\end{equation}}
\newcommand{\bal}{\begin{aligned}}
\newcommand{\ea}{\end{aligned}}
\newcommand{\bdm}{\begin{displaymath}}
\newcommand{\edm}{\end{displaymath}}
\DeclarePairedDelimiterX\Set[2]{\lbrace}{\rbrace}%
 { #1 \,\delimsize| \,\mathopen{} #2 }
\newcommand{\bs}[1]{\boldsymbol{#1}}
\newcommand{\mG}{\mathcal{G}}
\newcommand{\mH}{\mathcal{H}}
\newcommand{\mX}{\mathcal{X}}
\newcommand{\zz}{\check{z}}
\newcommand{\bzz}{\bm{\zz}}
\newcommand{\ones}{\bm{1}}
\newcommand{\zeros}{\bm{0}}
\DeclarePairedDelimiterX{\inp}[2]{\langle}{\rangle}{#1, #2}
\newcommand{\ubar}[1]{\underline{#1}}
\newcommand{\specialfootnote}[2]{%
  \begingroup
  \renewcommand{\thefootnote}{#1}%
  \footnote{#2}%
  \endgroup
}
\newcommand{\daggerfootnote}{\textsuperscript{†}}
\newcommand{\dollarfootnote}{\textsuperscript{§}}
\newcommand{\treefootnote}{\textsuperscript{‡}}
\newcommand{\starfootnote}{\textsuperscript{*}}
\newcommand{\replace}[2]{%
  \ifx\empty#1\empty%
    {\color{blue}#2}%
  \else%
    \ifx\empty#2\empty%
      {\color{orange}\sout{#1}}%
    \else%
	  {\color{orange}\sout{#1}}%
      {\color{blue}#2}%
    \fi%
  \fi%
}
\begin{document}
\RUNAUTHOR{Dey et al.}

\RUNTITLE{On Solving Chance-Constrained Models with GM Distribution}

\TITLE{On Solving Chance-Constrained Models with Gaussian Mixture Distribution}

\ARTICLEAUTHORS{
\AUTHOR{Shibshankar Dey}
\AFF{Department of Industrial Engineering and Management Science, Northwestern University, Evanston, IL \\
\EMAIL{shibshankardey2025@u.northwestern.edu }}

\AUTHOR{ Sanjay Mehrotra}
\AFF{Department of Industrial Engineering and Management Science, Northwestern University, Evanston, IL \\ 
\EMAIL{mehrotra@northwestern.edu}}
\AUTHOR{Anirudh Subramanyam}
\AFF{Department of Industrial and Manufacturing Engineering, Pennsylvania State University, University Park, PA \\
\EMAIL{subramanyam@psu.edu}}
} 

\ABSTRACT{%
\normalsize
	We study linear chance-constrained problems where the coefficients follow a Gaussian mixture distribution. We provide mixed-binary quadratic programs that give inner and outer approximations of the chance constraint based on piecewise linear approximations of the standard normal cumulative density function. We show that $O\left(\sqrt{\ln(1/\tau)/\tau} \right)$ pieces are sufficient to attain $\tau$-accuracy in the chance constraint. We also show that any desired optimality gap can be achieved under a constraint qualification condition by controlling the approximation accuracy.
       Extensive computations using a commercial solver show that problems with up to one thousand random coefficients specified with up to fifteen Gaussian mixture components, generated under diverse settings, can be solved to near optimality within 18 hours, while satisfying chance constraint satisfaction probabilities of up to $0.999$. The solution times are significantly lower for problems with fewer random coefficients and mixture terms. For example, problems with one hundred random coefficients, ten mixture terms, and a constraint satisfaction probability of $0.999$ can be solved in a minute or less. Sample average approximations fail to provide meaningful solutions even for the smaller problems. 
}%

\KEYWORDS{Gaussian mixture model (GMM), piecewise linear (PWL), approximation accuracy, continuity, differentiability, finite breakpoints, standard normal distribution.} 

\maketitle


\section{Introduction}\label{sec:intro}
 Chance-constrained models \cite{charnes1959chance,miller1965chance} ensure that constraints involving random parameters are satisfied with a desired probability. These models are known to be hard to solve, even in the presence of a single chance constraint, due to their non-convex feasible set \cite{luedtke2008sample, nemirovski2007convex}. We study linear chance-constrained optimization problems of the form:
\begin{subequations} \label{eq:ccp}
\begin{align}
Z^\star(\theta) := \min_{\bx \in \mX} \; & \bc^\top \bx  \\
 \text{s.t.} \; &  \mathbb{P}\left[\bxi^\top \bx \leq b \right] \geq \theta. \label{eq:cc}
\end{align}
\end{subequations}
Here, $\bx \in \mathbb{R}^n$ is the decision vector and $\mX$ represents its feasible region specified by deterministic constraints. We assume that $\mX$ is compact. We suppose that 
$\bxi$ is a random vector taking realizations in $ \mathbb{R}^n$. The constant $\theta \in (0, 1)$ specifies the desired probability of constraint satisfaction. 

We focus on the solvability of \eqref{eq:ccp} when the probability distribution of $\bxi$ is described by a $K$-component Gaussian mixture model (GMM). Specifically, the density function of $\bxi$ is $\sum_{k=1}^K w_k \mathcal{N}(\cdot \; \vert\; \bmu_k, \bSigma_k)$,
where $\mathcal{N}(\cdot\; |\; \bmu_k, \bSigma_k)$ is the density function of a multivariate normal vector with mean $\bmu_k \in \mathbb R^n$ and covariance matrix $\bSigma_k \in \mathbb R^{n \times n}$.  The vector $\bw \in \mathbb R^K_{+}$ specifies the mixture weights, satisfying $\ones^\top \bw = 1$, where $\ones$ denotes the vector of ones. We assume that $\bSigma_k$ is positive definite for all $k \in [K]$, where we define $[K] \coloneqq \{1, 2, \ldots, K\}$. Our motivation for specifying the chance constraint using a GMM stems from the fact that any probability density function can be approximated using a Gaussian mixture distribution to arbitrary accuracy \cite{titterington1985statistical,scott2015multivariate}.
This allows chance constraint models of problems where the uncertain data is multi-modal.
We list several recent applications of Gaussian mixture models to optimization problems arising in various domains in Section~\ref{sec:literature}.

\begin{remark}
	The chance constraint~\eqref{eq:cc} is an example of left-hand side uncertainty. Although we assume the right-hand side coefficient $b$ to be a given constant, it may also follow a (univariate) Gaussian mixture distribution. This case can be reduced to~\eqref{eq:ccp} by augmenting the decision vector $\bx$ to $(\bx, x')$,
    adding the constraint $x' = 1$ to $\mX$, replacing the objective coefficients $\bc$ with $(\bc, 0)$, and updating the chance constraint to $\mathbb{P}[\bxi^\top \bx - bx' \leq 0] \geq \theta$. 
    Similarly, the form of problem~\eqref{eq:ccp} is also general enough to accommodate constraints of the form, $\bxi^\top \left( \bH \bx + \bh \right) + \ba^\top \bx \leq b$, where $\bH \in \mathbb{R}^{m \times m'}$, $\bh \in \mathbb{R}^m$, and $\ba \in \mathbb{R}^{m'}$. This can be captured by setting $n = m+m'$, augmenting the decision vector $\bx$ to $(\bx, \bx')$, adding the constraints $\bx' = \bH \bx + \bh$ to $\mathcal{X}$, and replacing the objective coefficients $\bc$ with $(\bc, \zeros)$. 
\end{remark}

\begin{assumption}\label{assmp:positive-definiteness}
The covariance matrix $\Sigma_k$ is positive definite for all $k \in [K]$; specifically,
$\bx^\top \bSigma_k \bx > 0$ for all $\bx \in \mathbb{R}^n \backslash \{\mathbf{0}\}$. 
\end{assumption}

Define $p(\bx) \coloneqq \mathbb{P} \left[ \bxi^\top \bx \leq b \right]$ to be the probability function. Then, Assumption~\ref{assmp:positive-definiteness} implies that we can equivalently write $p(\bx)$ as
\vspace{-10pt}
\begin{subequations}\label{eq:cc_p_x_definition}
\begin{align}
	p(\bx) 
    &= \sum_{k=1}^K w_k \mathbb{P}\left[\bxi_k^\top \bx \leq b \, \big\vert \, \bxi_k \sim \mathcal N (\bmu_k, \bSigma_k) \right] \label{eq:basic-GMM-cc}\\
	&= \sum_{k=1}^K w_k \mathbb{P}\left[\bxi_k^\top \bx \leq b \, \big\vert \, \bxi_k^\top \bx \sim \mathcal N (\bmu_k^\top \bx, \bx^\top \bSigma_k \bx) \right] \label{eq:cc_probability_components} \\
	& = \sum_{k=1}^K w_k p_k(\bx), \\
    \text{where } p_k(\bx) &\coloneqq
    \begin{dcases*}\label{eq:cc_probability_standardized_form}
        \mathbbm{1}_{\geq 0}(b), & if $\bx = \mathbf{0}$, \\
        \Phi \left( \frac{b - \bmu_k^\top \bx}{\sqrt{\bx^\top \bSigma_k \bx}} \right) & otherwise.
    \end{dcases*}
\end{align}
\end{subequations}
Here, $\Phi$ denotes the standard normal cumulative distribution function (CDF) and $\mathbbm{1}_{\geq 0}$ denotes the indicator function of the nonnegative reals. We will see in Proposition~\ref{prop:reformulation0} and Lemma~\ref{lem:uniform-compactness} that  problem~\eqref{eq:ccp} is infeasible if $b < 0$. The chance constraint~\eqref{eq:cc} can now be equivalently expressed as $p(\bx) \geq \theta$. We denote the feasible region of problem~\eqref{eq:ccp} as:
\vspace{-10pt}
\begin{align}\label{def: P_theta}
    P(\theta) \coloneqq \{\bx \in \mathbb{R}^n \; | \; \bH\bx = \bh, \bA \bx \geq \bd, \; p(\bx)  \geq \theta \},
\end{align}
whenever $\mX = \{\bx \in \mathbb R^n | \; \bH\bx = \bh, \bA \bx \geq \bd \}$ is specified explicitly using linear equality and inequality constraints.  

\begin{remark}\label{rem:positive-definiteness}
 Assumption~\ref{assmp:positive-definiteness} and the resulting reformulation~\eqref{eq:cc_probability_standardized_form} of the chance constraint~\eqref{eq:cc} is without loss of generality. Indeed, suppose that $\bSigma_k$ has rank $r_k < n$. Then, it admits an eigenvalue decomposition; specifically, there exists $\bQ_k \in \mathbb{R}^{n \times n}$ with columns formed from the eigenvectors of $\bSigma_k$ such that $\bD_k = \bQ_k^\top \bSigma_k \bQ_k = \begin{pmatrix}
     \bD'_k & \zeros \\
     \zeros & \zeros
 \end{pmatrix}$, where $\bD'_k \in \mathbb{R}^{r_k \times r_k}$ is a diagonal matrix.
 Now, augment the decision vector $\bx$ to $(\bx, \by_k)$ and add the constraints, $\by_k = \bQ_k \bx$, to $\mathcal{X}$.
 Also, partition $\by_k = (\by'_k, \by''_k)$ such that $\by'_k \in \mathbb R^{r_k}$.
 Then, it can be readily verified that the associated $p_k(\bx)$ in~\eqref{eq:cc_probability_standardized_form} can be written as $\Phi\Big(\frac{b - {\mu_k}^\top \bx}{\sqrt{{\by_k'}^\top \bD_k' \by_k'}}\Big)$ with ${\by_k'}^\top \bD_k' \by_k' > 0$ for all $\by_k' \neq \zeros$. 
\end{remark}

The chance constraint, $p(\bx) \geq \theta$, can be nonconvex even in simple cases.
To visualize this, consider a two-dimensional problem with $\mX = [-15, 15]^2$, $K = 2$, eigenvalues of $\bSigma_1$ and $\bSigma_2$ are $\bD_1 = \mathrm{diag}(1.15, \, 0.65)$ and $\bD_2 = \mathrm{diag}(1.47, \, 0.33)$, respectively, with eigenvectors $\bQ_1$ and $\bQ_2$ shown below:
\begin{gather*}
w_1 = w_2 = 0.5, \;
\bmu_1 = \bmu_2 = \begin{bmatrix} 0.875 \\ 1.784 \end{bmatrix}, \;
\bQ_1 = \begin{bmatrix} 1 & -0.08 \\ 0.08 & 1 \end{bmatrix}, \;
\bQ_2 = \begin{bmatrix} 1 & -0.02 \\ 0.02 & 1 \end{bmatrix}.
\end{gather*}
The chance-constraint right-hand side coefficient $b = 6.7$. 
The left-hand side of Figure~\ref{fig:3D-surface} plots $p_1(\bx)$ and $p_2(\bx)$ whereas the right-hand side plots $p(\bx)$.
Observe that $p(\bx)$ is nonconvex over $\mathcal{X}$.
The non-convexity can also be seen from Figure~\ref{fig:contour}, which shows the contour plots of $p_1(\bx)$, $p_2(\bx)$ on the left and of $p(\bx)$ on the right.

\vspace{-10pt}
\begin{figure}[h]
	\centering
	\begin{subfigure}[b]{0.45\textwidth}
		\centering
		\includegraphics[width=\textwidth]{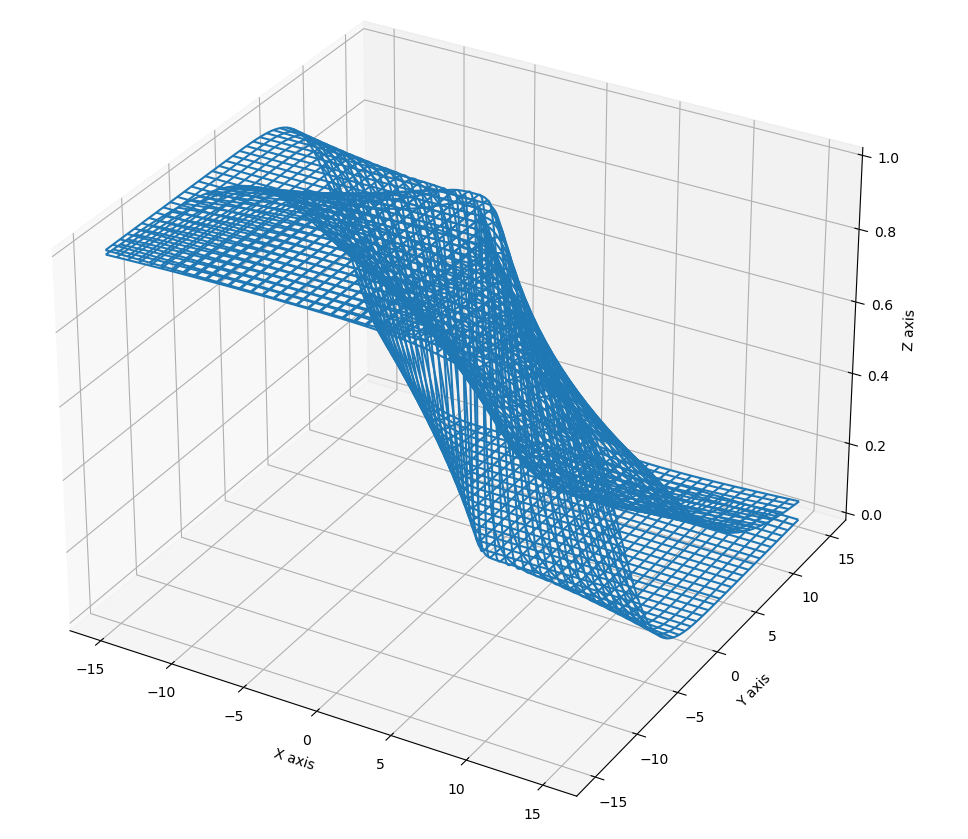
		}
	\end{subfigure}
	\hfill
	\begin{subfigure}[b]{0.45\textwidth}
		\centering
		\includegraphics[width=\textwidth]{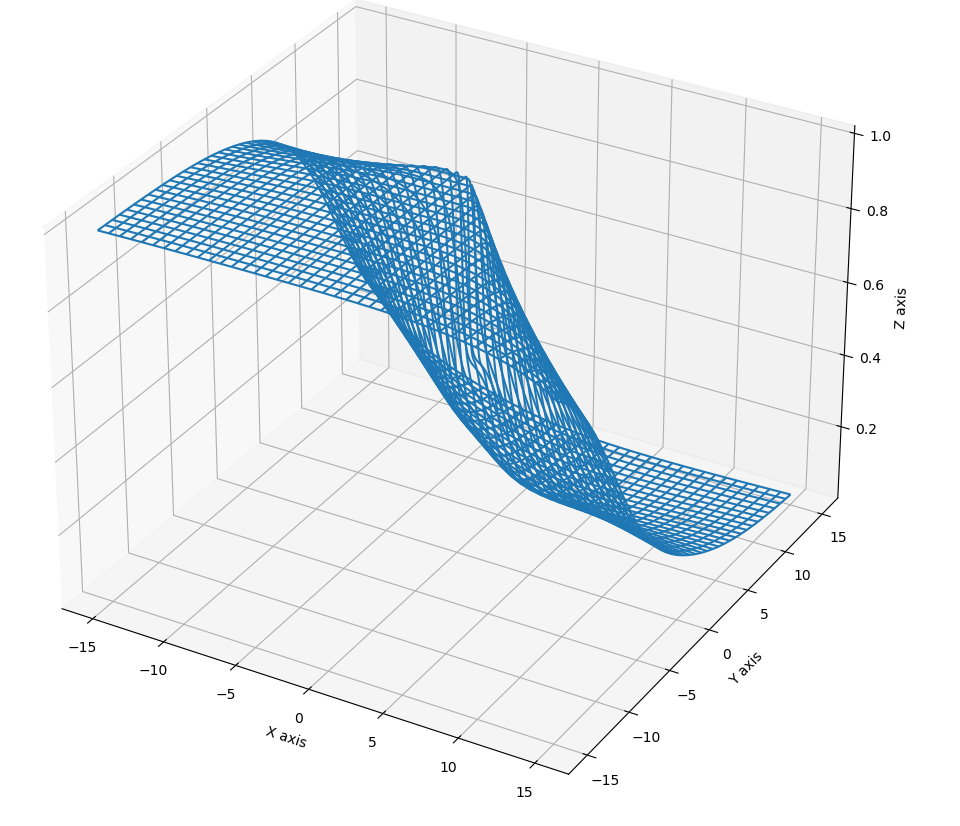}
	\end{subfigure}
	\caption{\centering Plots of $p_1(\bx)$, $p_2(\bx)$ (left) and $p(\bx)$ (right) over $\bx \in \mX$.}
	\label{fig:3D-surface}
\end{figure}
\vspace{-15pt}
\begin{figure}[h]
	\centering
	\begin{subfigure}[b]{0.45\textwidth}
		\centering
		\includegraphics[width=\textwidth]{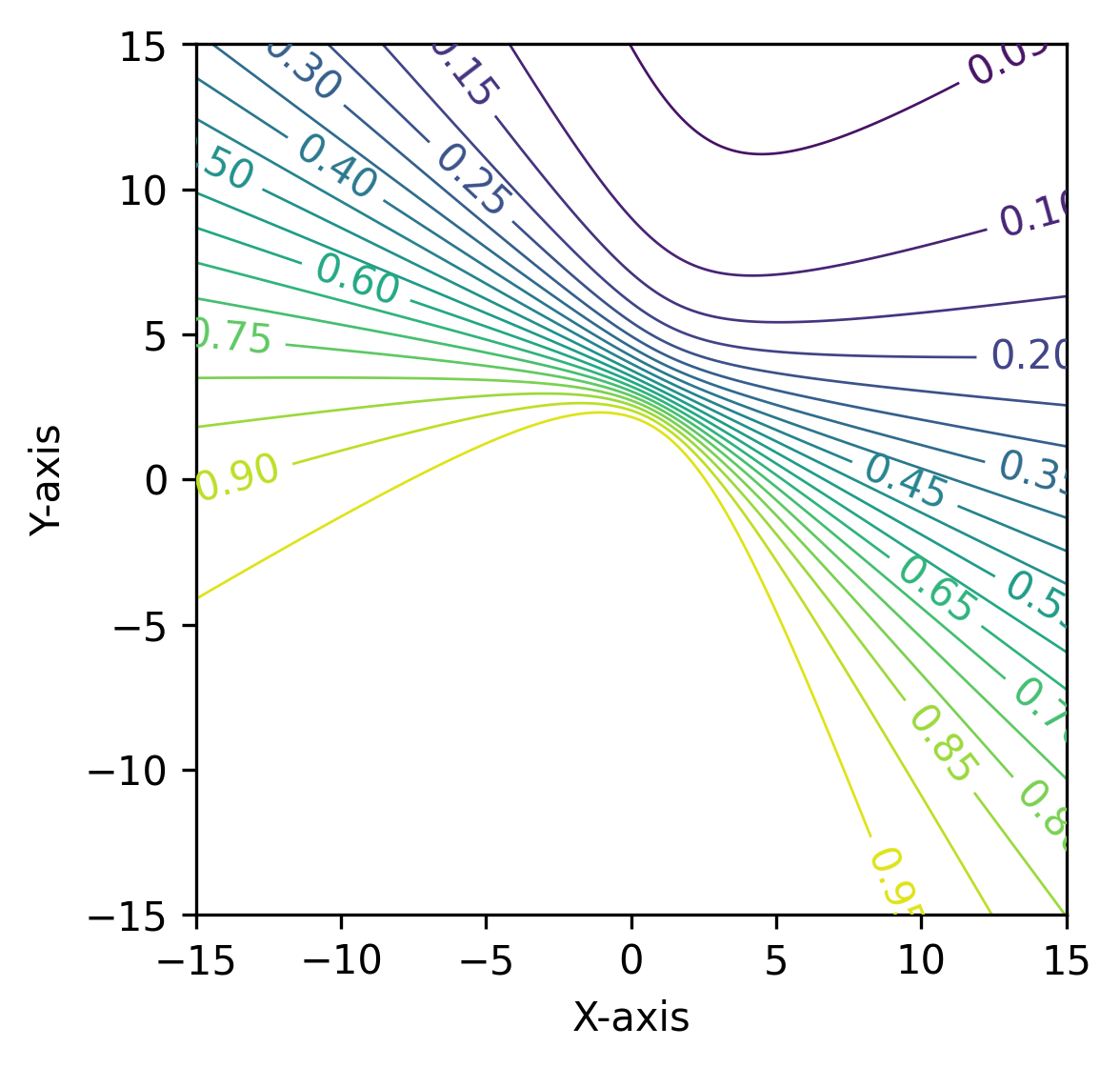
		}
	\end{subfigure}
	\hfill
	\begin{subfigure}[b]{0.45\textwidth}
		\centering
		\includegraphics[width=\textwidth]{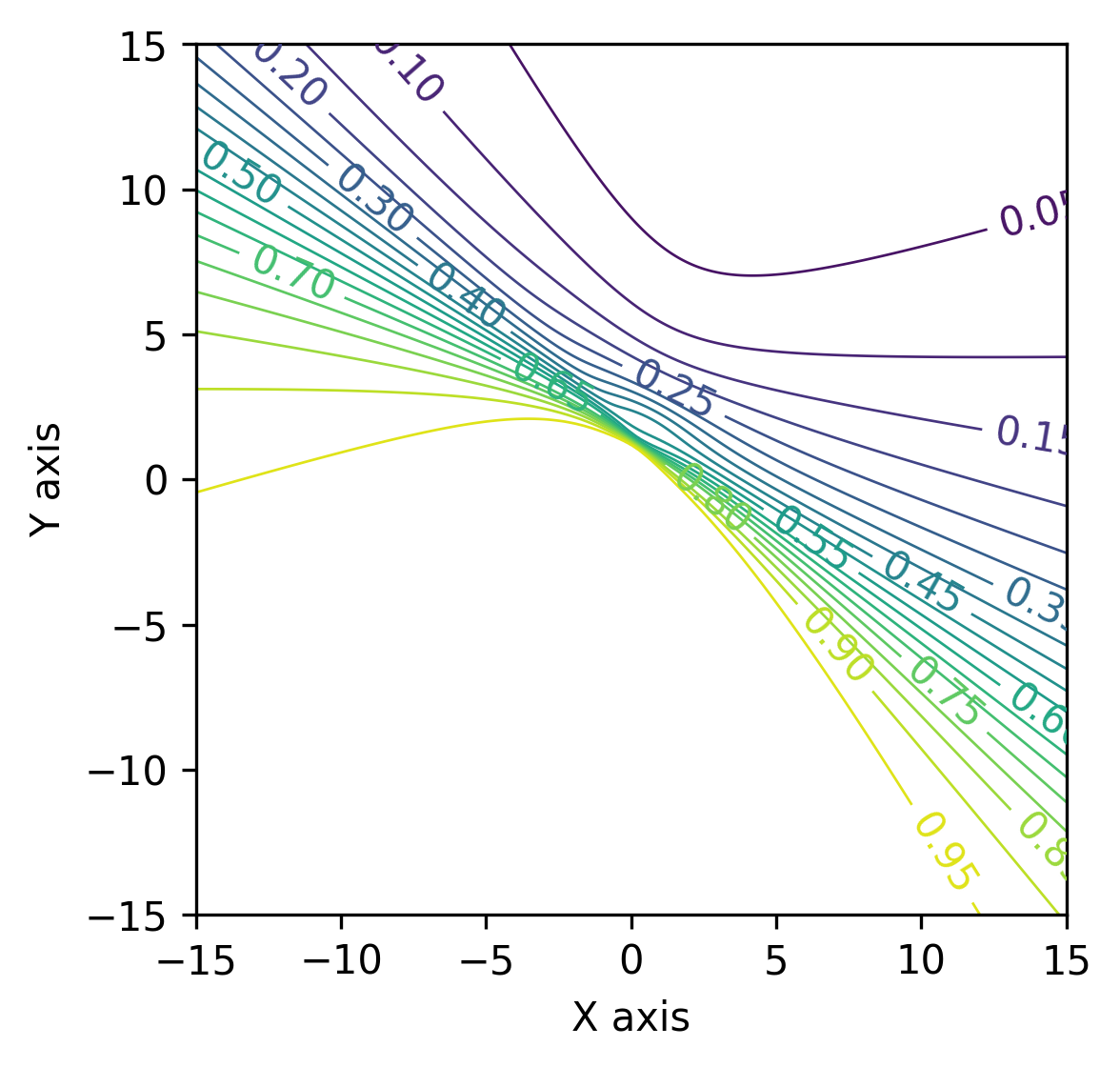}
	\end{subfigure}
	\caption{\centering Contour plots of $p_1(\bx)$, $p_2(\bx)$ (left) and $p(\bx)$ (right) over $\bx \in \mX$.}
	\label{fig:contour}
\end{figure}

The shape of the distribution of $\bxi^\top \bx$ can also change significantly as a function of $\bx$.
This is illustrated in Figure~\ref{table:Fig_tables}, where each plot corresponds to different solutions sampled from the feasible region of a problem in which the random vector follows a GMM with $K=5$ components
\footnote{\scriptsize The data for this experiment is generated as follows. We set $\theta = 0.9$, $n = 100$, $\mX = [-20, 20]^{100}$, and each entry of $\bc \in \mathbb{R}^{100}$ is randomly sampled from $[-1, 1]$. The chance constraint right-hand side coefficient $b = 1303.223$ is set to be the average of $\bx^\top \bmu_k + 4\sqrt{\bx^\top\bSigma_k\bx}$ across all $k$ and one thousand randomly sampled decisions $\bx \in \mX$. We set $K=5$, $\bw = (0.30, 0.10, 0.20, 0.05, 0.35)$, and each entry of $\bmu_k$ is sampled from $[\underline{\mu}_k, \bar{\mu}_k]$, which are $[10^2, 10^3]$, $[5 \cdot 10^2, 1.2 \cdot 10^3]$, $[-20 \cdot 10^3, -8 \cdot 10^3], [1.4 \cdot 10^3, 2.2 \cdot 10^3]$, $[-3 \cdot 10^3, -2 \cdot 10^3]$, for $k = 1, 2, \ldots, 5$, respectively. The eigenvalues; i.e., diagonal entries of $\bD_k$, are uniformly sampled from $[0, \bar{\nu}_k]$, where $\bar{\nu}_1, \bar{\nu}_2, \ldots, \bar{\nu}_5$ are evenly spaced points in $(0, 5]$. The eigenvector matrices $\bQ_k$ are randomly generated orthogonal matrices 
to ensure that $\bSigma_k = \bQ^\top_k \bD_k \bQ_k$ is positive definite. 
\label{dataGen-footnote}}.
In each plot, we normalize the $x$-axis and show the individual component-specific normal densities of $\bxi_k^\top \bx$, see equation~\eqref{eq:cc_probability_components}, for $k = 1, 2, \ldots, 5$ using blue, green, cyan, magenta, and orange colored curves, respectively. The density of $\bxi^\top \bx$ is shown in black color.

\vspace{-20pt}
\begin{center}
\begin{longtable}[!htbp]{ccc}
    \includegraphics[width=0.32\textwidth]{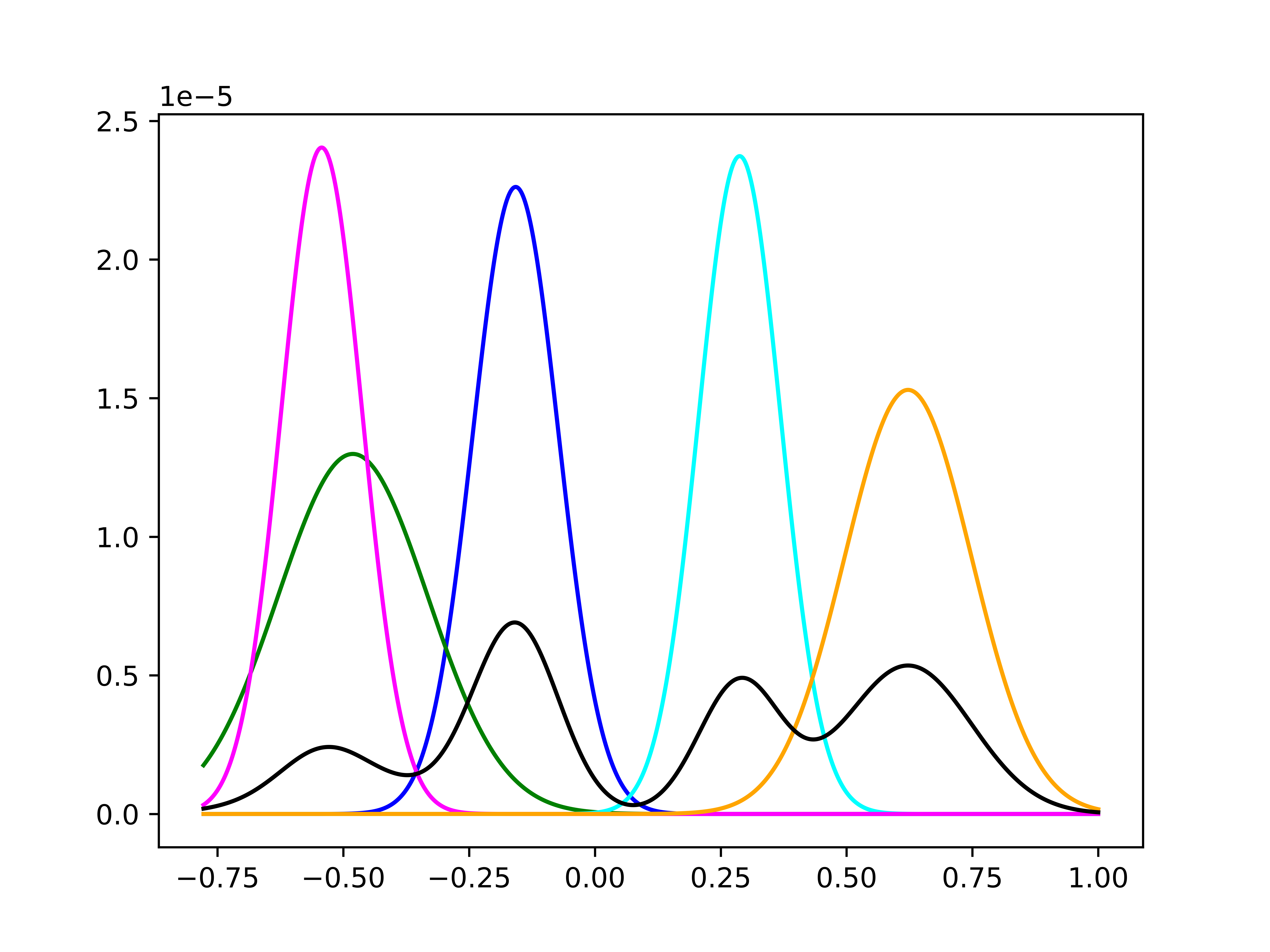} & 
    \includegraphics[width=0.32\textwidth]{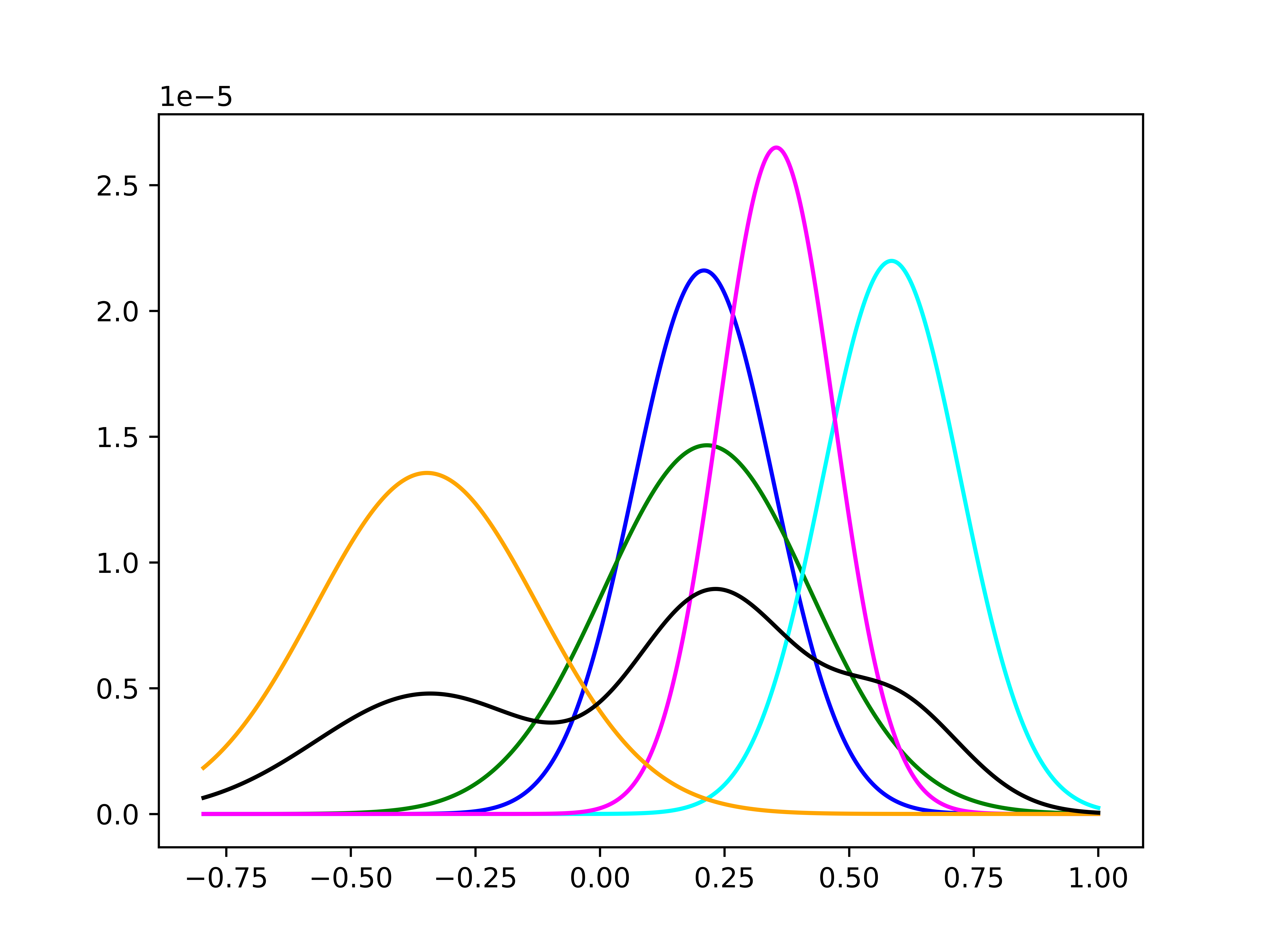} &
    \includegraphics[width=0.32\textwidth]{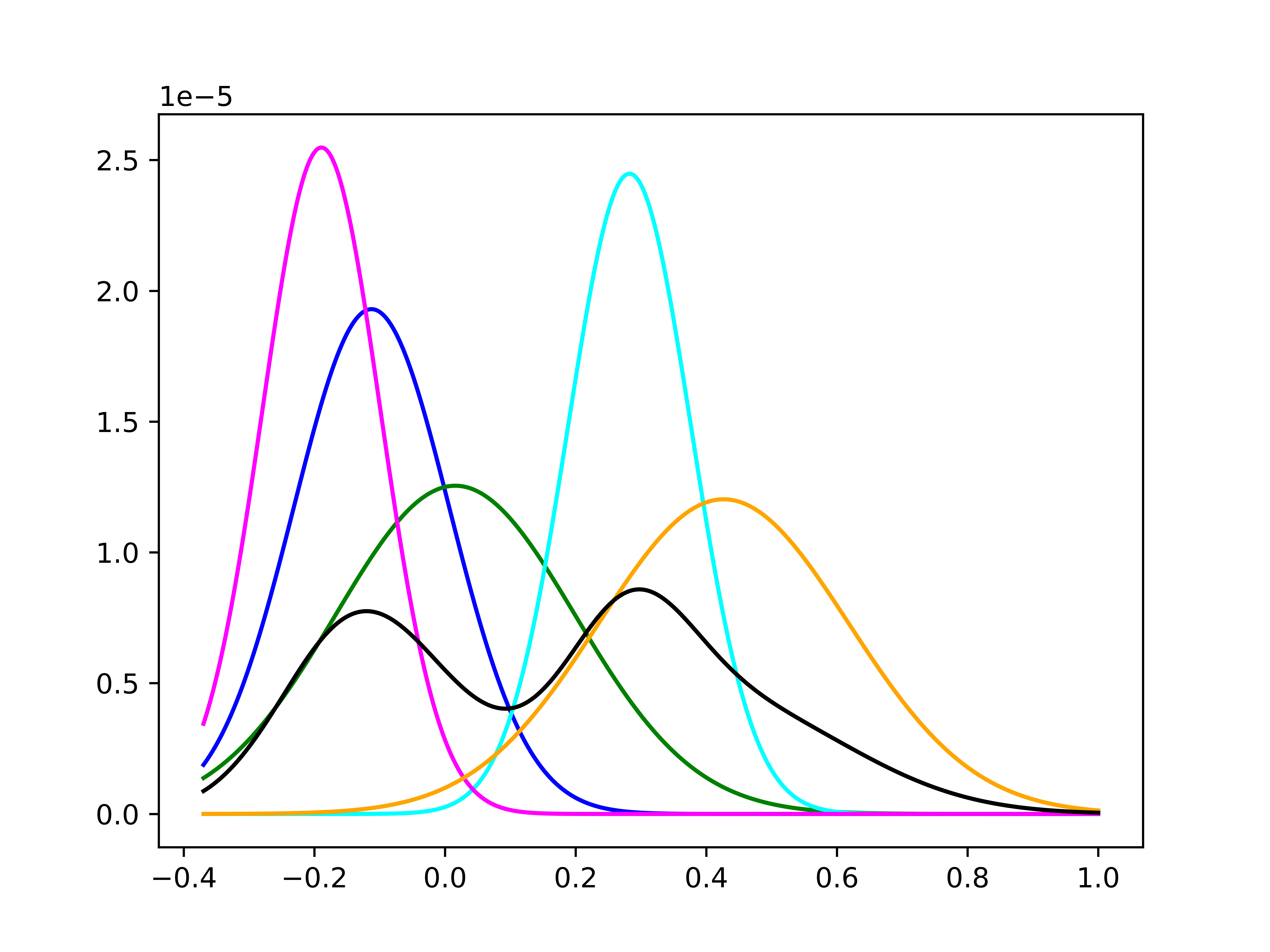} \\
    \vspace{-10pt}
    \includegraphics[width=0.32\textwidth]{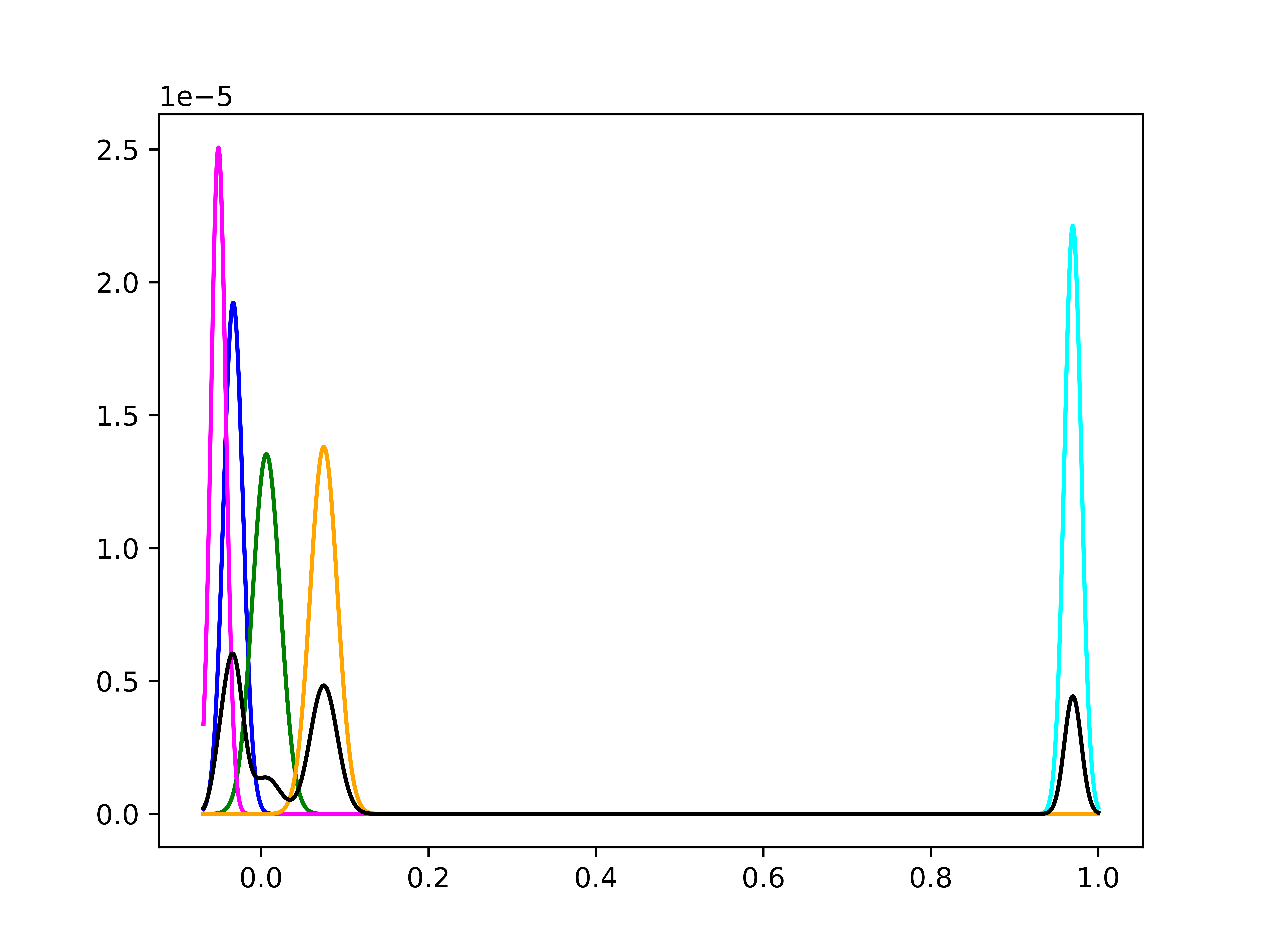}  & 
    \includegraphics[width=0.32\textwidth]{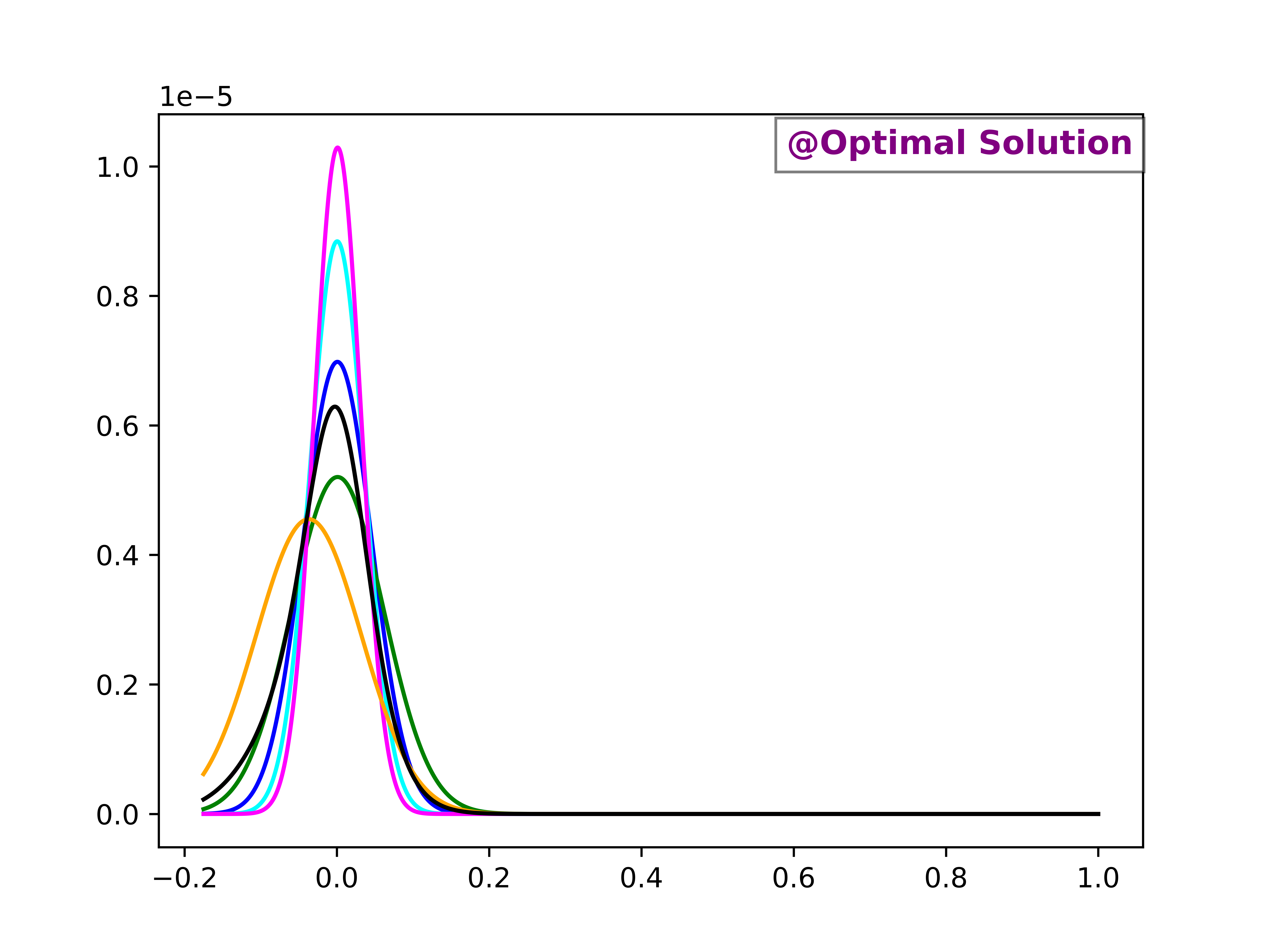} &
    \includegraphics[width=0.32\textwidth]{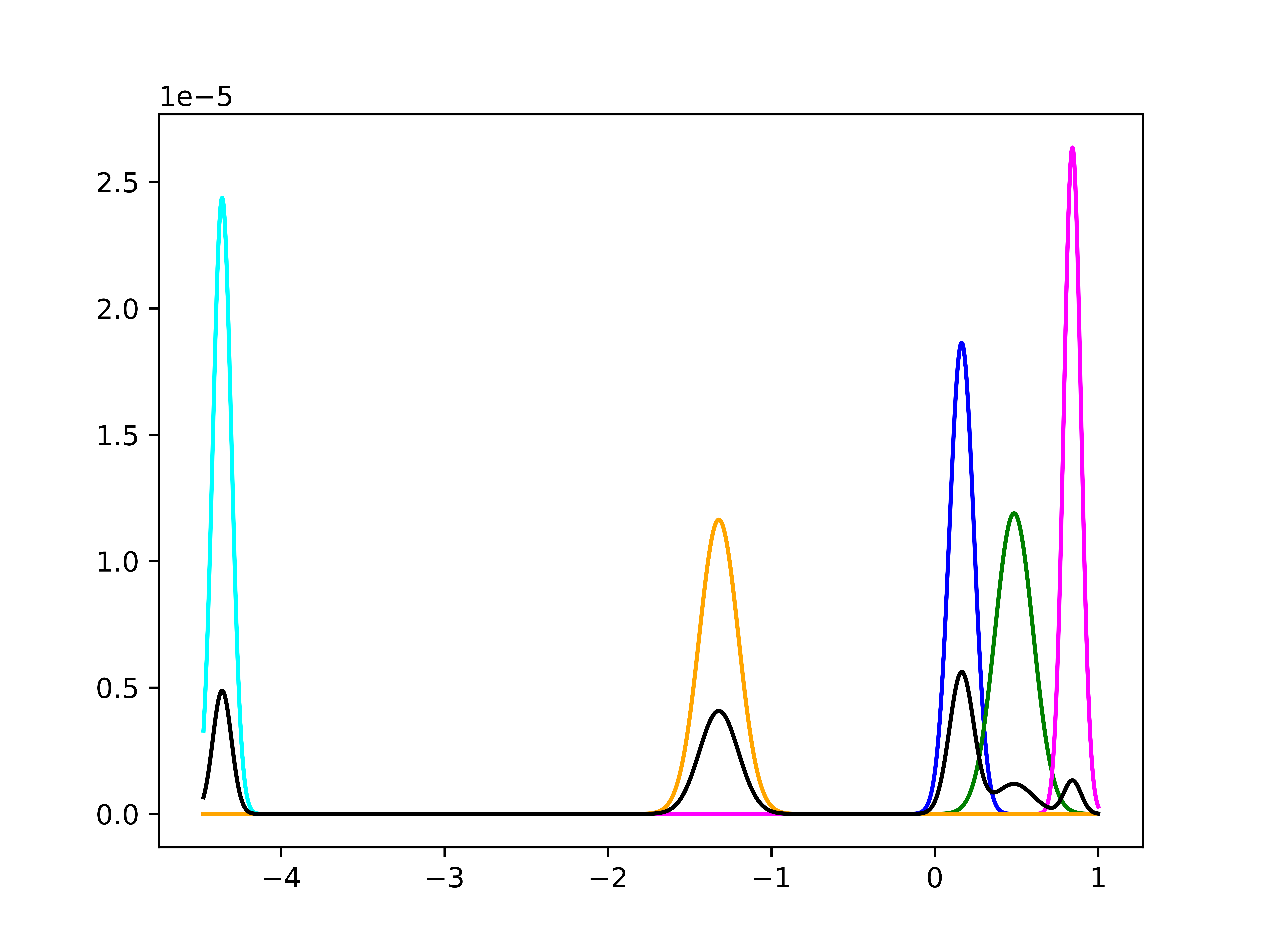} \\
        \vspace{-10pt}
    \includegraphics[width=0.32\textwidth]{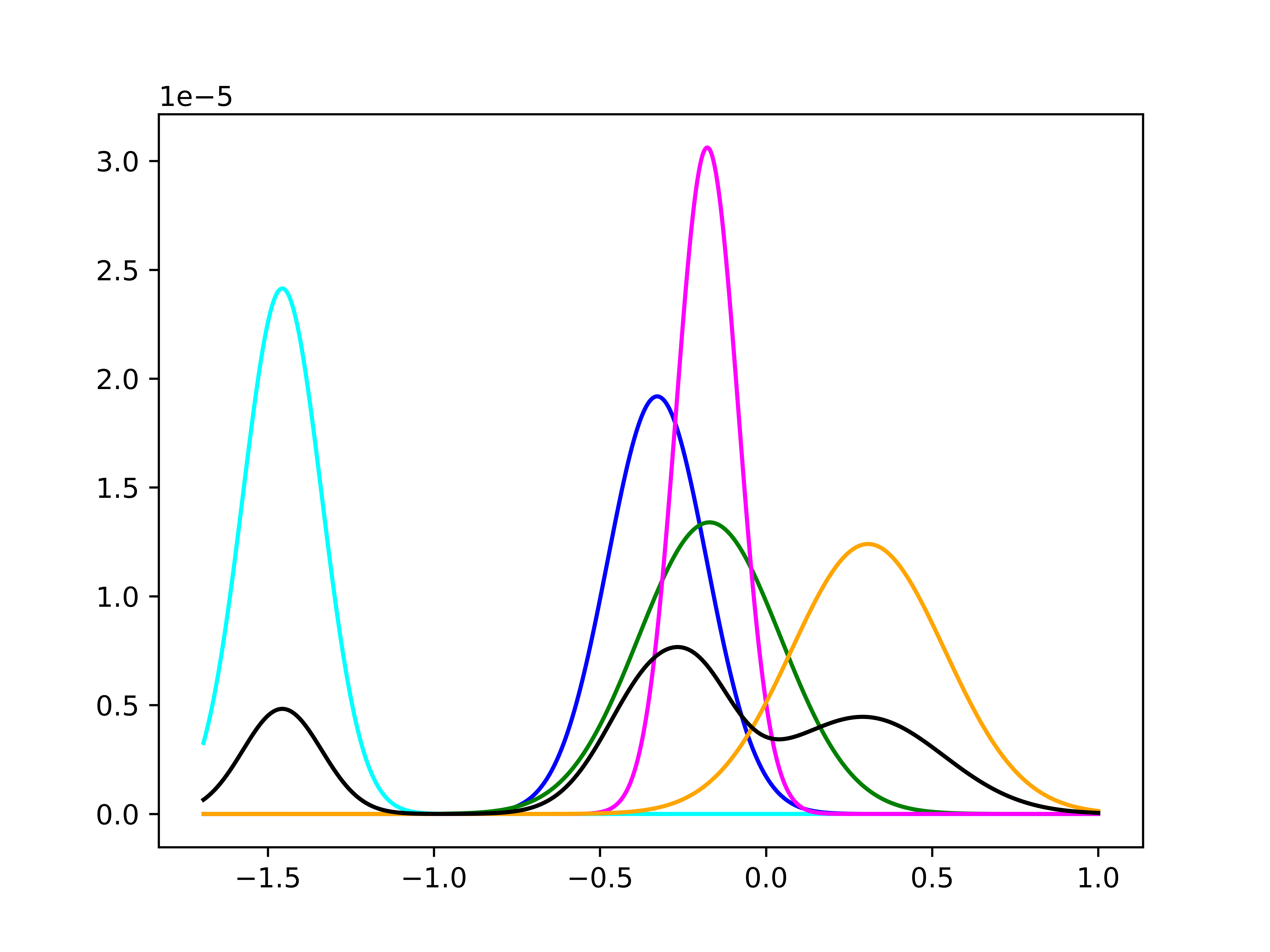} &
    \includegraphics[width=0.32\textwidth]{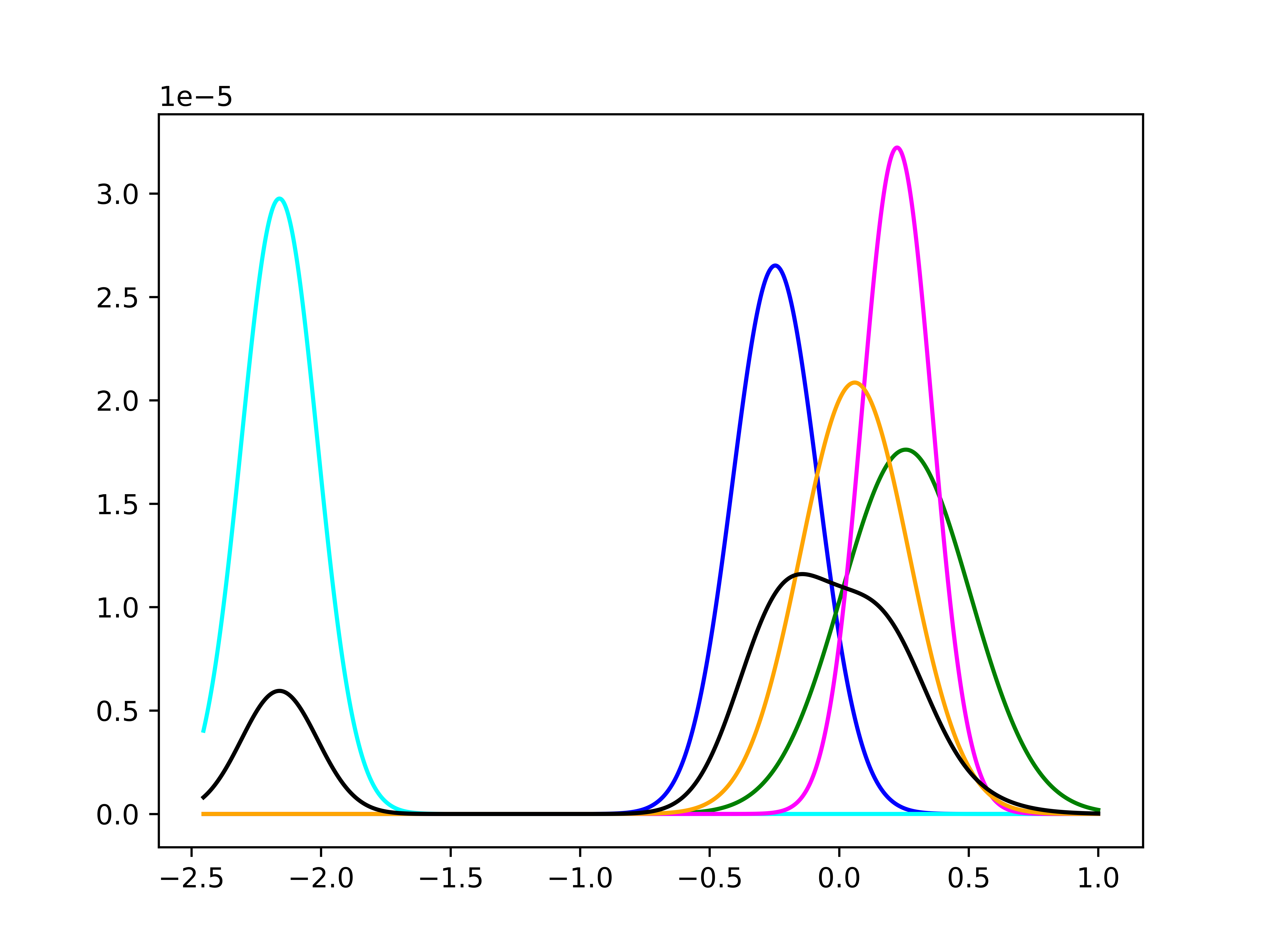} & 
    \includegraphics[width=0.32\textwidth]{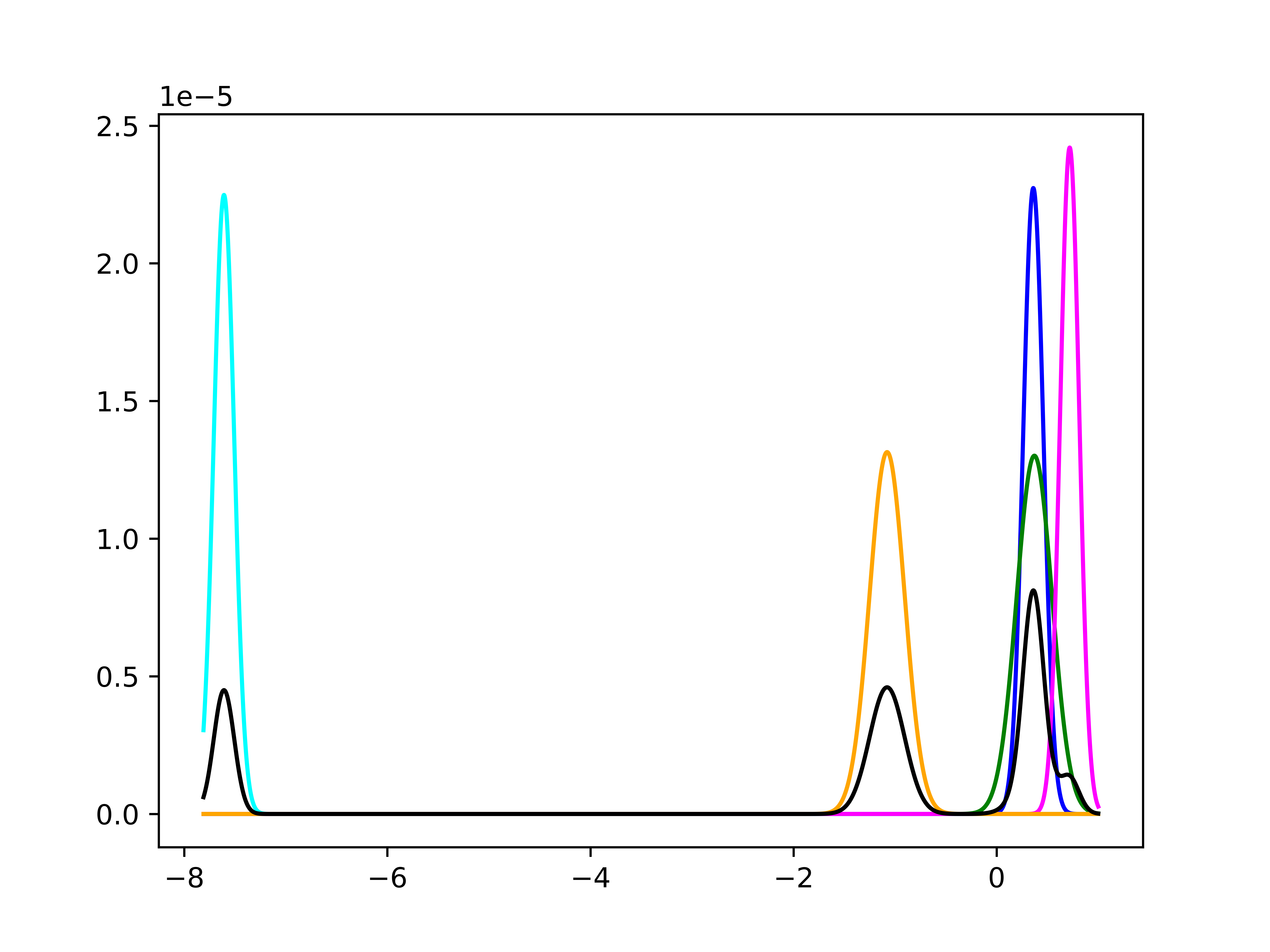}
        \vspace{-20pt}
\end{longtable}
\captionof{figure}{\centering \small \textbf{Probability density of $\bxi_k^\top \bx$ for $k=1, 2, \ldots, 5$ in blue, green, cyan, magenta, and orange colors, respectively, and of $\bxi^\top \bx$ in black color. Each plot uses a distinct $\bx$ sampled from the feasible region.\label{table:Fig_tables}}}%
\addtocounter{table}{-1}%
\end{center}
\vspace{-10pt}

We observe from Figure~\ref{table:Fig_tables} that for certain decisions (e.g., middle row and bottom right), the means of most component-specific distributions tend to cluster around a particular value. In contrast, for certain others (e.g., top left and right), the means are spread further apart. Additionally, certain component-specific distributions can completely shift from the negative end of the $x$-axis (e.g., blue, magenta, green in the middle left plot) to the positive end (e.g., see bottom and middle right plots). Moreover, the mass of $\bxi^\top \bx$ can vary significantly (e.g., middle and bottom right) with the decision.
The plots highlight the difficulty in estimating or predicting the location and shape of the overall mixture distribution, let alone the individual component-specific distributions, at the optimal solution $\bx^*(\theta)$.

\subsection{Contributions}\label{sec:contributions}
The key technical challenge lies in handling the weighted sum of the cumulative distribution functions in reformulation~\eqref{eq:cc_probability_standardized_form}, each of which is composed with a nonlinear fractional function of $\bx$.
When $\bxi$ is multivariate normal (i.e., when $K = 1$), it is well known that the chance constraint can be reformulated as a tractable second-order conic constraint \cite{ruszczynski2009stochastic}.
Unfortunately, this is no longer the case when $\bxi$ follows a GMM with $K > 1$ components.

To address this gap, we develop and analyze approximations of problem~\eqref{eq:ccp}. This is achieved by exploiting the special convex-concave structure of the univariate standard normal CDF $\Phi$ and replacing it with an efficient piecewise linear (PWL) approximation. We provide mixed-integer optimization formulations that can be iteratively refined to either inner- or outer-approximate the feasible set, namely $P(\theta)$, to any arbitrary user-specified accuracy. We refer to these formulations as `PWL-I' and `PWL-O', respectively. Unlike previous studies (reviewed in the next section), we do not impose any restrictive structural assumptions on the GMM, such as dependence structures among the component covariance matrices, or bounds on the total number of components.
Under suitable regularity assumptions, we also show that our formulations provide solutions up to any desired optimality tolerance. 
All of the proposed formulations can be solved using standard mixed-integer quadratic programming solvers. 

We perform an extensive computational study on the PWL-I, and PWL-O formulations, across five thousand synthetically generated problem instances with 100, 500, and 1,000 variables and up to fifteen mixture components for chance constraint satisfaction levels $\theta$ set at $0.95$, $0.99$, and $0.999$. If the complement of the chance constraint in~\eqref{eq:cc} models an unsafe system condition that causes failure, then $\theta = 0.999$ can be interpreted as a minimum reliability requirement of $99.9\%$, or equivalently, as a maximum allowed failure probability of $0.001$, thus modeling a decision-dependent rare event \cite{subramanyam2022chance,blanchet2024optimization}. 
Our experimental findings show that the majority of PWL-I and PWL-O approximation models are solved to optimality within a time limit of 18~hours, while also attaining the desired probabilities of constraint satisfaction. Most instances consisting of ten and five mixture components are solved within 12 and 4 hours, respectively. 
Finally, we also compare the computational performance of the proposed formulations with classical sample average approximations (SAA) \cite{luedtke2008sample, pagnoncelli2009sample, kim2015guide}.
We find that the SAA models take very large computational times and are unable to produce solutions with the desired optimality or feasibility tolerances.

\subsection{Notation}
All vectors and matrices as well as vector- and matrix-valued functions are typeset in boldface, whereas all scalars and scalar-valued functions are typeset in normal font.
The cumulative distribution and the probability density functions of the standard normal random variable are denoted by $\Phi$ and $\phi$, respectively.
The derivative of $\phi$ is denoted by $\phi'$. 
We use $\mathbb{R}_{+}$ to denote the set of non-negative real numbers. 
For any non-negative integer $N$, we use $[N]$ to denote the set $\{1, 2, \ldots, N\}$ and $[N]_0$ to denote $\{0, 1, 2, \ldots, N\}$.
Unless stated otherwise, $\|\cdot\|$ denotes the Euclidean norm for vectors and the Frobenius norm for matrices; we sometimes also use $\|\cdot\|_F$ to denote the latter. 
We use $\stackrel{\mathcal{U}}{\sim}$ to indicate sampling from a uniform distribution.

\section{Related Literature}\label{sec:literature}

For an overview of the theory and algorithms for chance-constrained optimization problems, we refer the reader to \cite{prekopa2003,dentcheva2009optimization}.
Approaches based on distributionally robust optimization are reviewed in \cite{xie2021distributionally,kucukyavuz2022chance,pang2023chance}.
For the models we study in this paper, the decision vector $\bx$ and the random parameters $\bxi$ are not separable. In the case of separable chance-constrained models, one can move the uncertainty entirely to the right-hand side. In this case, existing approaches are based on integer programming ideas \cite{sen1992relaxations,luedtke2010integer, kuccukyavuz2012mixing} and so-called $p$-efficient points to approximate the feasible set \cite{prekopa1990dual, dentcheva2013regularization}.

In the general non-separable case, existing methodologies for solving chance-constrained optimization problems can be categorized into sample-based and sample-free analytical approaches. 
The sample average approximation (SAA) method \cite{calafiore2005uncertain,nemirovski2006scenario,blanchet2024efficient,choi2024reduced, luedtke2008sample,pagnoncelli2009sample}, belonging to the first category, is a general strategy to approximate the probability in \eqref{eq:cc}. The method is based on drawing several, say $N$, i.i.d. Monte Carlo samples, $\bxi^1, \bxi^2, \ldots, \bxi^N$, of the random parameters and then approximating the probabilistic constraint function using the empirical estimate based on these samples,
\vspace{-15pt}
\begin{equation}\label{eq:saa}
p(\bx)=\mathbb{E}[\mathbbm{1}_{\geq 0}\left(b - \bx^\top \bxi \right)] \approx \frac{1}{N}\sum_{i=1}^N \mathbbm{1}_{\geq 0}\left(b - \bx^\top \bxi^i \right),
\end{equation}
where $\mathbbm{1}_{\geq 0}(z)$ is the indicator function of the non-negative reals that equals $1$ if $z \geq 0$ and $0$ otherwise.
Alternative sample-based approaches include quasi-Monte Carlo-based methods tailored for elliptically symmetric distributions \cite{vanAckooij2014,van2018sub}.
The quality of the resulting SAA solutions depends critically on the number $N$ of generated samples.
A key issue is their deviation from the true (unknown) optimal solution. The sampled-based solution might be unstable  when $N$ is small, meaning that small changes in the samples may have a significant impact on the obtained solutions \cite{henrion2004holder,ahmed2008solving,demiguel2009generalized}. Larger sample sizes are thus required to better approximate the optimal solution, but they come at the cost of increased computational difficulty \cite{Henrion2013:sample_size, Wang2021ChanceConstrainedBinPacking}. This is exacerbated in high-dimensional regimes where incorporating large sample sizes renders the solution of the SAA models impractical (see Section~\ref{sec:ComputationalResults}).  

Indeed, the challenge in SAA is the discontinuity of the indicator function $\mathbbm{1}_{\geq 0}$ which needs to be further reformulated. This reformulation is typically done using additional binary variables to yield a mixed-integer optimization problem \cite{luedtke2010integer,curtis2018sequential}, or approximated using continuous nonlinear functions, such as difference-of-convex functions \cite{hong2011sequential}, smooth differentiable functions \cite{geletu2017inner,pena2020solving}, or monotone convex functions \cite{nemirovski2007convex} that also subsume the conditional-value-at-risk (CVaR) approximation. Each of these strategies requires introducing at least $N$ new variables or constraints, thus increasing the computational complexity of solving the chance-constrained problem.

In contrast to sample-based methods, sample-free approaches typically exploit the structure of the distribution of $\bxi$ to obtain analytically computable approximations or reformulations of the chance constraints. Early pioneering results are based on so-called $\alpha$-concave measures and functions \cite{prekopa1971logarithmic,prekopa1973logarithmic, borell1974convex}. In these cases, one can show that feasible region is convex, so that classical results in convex optimization can be adapted to design algorithms and optimality conditions. 
Convexity of the feasible set can also be achieved when $\bxi$ has symmetric logarithmically concave density function \cite{lagoa2005probabilistically}. 

Another line of work aims to build (convex) outer approximations of the chance constraints. 
The popular CVaR approximation and other approaches based on Bernstein-type large deviation inequalities are examples of this approach \cite{rockafellar2000optimization,nemirovski2007convex}. More recent sample-free approaches based on large deviation theory develop analytical closed-form (albeit nonlinear) expressions of the probabilistic constraint function for GMM distributed uncertainties \cite{tong2022optimization}.

The use of GMMs to model uncertainties in stochastic optimization models has gained significant interest in recent literature. 
Applications of GMM to quantify uncertainties can be found in portfolio optimization \cite{luxenberg2024portfolio}, chemical engineering \cite{yang2023optimal,yang2024two}, and power system operations \cite{ke2015novel,yang2019analytical,xu2021tractable,fathabad2023asymptotically,yi2024discrete}. 
Joint chance-constrained optimization problems involving Gaussian mixture distributed randomness have been studied in \cite{yang2023optimal, boone2022spacecraft}, although such problems typically rely on using Boole's inequality to obtain conservative individual chance-constrained problems. 

The most closely relevant studies to our work are \cite{hu2022chance, fathabad2023asymptotically, pang2023chance,wei2024enhanced}. All of these consider GMM-distributed uncertainties affecting either individual or two-sided chance constraints. In \cite{hu2022chance,pang2023chance}, methods based on nonlinear optimization techniques, specifically gradient-based and spatial branch-and-bound algorithms, are proposed to solve chance-constrained problems to global optimality. 
The work of~\cite{wei2024enhanced} improves the performance of this spatial branch-and-bound algorithm by proposing an enhanced pruning strategy. In  \cite{fathabad2023asymptotically}, a second-order cone programming (SOCP) model is proposed as a candidate approximation of chance-constrained problems under GMM-distributed uncertainties. Similar to our work, this reformulation also uses a piecewise linear approximation of the standard normal CDF by selecting an optimal number of breakpoints to obtain an SOCP approximation. However, the component-specific covariance matrices in their GMM are assumed to be proportional to the same general covariance matrix: $\bSigma_k = \eta_k \bSigma$ for some fixed $\eta_k > 0$. Also, their resulting SOCP model is optimal only when the risk threshold $\theta$ is sufficiently large; namely, when $\theta \geq 1 - \frac12\min\{w_1, w_2, \ldots, w_K\}$. In contrast, our study considers a general form of the GMM without additional constraints on the component-specific weights, means, or covariances, and focuses on finding provably optimal solutions of linear chance-constrained models without restrictive assumptions on its problem parameters.

\section{Mixed-Integer Optimization Formulations}\label{sec:reform_approx}

We first present a mixed-integer quadratically constrained reformulation 
of the chance-constrained optimization problem~\eqref{eq:ccp} that motivates our subsequent developments. Our key insight is to isolate and separate the nonconvexities in the probability function $p(\bx)$, shown in~\eqref{eq:cc_p_x_definition}, stemming from the fractional term, $(b - \bmu_k^\top \bx)/\sqrt{\bx^\top \bSigma_k \bx}$, and from the standard normal CDF, $\Phi$.
\begin{proposition}\label{prop:reformulation0}
	Problem~\eqref{eq:ccp} has the following equivalent reformulation.
    \vspace{-10pt}
	\begin{equation}\label{eq:ccp_intermediate}
		\begin{aligned}
			\min \; & \,\, \bc^\top \bx \\
			\text{s.t.} \; 
			&  \, \, \sum_{k=1}^K w_k \zeta_k \geq
			\theta, 
            \quad \bx \in \mathcal{X},
            \quad \bz \in \mathbb{R}^K, 
            \quad \bzeta \in [0, 1]^K, 
            \quad \blambda \in \mathbb{R}^K_{+},
			  \\
			& \,\, \left.\begin{aligned}
				& \Phi \left( z_k \right) \geq \zeta_k, \quad  b - \bx^\top \bmu_k \geq z_k \, \lambda_k,  \quad  \bx^\top \bSigma_k \bx = \lambda_k^2 
                \end{aligned}\right\}  \; k \in [K].
		\end{aligned}
	\end{equation}
\end{proposition}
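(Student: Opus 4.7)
The plan is to show equivalence of the two problems by establishing, for every $\bx \in \mathcal{X}$, that $p(\bx) \geq \theta$ if and only if there exist auxiliary vectors $(\bz,\bzeta,\blambda)$ making $(\bx,\bz,\bzeta,\blambda)$ feasible in~\eqref{eq:ccp_intermediate}. Since the objective $\bc^\top \bx$ depends only on $\bx$, equality of the feasible projections onto the $\bx$-space immediately yields equality of optimal values.

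First I would handle the generic case $\bx \neq \mathbf{0}$. By Assumption~\ref{assmp:positive-definiteness}, $\bx^\top \bSigma_k \bx > 0$, so the constraint $\bx^\top \bSigma_k \bx = \lambda_k^2$ combined with $\lambda_k \geq 0$ forces $\lambda_k = \sqrt{\bx^\top \bSigma_k \bx} > 0$. The constraint $b - \bmu_k^\top \bx \geq z_k \lambda_k$ is then equivalent to $z_k \leq (b - \bmu_k^\top \bx)/\lambda_k$, and by monotonicity of $\Phi$ together with $\Phi(z_k) \geq \zeta_k$ we obtain
\[
\zeta_k \;\leq\; \Phi(z_k) \;\leq\; \Phi\!\left(\frac{b - \bmu_k^\top \bx}{\sqrt{\bx^\top \bSigma_k \bx}}\right) \;=\; p_k(\bx).
\]
Summing with the weights $w_k \geq 0$ shows $\sum_k w_k \zeta_k \leq p(\bx)$, so the reformulation constraint $\sum_k w_k \zeta_k \geq \theta$ implies $p(\bx) \geq \theta$. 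Conversely, if $p(\bx) \geq \theta$, I set $\lambda_k := \sqrt{\bx^\top \bSigma_k \bx}$, $z_k := (b - \bmu_k^\top \bx)/\lambda_k$, and $\zeta_k := p_k(\bx) = \Phi(z_k) \in [0,1]$, which satisfies all reformulation constraints with equality.

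Next I would address the degenerate case $\bx = \mathbf{0}$. The equality $\bx^\top \bSigma_k \bx = \lambda_k^2$ forces $\lambda_k = 0$, so the constraint $b - \bmu_k^\top \bx \geq z_k \lambda_k$ reduces to $b \geq 0$. Recall from~\eqref{eq:cc_probability_standardized_form} that $p(\mathbf{0}) = \mathbbm{1}_{\geq 0}(b)$. If $b \geq 0$, then $p(\mathbf{0}) = 1 \geq \theta$ and feasibility of $(\mathbf{0},\bz,\bzeta,\blambda)$ in~\eqref{eq:ccp_intermediate} can be witnessed by taking $\lambda_k = 0$ and any $z_k$ large enough that $\Phi(z_k) \geq \zeta_k = 1$ for all $k$, so $\sum_k w_k \zeta_k = 1 \geq \theta$. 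If $b < 0$, then both formulations correctly reject $\mathbf{0}$: the reformulation because $b \geq z_k \lambda_k = 0$ fails, and the original because $p(\mathbf{0}) = 0 < \theta$.

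The argument is essentially a clean chain of monotonicity together with a careful treatment of the $\bx = \mathbf{0}$ boundary case, where $\lambda_k = 0$ renders the bilinear constraint $b - \bmu_k^\top \bx \geq z_k \lambda_k$ independent of $z_k$. This boundary case is the only subtle point; I do not anticipate any other obstacle, since for $\bx \neq \mathbf{0}$ the positive-definiteness assumption ensures $\lambda_k > 0$ so that the bilinear constraint safely converts into the desired upper bound on $z_k$.
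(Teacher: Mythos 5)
Your proof follows essentially the same route as the paper: split on $\bx = \mathbf{0}$ versus $\bx \neq \mathbf{0}$, use Assumption~\ref{assmp:positive-definiteness} to pin down $\lambda_k = \sqrt{\bx^\top \bSigma_k \bx}$, and run the monotonicity chain $\zeta_k \leq \Phi(z_k) \leq p_k(\bx)$ in one direction and the explicit witness $(\lambda_k, z_k, \zeta_k) = (\sqrt{\bx^\top \bSigma_k \bx},\, (b-\bmu_k^\top\bx)/\lambda_k,\, \Phi(z_k))$ in the other. The one flaw is in your witness for the degenerate case $\bx = \mathbf{0}$, $b \geq 0$: you set $\zeta_k = 1$ and ask for $z_k$ "large enough that $\Phi(z_k) \geq 1$," but $\Phi(z) < 1$ for every finite $z$, so the constraint $\Phi(z_k) \geq \zeta_k$ is unsatisfiable with that choice and the constructed point is infeasible. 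The fix is immediate and is what the paper does: take $\zeta_k = \theta < 1$ and $z_k = \Phi^{-1}(\theta)$, which still gives $\sum_k w_k \zeta_k = \theta$. With that one-line repair your argument is complete and matches the paper's proof.
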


\begin{proof}
    See Appendix~\ref{appndx:proof}.
\end{proof}

Observe that the original non-convexities are captured in three separate sets of constraints for each $k \in [K]$; namely,
\textit{(i)} the constraint, $\Phi \left( z_k \right) \geq \zeta_k$, specified using the standard normal CDF,
\textit{(ii)} the bilinear constraint with the term $z_k\lambda_k$, and
\textit{(iii)} the quadratic equality constraint. 
Note that each of these three sets of constraints is non-convex.
Recent developments in quadratic optimization solvers allow efficient handling of the nonconvex bilinear and quadratic constraints. Therefore,  we only focus on approximating the constraint, $\Phi \left( z_k \right) \geq \zeta_k$, using suitable piecewise linear (PWL) approximations. 
In particular, we exploit the convex-concave structure of $\Phi(z)$, which is convex for $z \in (-\infty, 0]$ and concave for $z \in [0, \infty)$.

\subsection{Piecewise Linear Outer Approximation}\label{sec:PWL-outer}
We now provide an outer approximation  model called `PWL-O' whose optimal objective value provides a lower bound on the optimal objective value of the original chance-constrained problem~\eqref{eq:ccp} and which ``under-satisfies'' the desired chance constraint probability. 
For notational simplicity, we temporarily drop subscript $k$ and focus on approximating $\Phi(z) \geq \zeta$, where $z, \zeta \in \mathbb R$.

Let $\bzz = (\zz_{-L}, \zz_{-L+1}, \ldots, \zz_{-1}, \zz_{0}, \zz_{1}, \ldots, \zz_{R-1}, \zz_{R}) \in \mathbb{R}^{L+R+1}$ denote a valid array of breakpoints, parameterized by integers $L > 0$ and $R > 0$ with
\vspace{-10pt}
\begin{equation}
  -\infty < \zz_{-L} < \zz_{-L+1} < \ldots  < \zz_{-1} < \zz_{0} = 0 < \zz_{1} < \ldots < \zz_{R-1} < \zz_{R} < \infty.   \label{eqn:ValidArrayBreakPoints}
  \vspace{-5pt}
\end{equation}
The proposed piecewise linear outer approximation uses $L$ and $R+1$ linear pieces on the non-positive (left) and non-negative (right) sides of the domain of $\Phi$, respectively. We allow $L \neq R$; as we shall later discuss, the approximation of $\Phi$ on the convex and concave sides of $\Phi$ allows identification of a different number of breakpoints to achieve the same accuracy.  We define the following quantities: 
\vspace{-10pt}
\begin{equation*}
\begin{alignedat}{3}
	& g_i \coloneqq \phi(\zz_i), \; && g_i^0 = \Phi(\zz_i) - \phi(\zz_i) \zz_i, \; && i \in [R]_0, \\
	& h_i \coloneqq \frac{\Phi\left(\zz_{-i+1}\right) - \Phi\left(\zz_{-i}\right)}{\zz_{-i+1} - \zz_{-i}}, \; && h_i^0 = \Phi(\zz_{-i}) - \frac{\Phi\left(\zz_{-i+1}\right) - \Phi\left(\zz_{-i}\right)}{\zz_{-i+1} - \zz_{-i}} \zz_{-i}, \; && i \in [L],
\end{alignedat}
\end{equation*}
where $g$ represents approximation of $\Phi$ on the non-negative reals and $h$ represents its approximation on the negative reals. Given the array of breakpoints $\bzz$, we define the PWL outer approximation of $\Phi$ as follows:
\vspace{-10pt}
\begin{equation}\label{def:def_phi_pwl_outer}
\bar{\Phi}(z; \bzz) \coloneqq
\begin{dcases*}
    \min\left\{1, \min_{i \in [R]_0} \left\{ g_i z + g_i^0 \right\} \right\},
    & if $z \geq 0$, \;\;\; \text{(Tangent)} \\
    \max\left\{\Phi(\zz_{-L}), \max_{i \in [L]} \left\{ h_i z + h_i^0 \right\} \right\}, 
    & otherwise. \; \text{(Secant)}
\end{dcases*}
\end{equation}
\begin{remark}
    Note that \eqref{def:def_phi_pwl_outer} ensures correctness of the outer approximation of $\bar{\Phi}(z, \zz)$ for $z \geq \zz_{R}$ and $z \leq \zz_{-L}$, respectively, by using $ \min\left\{1, \min_{i \in [R]_0} \left\{ g_i z + g_i^0 \right\} \right\}$ and $\max\left\{\Phi(\zz_{-L}), \max_{i \in [L]} \left\{ h_i z + h_i^0 \right\} \right\}$ to approximate the tails of the Gaussian distribution. In particular, $\bar{\Phi}(z, \zz)$ equals $\Phi(\zz_{-L})$ for any $z \leq \zz_{-L}$, and $\bar{\Phi}(z, \zz)$ equals $1$ for $z \geq \zz_{R}$. As we shall show, this allows us to bound the outer approximation error even when $z$ is outside $[\zz_{-L}, \zz_{R}]$. 
\end{remark}

By construction, the graph of $\bar{\Phi}(\cdot; \bzz)$, is above the graph of $\Phi$, as shown in Figure~\ref{fig:pwl_phi_outer}.
This is because on the non-negative real line $(z \geq 0)$, it uses a tangent-based PWL approximation, whereas on the negative real line $(z < 0)$, it uses a secant-based PWL approximation. 
\begin{figure}[h]
\centering
\includegraphics[width=0.5\textwidth]{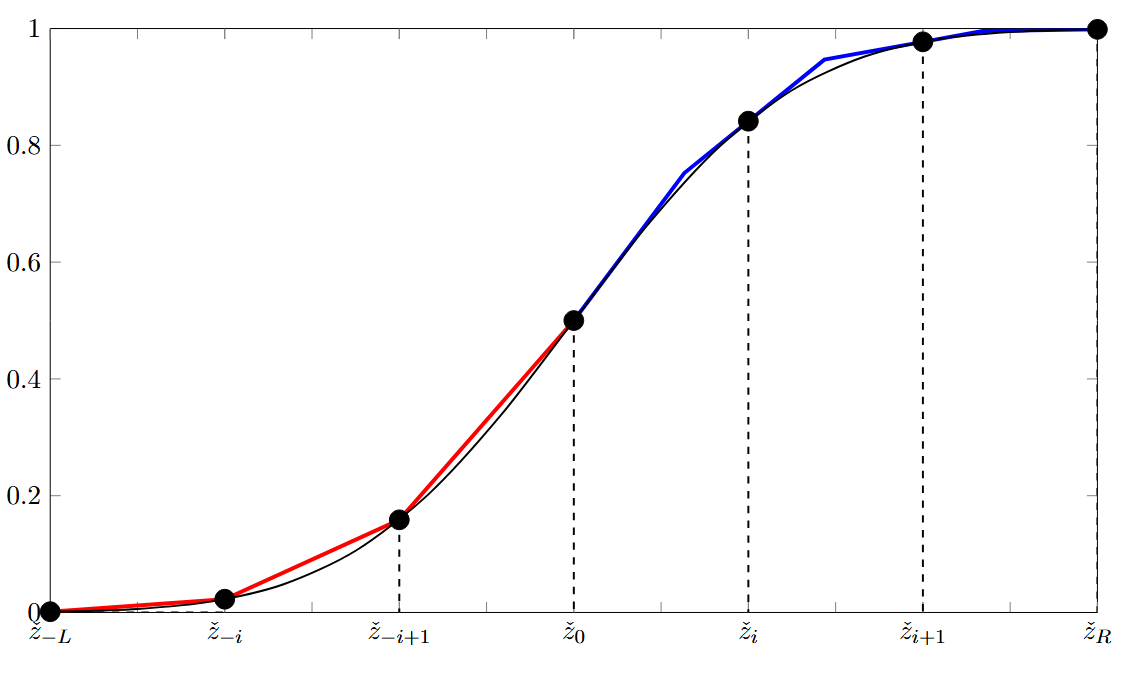}
\caption{\centering Piecewise linear outer approximation of $\Phi$.}
\label{fig:pwl_phi_outer}
\vspace{-1em}
\end{figure}

Similar to $\Phi(z)$, the PWL approximation in \eqref{def:def_phi_pwl_outer} is also convex on the domain $z \leq 0$, and concave when $z \geq 0$. On the concave part of the domain, constraint $\bar{\Phi}(z; \bzz) \geq \zeta$ can be represented using linear constraints, whereas on the convex part it is reformulated using Type-2 Special Ordered Set (SOS2) constraints. This is formalized in the following proposition.

\begin{proposition}\label{prop:phi_pwl_outer_reformulation}
	If $\bzz$ is any valid array of breakpoints as in \eqref{eqn:ValidArrayBreakPoints} and $\ubar{z}, \bar{z} \in \mathbb R$ are given numbers satisfying $\ubar{z} \leq \zz_{-L} < \zz_{R} \leq \bar{z}$, then there exist $(z, \zeta) \in [\ubar{z}, \bar{z}] \times \mathbb R$ satisfying $\bar{\Phi} \left(z ; \bzz\right) \geq \zeta$ if and only if $\mH(\bzz) \neq \emptyset$, where
    \vspace{-20pt}
    	\begin{equation}\label{eq:reform-outer-sub}
		\mH(\bzz) \coloneqq 
		\Set*{
			\begin{aligned}
				&\balpha \in \mathbb{R}_{+}^{L}, \\
				& \bt \in \{0, 1\}^3, \\
				& \by \in \mathbb{R}^3, \\
				& z, \zeta \in \mathbb{R}
			\end{aligned}
		}
		{
			\begin{aligned}
				& \Phi(\zz_{-L})t_1 + t_2 + g_i y_3 + g_i^0 t_3  \geq \zeta \quad\forall i \in [R]_0, \\
				& \Phi(\zz_{-L})t_1 + \sum_{i=0}^{L} \alpha_{i} \Phi(\zz_{-i})  + t_3 \geq \zeta, \\
				& t_1 + t_2 + t_3 = 1, \; z = y_1 + y_2 + y_3, \\
				& - t_1 \ubar{z} \leq y_1 \leq t_1 \zz_{-L}, \quad 0 \leq y_3 \leq t_3 \bar{z},\\
				& y_2 = \sum_{i=0}^{L} \alpha_{i} \zz_{-i}, \;\; t_2 = \sum_{i=0}^{L} \alpha_{i}, \;\;  \bm{\alpha} \in \mathop{\mathrm{SOS2}}
			\end{aligned}
		}.
	\end{equation}
\end{proposition}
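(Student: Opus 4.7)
The plan is to prove the proposition by a disjunctive case analysis driven by the three regimes of $\bar{\Phi}(\,\cdot\,;\bzz)$: the left-tail constant region $[\ubar{z},\zz_{-L}]$, the convex secant-based region $[\zz_{-L},0]$, and the concave tangent-based region $[0,\bar{z}]$. The key observation is that the formulation $\mH(\bzz)$ encodes exactly this disjunction: the binary triple $\bt$ with $t_1+t_2+t_3=1$ selects the active piece, the bound constraints on $(y_1,y_2,y_3)$ together with $z=y_1+y_2+y_3$ localize $z$ inside the selected piece, and the SOS2 vector $\balpha$ encodes the convex combination used for the secant piece.

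For the forward direction, given $(z,\zeta)\in[\ubar{z},\bar{z}]\times\mathbb{R}$ with $\bar{\Phi}(z;\bzz)\geq\zeta$, I would construct an explicit element of $\mH(\bzz)$ according to the location of $z$. If $z\leq\zz_{-L}$, set $t_1=1$, $y_1=z$, and zero out every other variable; both constraint families then reduce to $\Phi(\zz_{-L})\geq\zeta$, which is exactly $\bar{\Phi}(z;\bzz)\geq\zeta$ on this piece. If $\zz_{-L}\leq z\leq 0$, identify the unique sub-interval $[\zz_{-i-1},\zz_{-i}]$ containing $z$, set $t_2=1$, and choose $\alpha_i,\alpha_{i+1}\geq 0$ with $\alpha_i+\alpha_{i+1}=1$ and $\alpha_i\zz_{-i}+\alpha_{i+1}\zz_{-i-1}=z$ (all other $\alpha_j=0$, so SOS2 holds); then $\sum_j\alpha_j\Phi(\zz_{-j})$ equals the secant value of $\Phi$ at $z$ on that sub-interval, which coincides with $\bar{\Phi}(z;\bzz)$. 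Finally, if $0\leq z\leq\bar{z}$, set $t_3=1$, $y_3=z$; the tangent constraints $g_iy_3+g_i^0\geq\zeta$ hold because each tangent line majorizes $\Phi$ (and hence $\bar{\Phi}$) on $[0,\infty)$, and the second constraint collapses to $1\geq\zeta$, which is implied by $\bar{\Phi}(z;\bzz)\leq 1\leq \ldots$ wait, by $\zeta\leq\bar{\Phi}(z;\bzz)\leq 1$.

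For the reverse direction, the binary restriction $t_i\in\{0,1\}$ and $t_1+t_2+t_3=1$ force exactly one index to be active. The inactive components are pinned to zero: $t_1=0$ forces $y_1=0$ via $-t_1\ubar{z}\leq y_1\leq t_1\zz_{-L}$, $t_3=0$ forces $y_3=0$ via $0\leq y_3\leq t_3\bar{z}$, and $t_2=\sum_i\alpha_i=0$ combined with $\balpha\geq\zeros$ yields $\balpha=\zeros$ and hence $y_2=0$. A case check then recovers $\bar{\Phi}(z;\bzz)\geq\zeta$: the case $t_1=1$ reads off $\bar{\Phi}(z;\bzz)=\Phi(\zz_{-L})\geq\zeta$; the case $t_2=1$ uses SOS2 plus $z=y_2=\sum_j\alpha_j\zz_{-j}$ to conclude the interpolated value equals the secant of $\Phi$ at $z$, which is $\bar{\Phi}(z;\bzz)$; the case $t_3=1$ combines the tangent family with the second constraint to obtain $\zeta\leq\min\{1,\min_i g_iz+g_i^0\}=\bar{\Phi}(z;\bzz)$.

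The main obstacle I expect is the SOS2 step in the reverse direction on the convex region. I need the convexity fact that, for $\Phi$ restricted to $(-\infty,0]$, the secant joining two adjacent breakpoints bracketing $z$ is the pointwise \emph{maximum} among all pairwise secants at $z$, so that the interpolation $\sum_j\alpha_j\Phi(\zz_{-j})$ produced by a consecutive-support SOS2 vector coincides with $\max_{i\in[L]}\{h_iz+h_i^0\}=\bar{\Phi}(z;\bzz)$. Once this is established, along with the trivial observation that extrapolation by a non-bracketing secant falls below $\Phi$ on the convex side, the rest of the argument is bookkeeping: substitution into the constraints, use of $t_1+t_2+t_3=1$, and verifying the box bounds on $y_1,y_3$.
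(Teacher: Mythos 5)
Your proposal is correct and follows essentially the same route as the paper's proof: a three-regime disjunction selected by $\bt$, explicit variable assignments for the forward direction, and a case check on the active $t_i$ for the reverse direction. One small note: the convexity fact you flag as the main obstacle is actually needed in the \emph{forward} direction (to guarantee that the bracketing secant's value $\sum_j \alpha_j \Phi(\zz_{-j})$ dominates $\bar{\Phi}(z;\bzz) \geq \zeta$), whereas in the reverse direction the trivial bound $\max_{i \in [L]}\{h_i z + h_i^0\} \geq h_{\check{i}} z + h_{\check{i}}^0 = \sum_j \alpha_j \Phi(\zz_{-j}) \geq \zeta$ already suffices.
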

\begin{proof}
    See Appendix\ref{appndx:proof}.
\end{proof}
In formulation~\eqref{eq:reform-outer-sub}, the binary variables, $t_1$, $t_2$, and $t_3$, are used to indicate if $z \in [\ubar{z}, \zz_{-L}]$, $z \in (\zz_{-L}, 0)$, and $z \in [0, \bar{z}]$, respectively.
The nonnegative variables $\balpha$ can be nonzero only when $z \in (\zz_{-L}, 0)$.
The SOS2 constraint ensures that at most two components of $\balpha$ are positive, and that if two components are indeed positive, then these component indices are consecutive integers in the set $[L]_0$.
Several modern mixed-integer optimization solvers enforce SOS2 constraints algorithmically during branching, as opposed to reformulating them as explicit constraints.

An application of Proposition~\ref{prop:phi_pwl_outer_reformulation} to each of the $K$ constraints, $\bar{\Phi}(z_k) \geq \zeta_k$, yields the following MIQP approximation of the reformulated problem~\eqref{eq:ccp_intermediate}, and hence, of the original chance constrained problem~\eqref{eq:ccp}:
\begin{equation}\label{eq:ccp_pwl-outer}
	\begin{aligned}
		\mathop{\text{min}} \; & \, \, \bc^\top \bx \\
		\text{s.t.} \; 
		& \, \,  \sum_{k=1}^K w_k \zeta_k
		\geq
		\theta, \quad \bx \in \mathcal{X}, \quad   \bz \in \mathbb{R}^K, \quad \bzeta \in [0, 1]^K, \quad \blambda \in \mathbb{R}^K_{+} \\
		& \, \, \left.\begin{aligned}
			& (\bm{\alpha}_k,  \bt_k, \by_k, z_k, \zeta_k) \in \mH(\bzz), \quad  b - \bx^\top \bmu_k \geq z_k \lambda_k, \quad  \bx^\top \bSigma_k \bx = \lambda_k^2 \ \\
		\end{aligned}\right\} \; k \in [K]. 
	\end{aligned}
\end{equation}

The quality of the above PWL-O model depends on the accuracy of $\bar{\Phi}$, which in turn depends on the vector of breakpoints $\bzz$.
The following lemma, inspired from \cite[Appendix~C.1]{Luo2021DecompositionWasserstein}, shows that $\bar{\Phi}$ is a $\tau$-accurate outer-approximation of $\Phi$ whenever the discretization of the breakpoint array, $\bzz$, is small in regions where the magnitude of the second derivative of $\Phi$ is large and when $\zz_{-L}$ and $\zz_{R}$ are sufficiently far from the origin.

\begin{lemma}\label{lem: epsilon-reltn-z-phi-outer}
For any $z_1, z_2 \in \mathbb R$,  let $C(z_1, z_2) \coloneqq \max_{z \in [z_1, z_2]} |\Phi''(z)|$, $\bzz$ be any valid array of breakpoints as in \eqref{eqn:ValidArrayBreakPoints}, and $\tau >0$. Then the following holds:
\begin{itemize}
    \item[a)] If $\zz_{i} - \zz_{i-1} \leq \sqrt{\frac{2 \tau}{C(\zz_{i-1}, \zz_{i})}}$ for all $i \in [R]$, then $0 \leq \bar{\Phi}(z; \bzz) - \Phi(z) \leq \tau$ for all $z \geq 0$.

    \item[b)] If $\zz_{-i+1} - \zz_{-i} \leq 2 \sqrt{\frac{2 \tau}{C(\zz_{-i}, \zz_{-i+1})}}$ for all $i \in [L]$, then $0 \leq \bar{\Phi}(z; \bzz) - \Phi(z) \leq \tau $ for all $z < 0$.
\end{itemize}
\end{lemma}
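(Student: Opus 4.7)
The plan is to derive both parts from classical interpolation-error estimates for a twice continuously differentiable function, specialized to the convex-concave structure of $\Phi$. The factor-of-two gap between the two hypotheses will reflect the standard $(b-a)^2/2$ constant for tangent-line error versus the sharper $(b-a)^2/8$ constant for chord (secant) error applied to the sup-norm of $|\Phi''|$ on the relevant subinterval.

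For part (a), I would first observe that $\bar{\Phi}(z;\bzz) \geq \Phi(z)$ on $[0,\infty)$: $\Phi$ is concave there, so each tangent $z \mapsto g_i z + g_i^0$ lies weakly above $\Phi$ on $[0,\infty)$, and so does the constant $1$; their pointwise minimum therefore also dominates $\Phi$. For the matching upper bound, fix $z \in [\zz_{i-1}, \zz_i]$ with $i \in [R]$ and use the trivial dominance $\bar{\Phi}(z;\bzz) \leq g_{i-1} z + g_{i-1}^0 = \Phi(\zz_{i-1}) + \phi(\zz_{i-1})(z - \zz_{i-1})$. A second-order Taylor expansion of $\Phi$ about $\zz_{i-1}$ yields $\xi \in (\zz_{i-1}, z)$ with
\[
\bar{\Phi}(z;\bzz) - \Phi(z) \;\leq\; -\tfrac{1}{2}\Phi''(\xi)(z - \zz_{i-1})^2 \;\leq\; \tfrac{C(\zz_{i-1},\zz_i)}{2}(\zz_i - \zz_{i-1})^2 \;\leq\; \tau,
\]
the last inequality being exactly the hypothesis. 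For $z > \zz_R$, the clipping by the constant $1$ gives $\bar{\Phi}(z;\bzz) - \Phi(z) \leq 1 - \Phi(\zz_R)$, and I would argue this is also at most $\tau$ by combining the hypothesis at $i = R$ with the super-exponential tail decay of $\phi$.

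For part (b), the convexity of $\Phi$ on $(-\infty, 0]$ places every secant $h_i z + h_i^0$ weakly above $\Phi$ on its supporting subinterval $[\zz_{-i}, \zz_{-i+1}]$ and weakly below it elsewhere. Together with $\Phi(\zz_{-L}) \leq \Phi(z)$ for $z \in [\zz_{-L}, 0)$, a short case analysis shows that the maximum in \eqref{def:def_phi_pwl_outer} at $z \in [\zz_{-i},\zz_{-i+1}]$ is attained exactly by the secant $h_i z + h_i^0$. The classical chord-interpolation error estimate
\[
\max_{z\in[a,b]} |s(z) - f(z)| \;\leq\; \tfrac{1}{8}(b-a)^2 \max_{z\in[a,b]} |f''(z)|,
\]
applied to $f = \Phi$ on $[\zz_{-i}, \zz_{-i+1}]$ with the hypothesis $\zz_{-i+1} - \zz_{-i} \leq 2\sqrt{2\tau/C(\zz_{-i},\zz_{-i+1})}$, then yields the desired bound $\bar{\Phi}(z;\bzz) - \Phi(z) \leq \tau$. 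The tail $z < \zz_{-L}$, where $\bar{\Phi}(z;\bzz) = \Phi(\zz_{-L})$, is handled in the same spirit as part (a) by invoking the hypothesis at $i = L$ together with the left-tail decay of $\Phi$.

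The step I expect to demand the most care is the tail analysis: bounding $1 - \Phi(\zz_R)$ in part (a) and $\Phi(\zz_{-L})$ in part (b). Whereas the interior estimate follows immediately from a local Taylor argument on a compact subinterval, the tail requires a separate argument that combines the local hypothesis at the extreme breakpoint with the Gaussian decay of $\phi$ to rule out a larger error in the unbounded region. Verifying the $1/8$ chord-error constant for part (b) is standard but also deserves care: one shows that $s - \Phi$ has a unique stationary point in the open interval and that the Taylor remainder at that point is bounded by $(b-a)^2/8$ times $\sup|\Phi''|$.
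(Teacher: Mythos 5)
Your interior estimates are correct and follow essentially the same route as the paper: on $[\zz_{i-1},\zz_i]$ with $z\ge 0$ you compare against the tangent at $\zz_{i-1}$ via a second-order Taylor remainder, getting error at most $\tfrac12 C(\zz_{i-1},\zz_i)(\zz_i-\zz_{i-1})^2\le\tau$, and on $[\zz_{-i},\zz_{-i+1}]$ you invoke the $\tfrac18(b-a)^2\sup|f''|$ chord-interpolation bound, which is exactly what the paper derives by hand through three applications of the mean value theorem and the inequality $(z_2-\hat z)(\hat z-z_1)\le\tfrac14(z_2-z_1)^2$. Your sign arguments (tangents above a concave function, chords above a convex function on their own subinterval and below it elsewhere, so the max in the secant branch is attained by the local chord) are also sound and implicit in the paper's treatment.

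The genuine gap is in your tail argument. You claim that $1-\Phi(\zz_R)\le\tau$ (and symmetrically $\Phi(\zz_{-L})\le\tau$) can be obtained "by combining the hypothesis at $i=R$ with the super-exponential tail decay of $\phi$." It cannot: the hypothesis at $i=R$ only constrains the \emph{spacing} $\zz_R-\zz_{R-1}$, not the \emph{location} of $\zz_R$. For example, with $R=1$ and $\zz_1=0.01$ the spacing condition can be met for reasonable $\tau$, yet $1-\Phi(0.01)\approx 0.496$, so for $z\ge\zz_R$ where $\bar{\Phi}$ is clipped to $1$ the error $1-\Phi(z)$ can be as large as $1-\Phi(\zz_R)\gg\tau$. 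The correct resolution is that the tail bound requires the \emph{separate} assumptions $1-\Phi(\zz_R)\le\tau$ and $\Phi(\zz_{-L})\le\tau$, which the paper imposes only later, in Theorem~\ref{thm:complexity-breakpoint}; the paper's own proof of the lemma silently restricts attention to $z$ lying inside a breakpoint interval and never treats $z>\zz_R$ or $z<\zz_{-L}$ at all. So you were right to flag the tails as the delicate step, but the fix you sketch does not work; you should instead either add the endpoint conditions to the lemma's hypotheses or restrict the conclusion to $z\in[\zz_{-L},\zz_R]$.
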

\begin{proof}
    See Appendix\ref{appndx:proof}.
\end{proof}

Lemma~\ref{lem: epsilon-reltn-z-phi-outer} generalizes the approximation conditions established in~\cite{fathabad2023asymptotically, Luo2021DecompositionWasserstein}. It shows that we only need the discretization interval to be small in regions where $C(\zz_{i-1}, \zz_{i})$ is large.
This is particularly important to control the number of breakpoints since  $|\Phi''(z)|$ is small for large values of $z$ (see Figure~\ref{fig:Breaks points and Curvature change in Standard Normal CDF}). The number of breakpoints differs in (a) and (b) by a factor of 2 because, in comparison to the secant-based approximation, the tangent-based approximation has lower approximation error estimates. 
\begin{figure}[!htbp]
\center
\includegraphics[width=0.55\textwidth, height=0.35\linewidth]{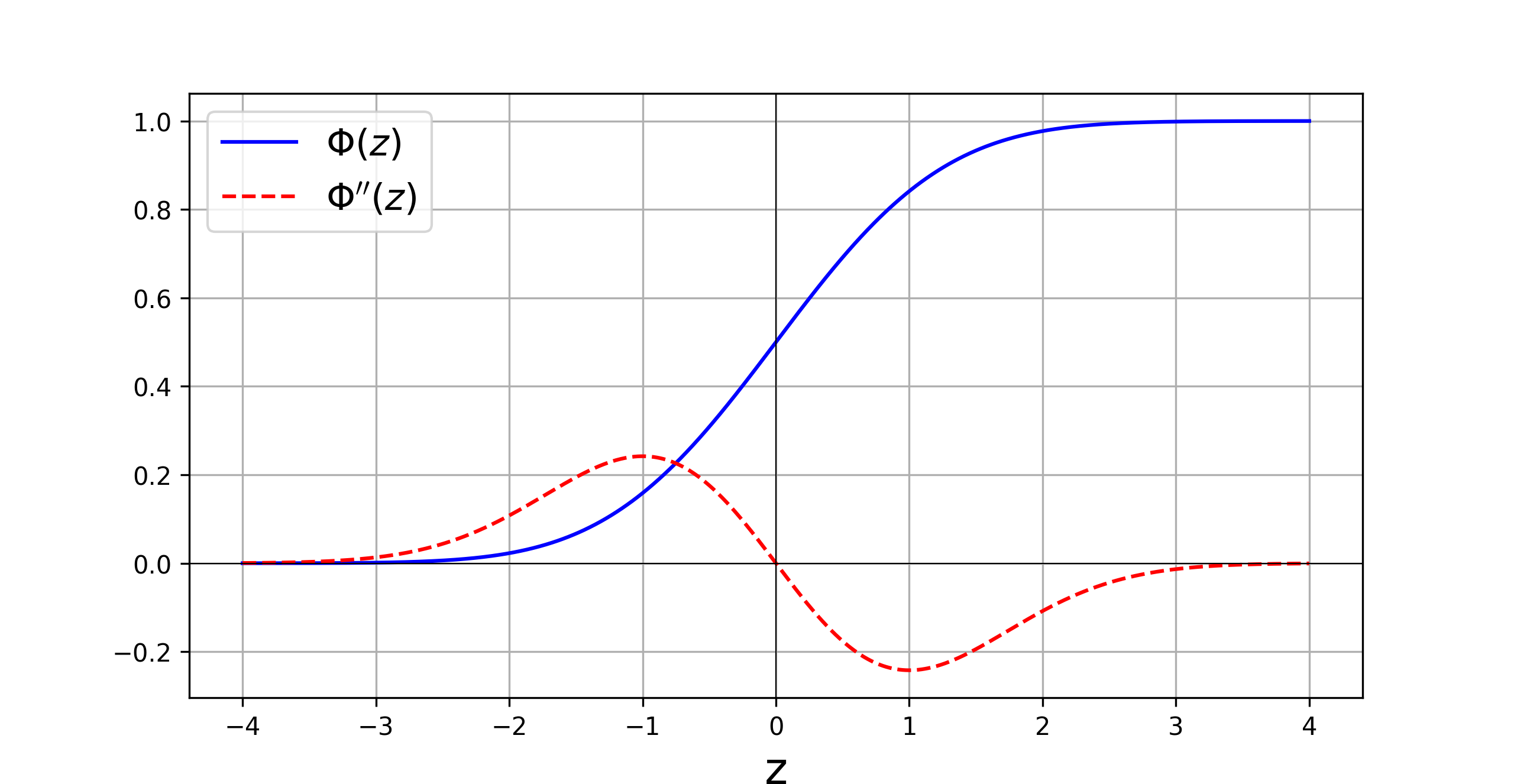}
\caption{\centering CDF of standard Normal distribution and its second derivative} \label{fig:Breaks points and Curvature change in Standard Normal CDF}
\end{figure}

\vspace{-5pt}
\begin{table}[h]
\small
  \centering
  \captionsetup{type=algorithm} 
  \caption{Breakpoints for approximating $\Phi(z)$}
  \label{tab:phi-breakpoints}

  \begin{subtable}[t]{0.48\linewidth}
  \vspace{-10pt}
    \centering
    \caption{Tangent approximation of $\Phi(z), z \ge 0$}
    \label{alg:breakpoint_find}
    \begin{algorithmic}[1]
      \Require{$\zz_{R}, \tau$}
      \Ensure{$\mathcal{A}^R$}

      \State \textbf{Part 1:} $0\leq z \leq 1$
      \State $i \gets 1, \check{z}_{i} \gets 1, \mathcal{A}^{R} \gets \mathcal{A}^{R} \cup \{\check{z}_{i}\}$
      \State $ \check{z}_{i+1} =  \check{z}_i - \sqrt{\frac{2\tau}{\frac{e^{-0.5}}{\sqrt{2 \pi}}}}$
      \While{$\check{z}_{i+1} > 0$}
      \vspace{5pt}
      \State $\mathcal{A}^{R} \gets \mathcal{A}^{R} \cup \{\check{z}_{i+1}\}, i \gets i + 1$
      \State $\check{z}_{i+1} = \check{z}_{i} -  \sqrt{\frac{2\tau}{\phi(\check{z}_{i})\check{z}_{i}}}$
      \EndWhile
      \State $\mathcal{A}^L \gets \mathcal{A}^{R} \cup \{0\}, i \gets i+1$
      \State Reverse the order of $\mathcal{A}^{R}$
      \State $\mathcal{A}^{R} \gets \mathcal{A}^{R} \backslash \{1\}$
      \State \textbf{Part 2:} $1\leq z \leq \zz_{R}$
      \State $i \gets i + 1, \check{z}_{i-1} \gets 1, \mathcal{A}^{R} \gets \mathcal{A}^{R} \cup \{\check{z}_{i-1}\}$
      \State $\check{z}_i = \check{z}_{i-1} + \sqrt{\frac{2\tau}{\phi(\check{z}_{i-1})
      \check{z}_{i-1}}}$
      \While{$\check{z}_i < \zz_{R}$}
      \vspace{5pt}
      \State $\mathcal{A}^{R} \gets \mathcal{A}^{R} \cup \{\check{z}_i\}, i \gets i + 1$
      \State $\check{z}_i = \check{z}_{i-1} + \sqrt{\frac{2\tau}{\phi(\check{z}_{i-1})
      \check{z}_{i-1}}}$
      \EndWhile
      \State $\mathcal{A}^{R} \gets \mathcal{A}^{R} \cup \{\zz_{R}\}$
    \end{algorithmic}
  \end{subtable}\hfill
  \begin{subtable}[t]{0.48\linewidth}
    \vspace{-10pt}
    \centering
    \caption{Secant approximation of $\Phi(z)$, $z < 0$}
    \label{alg:breakpoint_find_negative}
    \begin{algorithmic}[1]
      \Require{$\zz_{-L}, \tau$}
      \Ensure{$\mathcal{A}^{L}$}

      \State \textbf{Part 1:} $-1\leq z \leq 0$
      \State $i \gets 1, \check{z}_{-i} \gets -1, \mathcal{A}^{L} \gets \mathcal{A}^{L} \cup \{\check{z}_{-i}\}$
      \State $ \check{z}_{-i-1} =  \check{z}_{-i} - 2\sqrt{\frac{2\tau}{\phi(\check{z}_{-i})
      \check{z}_{-i}}}$
      \While{$\check{z}_{-i-1} < 0$}
      \vspace{5pt}
      \State $\mathcal{A}^{L} \gets \mathcal{A}^{L} \cup \{\check{z}_{-i-1}\}, i \gets -i - 1$
      \State $\check{z}_{-i-1} = \check{z}_{-i} - 2\sqrt{\frac{2\tau}{\phi(\check{z}_{-i})
      \check{z}_{-i}}}$
      \EndWhile
      \State Reverse the order of $\mathcal{A}^{L}$
      \State $\mathcal{A}^{L} \gets \mathcal{A}^{L} \backslash {-1}$
      \State \textbf{Part 2:} $\zz_{-L} \leq z \leq -1$
      \State $i \gets i + 1, \check{z}_{-i+1} \gets 1$,
      
       $\mathcal{A}^{L} \gets \mathcal{A}^{L} \cup \{\check{z}_{-i+1}\}$
      \State $\check{z}_{-i} = \check{z}_{-i+1} + 2 \sqrt{\frac{2\tau}{\phi(\check{z}_{-i+1})
      \check{z}_{-i+1}}}$
      \While{$\check{z}_{-i} > \zz_{-L}$}
      \vspace{5pt}
      \State $\mathcal{A}^{L} \gets \mathcal{A}^{L} \cup \{\check{z}_{-i}\}, i \gets i + 1$
      \State $\check{z}_{-i} = \check{z}_{-i+1} + 2 \sqrt{\frac{2\tau}{\phi(\check{z}_{-i+1})
      \check{z}_{-i+1}}}$
      \EndWhile
      \State $\mathcal{A}^{L} \gets \mathcal{A}^{L} \cup \{\zz_{-L}\}$
    \end{algorithmic}
  \end{subtable}
\end{table}

We now discuss the calculation of the constant $C(\zz_{i-1}, \zz_{i})$ appearing in Lemma~\ref{lem: epsilon-reltn-z-phi-outer}. First note that $\Phi''(z) = -\phi(z) \, z$, and the maximum and minimum of $\Phi''(z)$ is attained at $\pm 1$ with the value $\pm \frac{e^{-0.5}}{\sqrt{2 \pi}}$. Thus,  $|\Phi''(z)| \leq \frac{e^{-0.5}}{\sqrt{2 \pi}}, \forall z \in \mathbb{R}$. A worst-case bound on the number of breakpoints required to achieve a $\tau$-approximation in Lemma~\ref{lem: epsilon-reltn-z-phi-outer} can be obtained based on this absolute constant. However, as we note from Figure~\ref{fig:Breaks points and Curvature change in Standard Normal CDF}, $\Phi''(z)$ varies significantly on its domain, and it exhibits monotonicity on parts of its domain. A significant reduction in the number of breakpoints is possible when we take advantage of the monotonically changing values of $\Phi''(\cdot)$. These breakpoints are computed \emph{a priori} based on the value of $\tau$ and varying $\Phi''(z)$. We now show how to achieve this.

Observe that $\Phi''(\cdot)$ is monotonically increasing in the intervals $[\zz_L, -1]$ and $[1, \zz_R]$, and it is monotonically decreasing in the interval $[-1,1]$. For $z \geq 0$, we first divide the values of $z$ in intervals $[0,1]$ (Part 1), $[1, \zz_R]$ (Part 2)  {(see lines 1-10 and 11-18 of Algorithm~\ref{alg:breakpoint_find} respectively for Part 1 and Part 2). For the interval $[0,1]$, we determine the breakpoints as a decreasing sequence starting at $z=1$. We use the value of $\Phi''$ at the current point $\zz_{i}$ for $C(\zz_{i+1}, \zz_{i})$ that determines the next point $\zz_{i+1}$. At the end of Part 1, we reverse the order of the breakpoints calculations (note that array $\mathcal{A}^{R}$ maintains the breakpoints in increasing order (line 9)). $z=1$ serves as the starting point for Part 2. Starting from $z=1$, for the interval $[1,\zz_R]$ we determine the breakpoints as an increasing sequence. Once again, we use the value of $\Phi''$ at the current point $\zz_{i-1}$ for $C(\zz_{i-1},\zz_i)$ to determine the next point $\zz_i$. Similar to a tangent-based approximation, Algorithm~\ref{alg:breakpoint_find} outlines a secant-based approximation for $z < 0$ in two parts. Here, both parts start from $z = -1$ and add breakpoints at twice the interval used for tangent-based approximation and constitute an ordered sequence of breakpoints $\mathcal{A}^{L}$. Combining these two sequences, we achieve $\mathcal{A} \gets \mathcal{A}^{L} \cup \mathcal{A}^{R}$.

Theorem~\ref{thm:complexity-breakpoint} bounds the worst-case number of breakpoints required by Algorithms~\ref{alg:breakpoint_find}-\ref{alg:breakpoint_find_negative} to achieve a $\tau$ error in the resulting PWL approximation.
\begin{theorem}\label{thm:complexity-breakpoint}
Let $\zz_{-L}, \zz_{R}$  be such that $\max\big\{1 - \Phi(\zz_{R}), \Phi(\zz_{-L})\big\} \leq \tau$, where $0 <\tau \ll \min\{ |\zz_{-L}|, \,  \zz_{R}\}$. Then $O(\sqrt{\frac{1}{\tau}\log(\frac{1}{\tau})})$ breakpoints are sufficient ($|\mathcal{A}|$ in Algorithms~\ref{alg:breakpoint_find}-\ref{alg:breakpoint_find_negative}) to ensure $0 \leq \bar{\Phi}(z; \bzz) - \Phi(z) \leq \tau$, $\forall z \in \mathbb{R}.$ 
\end{theorem}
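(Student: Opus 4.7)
The plan is to count the breakpoints produced by Algorithms~\ref{alg:breakpoint_find} and \ref{alg:breakpoint_find_negative} separately on the four sub-intervals $[\zz_{-L}, -1]$, $[-1, 0]$, $[0, 1]$, and $[1, \zz_{R}]$, and sum the counts. Because the secant-based construction on the negative side differs from the tangent-based construction only by a factor of two in the step size, it suffices to analyze the positive sub-intervals $[0,1]$ and $[1,\zz_{R}]$ in detail. The first step is to note that the natural (tightest) choice of $\zz_{R}$ satisfying $1-\Phi(\zz_{R})\le \tau$ gives $\zz_{R} = \Theta(\sqrt{\log(1/\tau)})$, which follows from the standard Mills' estimate $1-\Phi(z)\le \phi(z)/z$; an analogous calculation yields $|\zz_{-L}| = \Theta(\sqrt{\log(1/\tau)})$. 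Larger choices of $|\zz_{\pm}|$ do not improve the outer approximation beyond the endpoints (the approximation is already exact-modulo-$\tau$ there), so this is the operative case.

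The second step verifies that the algorithms' pointwise step size is a valid choice in Lemma~\ref{lem: epsilon-reltn-z-phi-outer}. Since $|\Phi''(z)| = \phi(z)|z|$ is unimodal with maximum at $|z|=1$, it is strictly increasing on $[0,1]$ and strictly decreasing on $[1,\infty)$, so $C(\zz_{i-1},\zz_{i})$ equals $\phi(\zz_{i})\zz_{i}$ on sub-intervals of $[0,1]$ (maximum at the right endpoint) and $\phi(\zz_{i-1})\zz_{i-1}$ on sub-intervals of $[1,\zz_{R}]$ (maximum at the left endpoint). A direct check of the step updates in Algorithm~\ref{alg:breakpoint_find} shows that each iteration uses precisely these values, so the hypothesis of Lemma~\ref{lem: epsilon-reltn-z-phi-outer}(a) holds at every iteration; the negative side follows analogously from Algorithm~\ref{alg:breakpoint_find_negative} and Lemma~\ref{lem: epsilon-reltn-z-phi-outer}(b), with the factor of two coming from the secant discretization.

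The final step is a uniform lower bound on the step size: because $\phi(z)|z| \le \phi(1) = (2\pi e)^{-1/2}$ for every $z\in\mathbb{R}$, each step produced by either algorithm has length at least $\sqrt{2\tau\sqrt{2\pi e}} = \Omega(\sqrt{\tau})$. Dividing the length of each sub-interval by this bound yields $O(1/\sqrt{\tau})$ breakpoints on $[-1,0]$ and $[0,1]$, and $O(\zz_{R}/\sqrt{\tau}) = O(\sqrt{\log(1/\tau)/\tau})$ on each of $[\zz_{-L},-1]$ and $[1,\zz_{R}]$; since $\log(1/\tau)\ge 1$ for small $\tau$, summing the four contributions yields $|\mathcal{A}| = O(\sqrt{\log(1/\tau)/\tau})$. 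The main obstacle I anticipate is the correctness step—carefully justifying that the single pointwise value of $\phi(\zz_{i-1})\zz_{i-1}$ (or $\phi(\zz_{i})\zz_{i}$) used by each iteration really dominates $\max_{z\in[\zz_{i-1},\zz_{i}]}|\Phi''(z)|$—which reduces to tracking the monotonicity of $\phi(z)|z|$ piece by piece across the four sub-intervals and handling the boundary points $z=\pm 1$ as transition points between the two monotonicity regimes.
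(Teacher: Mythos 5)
Your proposal is correct, and it reaches the stated bound by a route that differs from the paper's in the one place where the argument has real content: the count on the tails $[1,\zz_{R}]$ and $[\zz_{-L},-1]$. On $[-1,1]$ both you and the paper use the same device (the global bound $|\Phi''(z)|\le \phi(1)=(2\pi e)^{-1/2}$ forces every step to have length $\Omega(\sqrt{\tau})$, giving $O(1/\sqrt{\tau})$ pieces), and your verification that the pointwise values $\phi(\zz_{i-1})\zz_{i-1}$ and $\phi(\zz_i)\zz_i$ used by Algorithms~\ref{alg:breakpoint_find}--\ref{alg:breakpoint_find_negative} genuinely dominate $C(\zz_{i-1},\zz_i)$ via the monotonicity of $|\Phi''|$ on each regime matches the paper's Proposition~\ref{prop: curv-property}. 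Where you diverge is the tail: you apply the same crude uniform step lower bound $\Omega(\sqrt{\tau})$ there too, which gives a count \emph{linear} in $\zz_R$, namely $O(\zz_R/\sqrt{\tau})$, and then close the argument by invoking the Mills-ratio estimate to take $\zz_R=\Theta(\sqrt{\log(1/\tau)})$. The paper instead exploits the fact that the steps $I_i$ are nondecreasing on $[1,\zz_R]$: bounding $\hat I=\sum_i I_i$ below by the last step and above by $\zz_R$ yields $\zz_{N_2-1}^2\le 2\log(\zz_R^3/(2\tau\sqrt{2\pi}))$, and combining with $\zz_{N_2-1}\ge 1+(N_2-1)\sqrt{2\tau\sqrt{2\pi}e^{0.5}}$ gives $N_2=O(\sqrt{\log(\zz_R^3/\tau)/\tau})$ --- a bound that depends only \emph{logarithmically} on $\zz_R$. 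The trade-off is that your argument is shorter and more elementary but hinges on choosing $\zz_R$ at (or near) the minimal value $\Phi^{-1}(1-\tau)$; since the theorem asserts sufficiency and the statement's hypothesis $\tau\ll\min\{|\zz_{-L}|,\zz_R\}$ together with the paper's own final step also presuppose a sensible choice of endpoints, this is an acceptable reading, but the paper's telescoping analysis remains valid for any $\zz_R$ that is polynomial in $1/\tau$, whereas yours would degrade to $O(\zz_R/\sqrt{\tau})$ for an extravagant choice of endpoint. Your justification that the minimal $\zz_R$ is "the operative case" is the one spot worth tightening if you wanted the statement to hold verbatim for every admissible $\zz_R$.
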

\begin{proof}
    See Appendix\ref{appndx:proof}.
\end{proof}

\subsection{Piecewise Linear Inner Approximation}\label{sec:PWL-inner}

An inner approximation of $\Phi(z)$ can be constructed similar to the outer approximation by taking secant approximation for $z \geq 0$ and tangent approximation for $z <0$. This ensures that the inner approximation $\underline{\Phi}(\cdot; \bzz)$ is always below the original curve $\Phi(z)$. More formally,
\vspace{-5pt}
\begin{equation}\label{def:def_phi_pwl_inner}
	\underline{\Phi}(z; \bzz)
	=
	\begin{dcases*}
		\min\left\{\Phi(\zz_{R}), \min_{i \in [R]} \left\{ g_i z + g_i^0 \right\} \right\},
		& if $z \geq 0$, \quad \; \text{(Secant)}\\
		\max\left\{ 0, \max_{i \in [L]_0}\left\{ h_i z + h_i^0 \right\} \right\},
		& otherwise, \; \text{(Tangent)}
	\end{dcases*}
\end{equation}
where
\vspace{-5pt}
\begin{equation*}
\begin{alignedat}{3}
    & g_i := \frac{\Phi(\zz_i) - \Phi(\zz_{i-1})}{\zz_i - \zz_{i-1}}, \; &&  g^0_i := \Phi(\zz_{i-1}) - \frac{\Phi(\zz_i) - \Phi(\zz_{i-1})}{\zz_i - \zz_{i-1}} \zz_{i-1}, \; && i \in [R] \\ 
	& h_i \coloneqq \phi(\zz_i), \; && h_i^0 = \Phi(\zz_i) - \phi(\zz_i) \zz_i, \; && i \in [L]_0.
\end{alignedat}
\end{equation*}
Note that we have followed the convention that $g$ represents approximation of $\Phi(\cdot)$ on the non-negative reals and $h$ represents its approximation on the negative reals. Similar to $\Phi(z)$ and its outer approximation, its PWL inner approximation, $\underline{\Phi}\left(z; \bzz\right)$, is also concave over the domain $z \geq 0$.
Therefore, the constraint, $\underline{\Phi}(z; \bzz) \geq \zeta$, can be reformulated using linear constraints over this domain.
For $z \leq 0$, this function is convex, and it is reformulated using binary variables. The following proposition provides this formulation.

\begin{proposition}\label{prop:phi_pwl_inner_reformulation}
	If $\bzz$ is any valid array of breakpoints and $\ubar{z}, \bar{z} \in \mathbb R$ are given numbers satisfying $\ubar{z} \leq \zz_{-L} < \zz_{R} \leq \bar{z}$, then there exist $(z, \zeta) \in [\ubar{z}, \bar{z}] \times \mathbb R$ satisfying $\underline{\Phi} \left(z ; \bzz\right) \geq \zeta$ if and only if $\mG(\bzz) \neq \emptyset$, where
\vspace{-20pt}
    	\begin{equation}\label{eq:reform-inner-sub}
		\mG(\bzz) \coloneqq 
		\Set*{
			\begin{aligned}
				&\balpha \in \{0, 1\}^L, \\
				& \bt \in \{0, 1\}^4, \\
				& \by \in \mathbb{R}^3, \\
				& z, \zeta \in \mathbb{R} \\
                & \mathfrak{z} \in \mathbb{R}^L
			\end{aligned}
		}
		{
			\begin{aligned}
				& t_1  + t_2 + g_i y_3 + g_i^0 t_3 + \Phi(\zz_{R})t_4 \geq \zeta \quad \forall i \in [R], \\
				& h_L y_1 + h_L^0 t_1 + \sum_{i = 0}^{L-1} (h_i \mathfrak{z}_i + h_i^0 \alpha_i) + t_3 + \Phi(\zz_{R})t_4 \geq \zeta,\\
                &  h_L y_1 + h_L^0 t_1 \geq 0, \quad t_1 + t_2 + t_3 + t_4 = 1, \\
                & z = y_1 + y_2 + y_3 + y_4, \quad  -\, \ubar{z} \, t_1 \leq y_1 \leq \zz_{-L} t_1,\\
				& t_2 = \sum_{i=0}^{L-1} \alpha_{i}, \,\,\, \zz_{-i-1} \alpha_i \leq \mathfrak{z}_i \leq 0 \,\,\,\, \forall i \in [L-1]_0, \;\;\,  y_2 = \sum_{i=0}^{L-1} \mathfrak{z}_i, \\
                &  0 \leq y_3 \leq \zz_{R} t_3, \quad \bar{z} \, t_4 \geq y_4 \geq \zz_{R} t_4
			\end{aligned}
		}.
	\end{equation}
\end{proposition}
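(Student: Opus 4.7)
The plan is to establish the biconditional by a four-way case analysis on the location of $z$, mirroring the disjunctive structure of $\mG(\bzz)$. Since $t_1, t_2, t_3, t_4 \in \{0,1\}$ with $t_1 + t_2 + t_3 + t_4 = 1$, exactly one indicator is active, and, combined with the bounds on $y_1, y_2, y_3, y_4$ and the identity $z = y_1 + y_2 + y_3 + y_4$, this forces $z$ into one of the intervals $[\ubar{z}, \zz_{-L}]$, $(\zz_{-L}, 0)$, $[0, \zz_{R}]$, or $(\zz_{R}, \bar{z}]$. These match exactly the four regimes in the definition~\eqref{def:def_phi_pwl_inner} of $\underline{\Phi}(\cdot;\bzz)$: the left tail capped at $0$, the convex body where $\underline{\Phi}$ is the pointwise maximum of the $L+1$ tangents $h_i z + h_i^0$, the concave body where it is the pointwise minimum of the secants $g_i z + g_i^0$, and the right tail capped at $\Phi(\zz_{R})$.

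For the forward direction, given $(z, \zeta) \in [\ubar{z}, \bar{z}] \times \mathbb{R}$ with $\underline{\Phi}(z; \bzz) \geq \zeta$, I would construct a feasible point as follows. If $z \in [\ubar{z}, \zz_{-L}]$, set $t_1 = 1$ and $y_1 = z$; the extra constraint $h_L y_1 + h_L^0 t_1 \geq 0$ encodes the cap at $0$ and is implied on this tail, where $h_L z + h_L^0$ is the dominant tangent term. If $z \in (\zz_{-L}, 0)$, pick an index $i^* \in [L-1]_0$ achieving $\max_{i \in [L]_0}(h_i z + h_i^0)$ (for $z > \zz_{-L}$ this maximum is attained at some nontail breakpoint), set $t_2 = \alpha_{i^*} = 1$, $\mathfrak{z}_{i^*} = y_2 = z$, and verify the localizer bound $\zz_{-i^*-1} \leq z \leq 0$. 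If $z \in [0, \zz_{R}]$, set $t_3 = 1$ and $y_3 = z$, so that the family $t_1 + t_2 + g_i y_3 + g_i^0 t_3 + \Phi(\zz_{R}) t_4 \geq \zeta$ reduces to $g_i z + g_i^0 \geq \zeta$ for every $i \in [R]$, while the $\Phi(\zz_{R})$ cap is automatically dominated on this interval by concavity of $\Phi$. If $z \in (\zz_{R}, \bar{z}]$, set $t_4 = 1$ and $y_4 = z$, and the binding inequality collapses to $\Phi(\zz_{R}) \geq \zeta$.

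For the reverse direction, given a feasible point of $\mG(\bzz)$, the active $t_j$ identifies which piece of $\underline{\Phi}$ applies, and the binding constraint exhibits one of $h_L z + h_L^0$ (together with $h_L z + h_L^0 \geq 0$), some $h_{i^*} z + h_{i^*}^0$, the family $\{g_i z + g_i^0\}_{i \in [R]}$, or $\Phi(\zz_{R})$ as a valid lower bound for $\zeta$. Since each such expression is either an individual tangent or secant (hence bounded above or below by $\underline{\Phi}$ in the relevant direction) or the explicit tail value, the inequality $\underline{\Phi}(z; \bzz) \geq \zeta$ follows directly from~\eqref{def:def_phi_pwl_inner}. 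The main obstacle will be the convex-side bookkeeping: the $L+1$ tangents indexed by $[L]_0$ are split across a one-hot selection over just $\{0, \ldots, L-1\}$ (via $\alpha_i$ paired with the continuous localizers $\mathfrak{z}_i$ and the coupling $y_2 = \sum_i \mathfrak{z}_i$, $t_2 = \sum_i \alpha_i$) plus the separate tail term $h_L y_1 + h_L^0 t_1$ guarded by $h_L y_1 + h_L^0 t_1 \geq 0$. Carefully verifying that this split faithfully reproduces both the pointwise maximum of tangents on $(\zz_{-L}, 0)$ and the $\max\{0, \cdot\}$ cap on $[\ubar{z}, \zz_{-L}]$, without gaps or overlap between the two cases, is the most technical step.
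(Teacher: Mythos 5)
Your proposal follows essentially the same route as the paper's own proof: a four-way case split according to which indicator $t_j$ is active, an explicit construction of a point of $\mG(\bzz)$ in the forward direction, and reading off the binding tangent/secant/tail inequality to recover $\underline{\Phi}(z;\bzz)\geq\zeta$ in the reverse direction. One caution on the convex-side case you rightly single out as the delicate step: your parenthetical claim that for $z>\zz_{-L}$ the maximum $\max_{i\in[L]_0}(h_i z+h_i^0)$ is attained at a nontail index is false for $z$ just to the right of $\zz_{-L}$, where by strict convexity the tangent at $\zz_{-L}$ (index $L$) strictly dominates the tangent at $\zz_{-L+1}$, so no $i^*\in[L-1]_0$ certifies $\zeta=\underline{\Phi}(z;\bzz)$ there. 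The paper's proof makes the same unjustified selection of $\hat{i}\in[L-1]_0$ at this point, so your plan is faithful to the published argument, but this is the one sub-case that neither treatment fully closes.
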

\begin{proof}
    See Appendix~\ref{appndx:proof}.
\end{proof}

  In the above proposition, the binary variables, $t_1, t_2, t_3,$ and $t_4$, are used to indicate if $z \in (\ubar{z}, \zz_{-L}]$, $z \in [\zz_{-L}, 0]$, $z \in [0, \zz_R]$, and $z \in [\zz_R, \bar{z})$, respectively.
 Applying Proposition~\ref{prop:phi_pwl_inner_reformulation} to each of the $K$ constraints, $\hat{\Phi}(z_k) \geq \zeta_k$, we obtain the following MIQP approximation of the reformulated problem~\eqref{eq:ccp_intermediate}.
 \vspace{-10pt}
\begin{equation}\label{eq:ccp_pwl-inner}
	\begin{aligned}
		\mathop{\text{min}} \; & \,\,  \bc^\top \bx \\
		\text{s.t.} \; 
		&\, \,  \sum_{k=1}^K w_k \zeta_k \geq
		\theta, \quad  \bx \in \mathcal{X}, \quad \bz \in \mathbb{R}^K, \quad  \bzeta \in [0, 1]^K, \quad \blambda \in \mathbb{R}^K_{+},  \\
		& \, \, \left.\begin{aligned}
			& (\bm{\alpha}_k,  \bt_k, \by_k, z_k, \zeta_k) \in \mG(\bzz), \quad b - \bx^\top \bmu_k \geq z_k \lambda_k, \quad \bx^\top \bSigma_k \bx = \lambda_k^2
		\end{aligned}\right\} \; k \in [K]. 
	\end{aligned}
\end{equation}
We state the counterparts of Lemma~\ref{lem: epsilon-reltn-z-phi-outer} and Theorem~\ref{thm:complexity-breakpoint} for the inner approximation in the Appendix~\ref{appndx: PWL-I} as Lemma~\ref{lem: epsilon-reltn-z-phi-inner} and Theorem~\ref{thm:complexity-breakpoint-inner}, respectively. 

\begin{remark}
If the original problem~\eqref{eq:ccp} is feasible, the PWL-O model~\eqref{eq:ccp_pwl-outer} is always feasible for any valid array of breakpoints. In contrast, the PWL-I model~\eqref{eq:ccp_pwl-inner} may not always be feasible despite the feasibility of the original problem. However, note that a feasible inner approximation always over-satisfies the chance constraint.    
\end{remark}

\subsection{Optimality Guarantees to a Given Tolerance}\label{sec:accuracy-approx-obj}

We now show that the optimal objective values of the PWL-O and PWL-I models can be arbitrarily close to the optimal objective of the original chance-constrained model by controlling the accuracy threshold $\tau$ in the breakpoint calculation algorithms.
To that end, we require the following additional assumptions and definition.

\begin{assumption}\label{assmp:non-empty-interior}
There exists $\rho > 0$ such that the feasible set,
$P(\theta')$, is nonempty for all $\theta' \in [\theta - \rho, \theta + \rho]$.
\end{assumption}

\begin{assumption}\label{assmp:strict-strict-interior}
If $b = 0$, then there exists $\hat{\delta} > 0$ such that $P(\theta) \cap \{\bx \in \mathbb{R}^n | \|\bx\| \le \hat{\delta} \}  = \emptyset$. In other words, if $b = 0$, then $\bx = \mathbf{0}$ is not a feasible solution of the chance-constrained problem~\eqref{eq:ccp}.

\end{assumption}

\begin{assumption}\label{assmp:LICQ}
The linear independence constraint qualification is satisfied at all optimal solutions of the chance-constrained problem~\eqref{eq:ccp}.
\end{assumption}

\begin{definition}[Uniform compactness] \cite[Definition~1.3]{gauvin1977differential}.\label{def: uniform-compactness}
   The set $P(\theta)$ is uniformly compact near $\theta$ if there is a neighborhood $[\theta - \rho, \theta + \rho]$ of $\theta$ for some $\rho > 0$ such that the closure of the set, $\bigcup_{\theta' \in [\theta - \rho, \theta + \rho]} P(\theta')$, is compact.
\end{definition}

\begin{remark}
    Assumption~\ref{assmp:strict-strict-interior} rules out $\mathbf{0}$ as an optimal solution when $b=0$. This is a mild assumption even when $b \neq 0$ since this can be explicitly checked separately, and it might not even be of practical interest.
\end{remark}
\vspace{-30pt}
\begin{equation}\label{eq:G}
\text{Now, let} \, \, G : \mathbb{R}^n \to \mathbb{R} \,\, \text{be defined such that}\,\,
G(\bx) = \begin{dcases*}
\mathbbm{1}_{\geq 0}(b), & if $\bx = \mathbf{0}$, \\
\Phi\left(\frac{b-\bmu^\top \bx}{\sqrt{\bx^\top \bSigma \bx}}\right), & otherwise.
\end{dcases*}
\end{equation}
We show some graphs of $G(\bx)$ and its derivative in a one-dimensional setting below.
We find that $G(\bx)$ and its derivative are well-behaved near $\bx = 0$. We will formally prove this in the general case.
\vspace{-10pt}
\begin{figure}[h]
	\centering
	\begin{subfigure}[b]{0.48\textwidth}
		\centering
		\includegraphics[width=\textwidth]{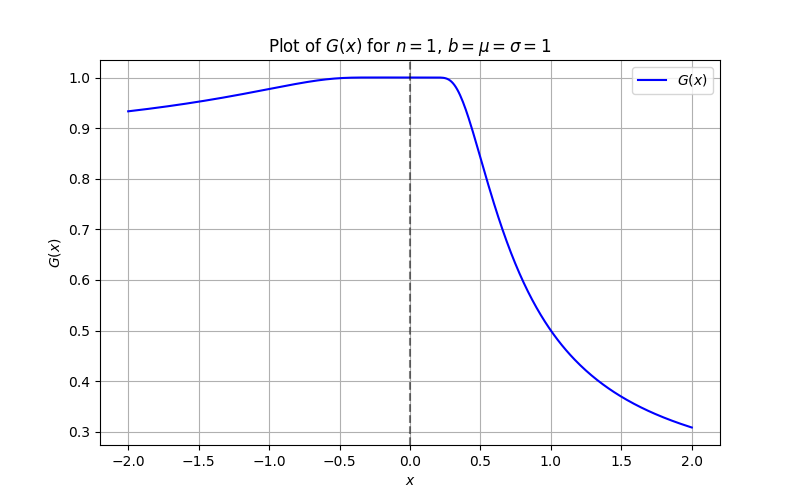
		}
	\end{subfigure}
	\hfill
	\begin{subfigure}[b]{0.48\textwidth}
		\centering
		\includegraphics[width=\textwidth]{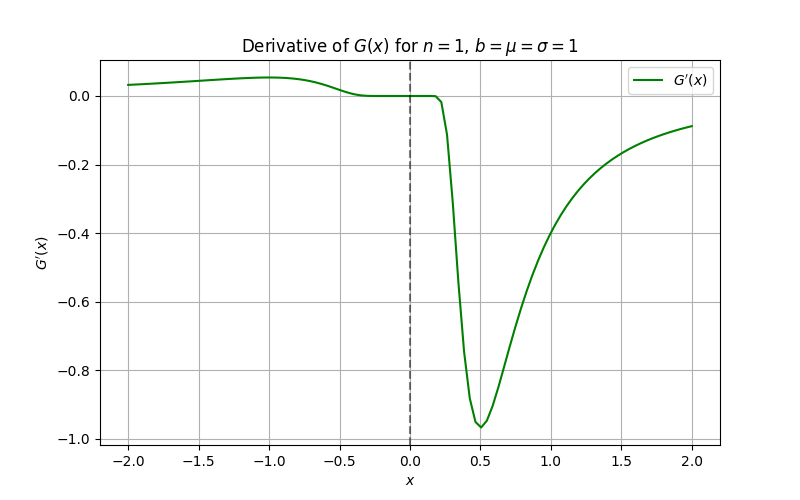}
	\end{subfigure}
	\caption{\centering The graph of $G(\bx)$ (left) and $G'(\bx)$ (right) when $\bx$ is one-dimensional.}
	\label{fig:derivative-behaviour}
\end{figure}

\begin{lemma}\label{lem:uniform-compactness}
Let $G$ be defined as in~\eqref{eq:G}. Then, the following holds:
\begin{enumerate}
    \item [(i)] If $b <0$, then $\mathbf{0} \notin P(\theta)$. 
    \item[(ii)] Under Assumptions~\ref{assmp:positive-definiteness}-\ref{assmp:strict-strict-interior}, $G(\bx)$ is continuous on $P(\theta)$.
    \item[(iii)] Under Assumptions~\ref{assmp:positive-definiteness}-\ref{assmp:strict-strict-interior}, $P(\theta)$ is uniformly compact near $\theta$. 
\end{enumerate}
\end{lemma}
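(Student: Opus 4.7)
}

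For part (i), I would proceed by direct evaluation. Using the definition in~\eqref{eq:cc_p_x_definition} and~\eqref{eq:cc_probability_standardized_form}, $p(\mathbf{0}) = \sum_{k=1}^K w_k p_k(\mathbf{0}) = \sum_{k=1}^K w_k \mathbbm{1}_{\geq 0}(b) = \mathbbm{1}_{\geq 0}(b)$ since $\ones^\top \bw = 1$. For $b < 0$ this equals $0 < \theta$, so the chance constraint is violated at $\mathbf{0}$ and $\mathbf{0} \notin P(\theta)$.

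For part (ii), I would split the argument by location. Away from the origin, the map $\bx \mapsto \bx^\top \bSigma \bx$ is a polynomial that is strictly positive on $\mathbb{R}^n \setminus \{\mathbf{0}\}$ by Assumption~\ref{assmp:positive-definiteness}, so $\sqrt{\bx^\top \bSigma \bx}$ is continuous and nonzero there; composing with the affine map $b - \bmu^\top \bx$ and the continuous CDF $\Phi$ shows $G$ is continuous at every $\bx \neq \mathbf{0}$. It remains to handle $\bx = \mathbf{0}$, but only when $\mathbf{0} \in P(\theta)$. Part (i) already rules this out when $b<0$, and Assumption~\ref{assmp:strict-strict-interior} rules it out when $b=0$ (indeed, it gives a full neighborhood of $\mathbf{0}$ disjoint from $P(\theta)$, so $\mathbf{0}$ is not even a limit point and continuity on $P(\theta)$ is automatic). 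The substantive case is $b > 0$. Here I would write $\bx = t \bv$ with $\|\bv\| = 1$ and $t \to 0^+$. Assumption~\ref{assmp:positive-definiteness} combined with compactness of the unit sphere gives $\sigma_{\min} \coloneqq \min_{\|\bv\|=1} \bv^\top \bSigma \bv > 0$, so the denominator satisfies $\sqrt{\bx^\top \bSigma \bx} \geq t\sqrt{\sigma_{\min}}$, while the numerator $b - t\,\bmu^\top \bv \to b > 0$. Therefore the argument of $\Phi$ diverges to $+\infty$ uniformly in $\bv$, and $G(\bx) = \Phi(\cdot) \to 1 = G(\mathbf{0})$, establishing continuity at the origin.

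For part (iii), I would invoke compactness of $\mX$ stated in the problem setup. Since $P(\theta') \subseteq \mX$ for every $\theta'$, any union $\bigcup_{\theta' \in [\theta - \rho, \theta + \rho]} P(\theta')$ is contained in $\mX$, and its closure is therefore a closed and bounded subset of $\mathbb{R}^n$, hence compact. Assumption~\ref{assmp:non-empty-interior} supplies a $\rho > 0$ for which this union is nonempty, matching Definition~\ref{def: uniform-compactness}.

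I expect the main obstacle to be the $b > 0$ limit in part (ii): care is needed to obtain the uniform lower bound on the denominator and to check that the argument of $\Phi$ diverges uniformly over the direction $\bv$, so that $\Phi \to 1$ without having to fix the direction of approach. Once Assumption~\ref{assmp:positive-definiteness} is leveraged via the compact unit sphere, this becomes routine. Parts (i) and (iii) are essentially bookkeeping, while part (ii) is where the positive definiteness of the covariance truly enters.
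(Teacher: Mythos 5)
Your overall structure matches the paper's: part (i) by direct evaluation of $p(\mathbf{0})$, part (ii) split between $\bx \neq \mathbf{0}$ (routine composition of continuous maps) and the origin with $b>0$ (the paper handles this with an explicit $\varepsilon$--$\delta$ choice of $\delta$, you with a limit along $\bx = t\bv$; same idea), and part (iii) via boundedness of $\mX$. Your part (iii) is in fact cleaner than the paper's: since Definition~\ref{def: uniform-compactness} only asks that the \emph{closure} of $\bigcup_{\theta'} P(\theta')$ be compact, containment in the compact set $\mX$ already suffices, and the paper's additional verification that the union is closed is not needed.

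The one place where your written argument does not support its conclusion is the key limit in part (ii). You bound the denominator from \emph{below}, $\sqrt{\bx^\top \bSigma \bx} \ge t\sqrt{\sigma_{\min}}$, and then assert that the argument of $\Phi$ diverges to $+\infty$. But with a positive numerator, a lower bound on the denominator yields an \emph{upper} bound on the ratio; it cannot force divergence. What is needed is the opposite inequality, $\sqrt{\bx^\top \bSigma \bx} = t\sqrt{\bv^\top\bSigma\bv} \le t\,\|\bSigma^{1/2}\|$, so that
\[
\frac{b - t\,\bmu^\top\bv}{\sqrt{\bx^\top\bSigma\bx}} \;\ge\; \frac{b - t\,\|\bmu\|}{t\,\|\bSigma^{1/2}\|} \;\longrightarrow\; +\infty \quad \text{as } t \to 0^+,
\]
uniformly over $\|\bv\|=1$. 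This is exactly the bound the paper uses in its $\delta$ construction. The role of positive definiteness here is only to make the fraction well defined for $\bx \neq \mathbf{0}$; the divergence itself comes from the upper bound on the denominator, which holds for any $\bSigma$. The fix is a one-line substitution, so this is a slip rather than a structural gap, but as written the inequality you cite does not imply the claimed limit.
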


\begin{proof}
 We prove part \emph{(i)} using \eqref{eq:basic-GMM-cc} which clearly shows that for any $\theta > 0$,
 $ \sum_{k=1}^K w_k \mathbb{P}\left[0 \leq b \, \big\vert \, \bxi_k \sim \mathcal N (\bmu_k, \bSigma_k) \right] \geq \theta$ does not hold with $b < 0$.

We prove part~\emph{(ii)} by establishing continuity of $G$ at an arbitrary point $\bar{\bx} \in P(\theta)$. We consider two cases: (case-1) $\bar{\bx} \neq \mathbf{0}$ and (case-2) $\bar{\bx} = \mathbf{0}$.

To show (case-1), observe that $G(\bx) = \Phi(g(\bx))$ in this case, where $g(\bx) = \frac{b-\bmu^\top \bx}{\sqrt{\bx^\top \bSigma \bx}}$. Now, let $\{\bx_m\}^\infty_{m=1}$ be any sequence in $\mathbb{R}^n \setminus \{\mathbf{0}\}$ converging to $\bar{\bx}$. Then, since Assumption~\ref{assmp:positive-definiteness} implies $\bx^\top \bSigma \bx > 0$ for any $\bx \neq \mathbf{0}$, and $\bar{\bx} \neq \mathbf{0}$, we have:
\begin{align*}
    \lim_{m \to \infty} g(\bx_m) = \lim_{m \to \infty} \frac{b-\bmu^\top \bx_m}{\sqrt{\bx^\top_m \bSigma \bx_m}} = \frac{\lim_{m \to \infty} (b-\bmu^\top \bx_m)}{\lim_{m \to \infty} \sqrt{\bx^\top_m \bSigma \bx_m}} = \frac{b-\bmu^\top \bar{\bx}}{\sqrt{\bar{\bx}^\top \bSigma \bar{\bx}}} = g(\bar{\bx}).
\end{align*}

\noindent The second equality holds by \cite[Theorem~4.4]{rudin1964principles} and the second-to-last equality follows due to the continuity of the functions, $b-\bmu^\top \bx$ and $\bx^\top \bSigma \bx$. 
Thus $g$ is continuous at $\bx = \bar{\bx} \neq \mathbf{0}$. Additionally, since $\Phi$ is continuous on $\mathbb{R}$, $\Phi\big(g(\bx)\big)$ is also continuous at all $\bx \neq \mathbf{0}$ by \cite[Theorem~4.7]{rudin1964principles}.

To prove (case-2), note that part~\emph{(i)} implies that we must have $b \geq 0$. Moreover, Assumption~\ref{assmp:strict-strict-interior} ensures that it suffices to consider only $b>0$. 
Observe that the definition of $G$ implies that it is continuous at $\mathbf{0}$, if for any $\varepsilon > 0$, there exists some $\delta > 0$ such that all $\bx \neq \mathbf{0}$ with $\|\bx\| < \delta$ satisfy $\big|1 - \Phi\big( \frac{b-\bmu^\top \bx}{\sqrt{\bx^\top \bSigma \bx}} \big) \big| < \varepsilon$, which is equivalent to $1 - \Phi\big( \frac{b-\bmu^\top \bx}{\sqrt{\bx^\top \bSigma \bx}} \big) \leq \varepsilon$ since $\Phi$ is always less than $1$.
To that end, define $\delta = \frac{b}{\Phi^{-1}(1-\varepsilon) \|\bSigma^{\frac12}\| \kappa} > 0$ with $\kappa = 1 + \frac{\|\bmu\|}{\Phi^{-1}(1-\varepsilon)\|\bSigma^{\frac12}\|}$. Then, observe that
\vspace{-10pt}
\begin{align*}
    & \|\bx\| \leq  \frac{b}{\Phi^{-1}(1-\varepsilon) \|\bSigma^{1/2}\| \kappa}
    \implies \; \sqrt{\bx^\top \bSigma \bx} \leq \|\bSigma^{1/2}\| \|\bx\| \leq  \frac{b}{\Phi^{-1}(1-\varepsilon) \kappa} \\
     \implies & \; \Phi^{-1}(1-\varepsilon) \sqrt{\bx^\top \bSigma \bx} \leq \frac{b}{ \kappa}
    \implies \; \bmu^\top \bx + \Phi^{-1}(1-\varepsilon) \sqrt{\bx^\top \bSigma \bx} \leq  \frac{b}{ \kappa} + \bmu^\top \bx  \\
     &  \leq  \frac{b}{ \kappa} + \|\bmu\|\frac{b}{\Phi^{-1}(1-\varepsilon) \|\bSigma^{1/2}\| \kappa}   \leq  \frac{b}{ \kappa} \big( 1 + \frac{\|\bmu\|}{\Phi^{-1}(1-\varepsilon) \|\bSigma^{1/2}\|}\big) = b \\
     \implies & \Phi^{-1}(1-\varepsilon) \sqrt{\bx^\top \bSigma \bx}   \leq b - \bmu^\top \bx 
     \implies 1-\varepsilon  \leq \Phi\big(\frac{b - \bmu^\top \bx}{\sqrt{\bx^\top \bSigma \bx}}\big)
     \implies  1  - \Phi\big(\frac{b - \bmu^\top \bx}{\sqrt{\bx^\top \bSigma \bx}}\big) \leq \varepsilon.
\end{align*}

To prove part ~\emph{(iii)}, we use the continuity property from part~\emph{(ii)}. By definition~\ref{def: uniform-compactness}, uniform compactness of  $P(\theta)$ near $\theta$ requires the closure of the set $\bigcup_{\theta' \in [\theta - \rho, \theta + \rho]} P(\theta')$ to be closed and bounded. Note that by Assumption~\ref{assmp:non-empty-interior}, $\bigcup_{\theta' \in [\theta - \rho, \theta + \rho]} P(\theta')$ is not empty. Additionally, part \emph{(ii)} implies that $p_k(\bx)$ and hence, $p(\bx) = \sum_{k = 1}^K w_k p_k(\bx)$, is continuous everywhere as $\Phi(g_k(\bx))$ is continuous. Then,
given any $\theta' \in [\theta - \rho, \theta + \rho]$, for any convergent sequence  $\{\bx_m\}$ with $\bx_m \in P(\theta') \; \forall m$, i.e. 
\vspace{-10pt}
$$\bH \bx_m = \bh, \bA \bx_m \geq \bd, p(\bx_m) \geq \theta', \; \forall m, \;\; \text{we have}$$
\vspace{-25pt}
\[\lim_{m \to \infty}\bH \bx_m = \bH \bar{x} = \bh, \; \lim_{m \to \infty}\bA \bx_m = \bA \bar{\bx}\geq \bd, \; \lim_{m \to \infty} p(\bx_m) = p(\bar{\bx}) \geq \theta'.
\]
Thus, $P(\theta')$ also includes the limit point $\bar{\bx}$. Therefore, $\bigcup_{\theta' \in [\theta - \rho, \theta + \rho]} P(\theta')$ is closed. Additionally, since $\mX$ is a bounded set, $\mX \cap \{ \bx \; |\; p(\bx) \geq \theta'\} \subseteq \mX$ is also bounded for every $\theta' \in [\theta - \rho, \theta + \rho]$. Therefore,  $\bigcup_{\theta' \in [\theta - \rho, \theta + \rho]} P(\theta')$ is compact.
\end{proof}

\begin{lemma}\label{prop:diff-Phi}
$G(\bx)$ is continuously differentiable on $\mathbb{R}^n$ under Assumption~\ref{assmp:strict-strict-interior}. Specifically, for all $\bx \in \mathbb{R}^n$,
\vspace{-20pt}
\begin{align}\label{eq:chain-rule}
\hspace{-0.4cm}\nabla G(\bx) = \Phi'(g(\bx)) \nabla g(\bx)
=
\frac{1}{\sqrt{2\pi}} e^{-g(\bx)^2 / 2} \bigg\{ \frac{-\bmu}{(\bx^\top \bSigma \bx)^{1/2}} - \frac{(b - \bmu^\top \bx) \bSigma \bx }{(\bx^\top \bSigma \bx)^{3/2}} \bigg\}
\end{align}

\end{lemma}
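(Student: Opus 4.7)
The plan is to split the analysis into two cases according to whether $\bx = \mathbf{0}$ or not, since the quotient $(b - \bmu^\top\bx)/\sqrt{\bx^\top\bSigma\bx}$ is singular at the origin. The case $\bx \neq \mathbf{0}$ is a routine application of the chain rule, while the origin requires a limit argument that exploits the rapid decay of the Gaussian tail to defeat the polynomial blow-up of $\nabla g(\bx)$.

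\textbf{Case 1 ($\bx \neq \mathbf{0}$).} On this open set, Assumption~\ref{assmp:positive-definiteness} guarantees $\bx^\top\bSigma\bx > 0$, so $g(\bx) \coloneqq (b - \bmu^\top\bx)/\sqrt{\bx^\top\bSigma\bx}$ is $C^1$ with
\[
\nabla g(\bx) = \frac{-\bmu}{\sqrt{\bx^\top\bSigma\bx}} - \frac{(b - \bmu^\top\bx)\,\bSigma\bx}{(\bx^\top\bSigma\bx)^{3/2}}.
\]
Since $\Phi$ is $C^1$ with $\Phi'(z) = \phi(z) = (2\pi)^{-1/2}e^{-z^2/2}$, applying the chain rule to $G = \Phi \circ g$ immediately yields the stated formula~\eqref{eq:chain-rule}. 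No subtleties arise here.

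\textbf{Case 2 ($\bx = \mathbf{0}$).} By part~(i) of Lemma~\ref{lem:uniform-compactness} one has $b \ge 0$ whenever $\mathbf{0}$ needs to be in play, and Assumption~\ref{assmp:strict-strict-interior} excludes the residual case $b = 0$ by separating $\mathbf{0}$ from $P(\theta)$ with a positive radius $\hat\delta$; thus the analysis reduces to $b > 0$. I would first show that $\nabla G(\mathbf{0}) = \mathbf{0}$ directly from the definition: for any unit direction $\bv$ and $t > 0$, the argument of $\Phi$ in $G(t\bv)$ behaves like $b/(t\sqrt{\bv^\top\bSigma\bv}) - \bmu^\top\bv/\sqrt{\bv^\top\bSigma\bv} \to +\infty$ as $t \to 0^+$, and the Mills-ratio bound $1 - \Phi(z) \le \phi(z)/z$ for $z > 0$ gives $|G(t\bv) - 1| = o(t^m)$ for every $m$. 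Hence the difference quotient $(G(t\bv) - G(\mathbf{0}))/t$ tends to $0$ uniformly in $\bv$, so $G$ is differentiable at $\mathbf{0}$ with vanishing gradient. I would then check that the formula~\eqref{eq:chain-rule} is continuous at the origin: using the uniform bounds $\lambda_{\min}(\bSigma)^{1/2}\|\bx\| \le \sqrt{\bx^\top\bSigma\bx} \le \lambda_{\max}(\bSigma)^{1/2}\|\bx\|$, the bracketed term in~\eqref{eq:chain-rule} is $O(1/\|\bx\|)$, whereas $g(\bx)^2 \ge c/\|\bx\|^2$ for some constant $c > 0$ and all $\bx$ in a punctured neighborhood of $\mathbf{0}$, so $e^{-g(\bx)^2/2} = O(e^{-c/(2\|\bx\|^2)})$ overwhelms any such inverse polynomial. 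Thus $\nabla G(\bx) \to \mathbf{0} = \nabla G(\mathbf{0})$ as $\bx \to \mathbf{0}$.

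\textbf{Main obstacle.} All of the real work is concentrated in Case~2: making precise the rate estimate that the Gaussian factor $e^{-g(\bx)^2/2}$ beats the $O(1/\|\bx\|)$ blow-up of $\nabla g(\bx)$, uniformly over the direction of approach. Once the two-sided bound $g(\bx) = b/\sqrt{\bx^\top\bSigma\bx} + O(1)$ (for $\bx$ in a small ball around the origin and $b > 0$) is recorded, the dominance of quadratic-exponential decay over rational singularities of bounded order delivers both the existence of $\nabla G(\mathbf{0})$ and its continuity, completing the argument.
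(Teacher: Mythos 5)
Your proof is correct, and it reaches the same conclusion as the paper by a noticeably more streamlined route. For the case $\bx \neq \mathbf{0}$ the two arguments coincide (chain rule on the open set where $g$ is smooth). At the origin, the paper computes one-sided coordinate partial derivatives and controls the difference quotients with the two-sided Szarek--Werner bound $\tfrac{2}{t+\sqrt{t^2+4}} \le e^{t^2/2}\int_t^\infty e^{-z^2/2}\,dz \le \tfrac{4}{3t+\sqrt{t^2+8}}$, and then proves continuity of $\nabla G$ at $\mathbf{0}$ by sandwiching $\partial G/\partial x_j$ between bounds $L(\bx), U(\bx)$ and killing those via a truncated power-series expansion of $e^{g(\bx)^2/2}$; differentiability then follows from the continuous-partials criterion. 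You instead establish total differentiability at $\mathbf{0}$ directly, by showing the difference quotient $(G(t\bv)-1)/t$ vanishes uniformly over unit directions $\bv$ using the one-sided Mills-ratio bound $1-\Phi(z)\le \phi(z)/z$, and you handle continuity of the gradient by the elementary observation that $e^{-g(\bx)^2/2} = O(e^{-c/(2\|\bx\|^2)})$ dominates any rational singularity of bounded order --- which is essentially what the paper's power-series manipulation accomplishes in a more laborious way. Your approach buys brevity and avoids the coordinate-by-coordinate bookkeeping; the paper's buys explicit two-sided rate estimates. One small imprecision: the bracketed term in~\eqref{eq:chain-rule} is $O(1/\|\bx\|^2)$ near the origin (because of the $(\bx^\top\bSigma\bx)^{3/2}$ denominator against a single factor of $\bSigma\bx$), not $O(1/\|\bx\|)$ as you state; this does not affect your conclusion since, as you note, the Gaussian factor beats any fixed polynomial order, but the exponent should be corrected. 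You should also make sure the reduction to $b>0$ is stated for the full lemma (the claim is continuous differentiability on all of $\mathbb{R}^n$, so the origin must be treated whenever $\mathbf{0}$ is in the domain, which Assumption~\ref{assmp:strict-strict-interior} together with Lemma~\ref{lem:uniform-compactness}(i) licenses exactly as you and the paper both argue).
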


\begin{proof}
Let $g(\bx) \coloneqq \frac{b-\bmu^\top \bx}{\sqrt{\bx^\top \bSigma \bx}}$. We consider two cases: $\bx \neq \mathbf{0}$ and $\bx = \mathbf{0}$. 

Following the steps in the proof of Lemma~\ref{lem:uniform-compactness}, it can be shown that $g'(\bx)$ exists for all $\bx \neq \mathbf{0}$, and since $\Phi(\cdot)$ is differentiable everywhere in $\mathbb{R}$,
$\nabla G(\bx)$ is well-defined and continuous at all $\bx \neq \mathbf{0}$. Hence, for all $\bx \neq \mathbf{0}$, we can apply the chain rule \cite{rudin1964principles} and obtain~\eqref{eq:chain-rule}.

To prove differentiability at $\bx = \mathbf{0}$ it suffices to prove it for $b > 0$ (Assumption~\ref{assmp:strict-strict-interior} and Lemma~\ref{lem:uniform-compactness}(i)). Recall that a function $G(\bx)$ is continuously differentiable at $\mathbf{0}$ if and only if its partial derivatives exist and are continuous at $\mathbf{0}$ \cite{marsden2003vector}. The partial derivatives of $G$ exist at $\mathbf{0}$ if following limits exist:
\vspace{-10pt}
\begin{align}\label{def:differentiability}
\frac{\partial G}{\partial \bx_j} \big|_{_{\bx = 0}} 
&= 
\underbrace{
\lim_{h \to 0^+}\frac{ \Phi(g(\mathbf{0} + h \bs{e}_j)) - G(\mathbf{0})}{h}
}_{\text{A}} 
= 
\underbrace{
\lim_{h \to 0^+}\frac{ \Phi(g(\mathbf{0} - h \bs{e}_j)) - G(\mathbf{0})}{h}
}_{\text{B}} 
\quad \forall j
\end{align}

Let $\|\bSigma_j\|$ be the norm of the $j^{th}$ column vector of $\bSigma$. Note that $g(\mathbf{0} + h \bs{e}_j) =  (b - \mu_j h)/h \|\bSigma_j\| = b/h\|\bSigma_j\| - \mu_j/ \|\bSigma_j\| = q_j/h - r_j$, where $q_j = b/\|\bSigma_j\| > 0$ since $b > 0$, and $r_j := \mu_j/ \|\bSigma_j\|$. Since $G(\mathbf{0}) = 1$, using the definition of $\Phi$ in term A of~\eqref{def:differentiability}, we have:
\vspace{-5pt}
\begin{align*}
A = \lim_{h \to 0^+}\frac{ \int_{-\infty}^{\frac{q_j}{h} - r_j} \frac{1}{\sqrt{2 \pi}} e^{-\frac{z^2}{2}} dz - 1}{h} = \lim_{h \to 0^+}\frac{-\frac{1}{\sqrt{2 \pi}} \int^{\infty}_{\frac{q_j}{h} - r_j}  e^{-\frac{z^2}{2}} dz}{h} \;\; \forall j.
\end{align*}
According to \cite[Proposition 3]{szarek1999nonsymmetric}, for $t > -1$, 
$\frac{2}{t+\sqrt{t^2 + 4}} \leq e^{t^2/2} \int^{\infty}_{t} e^{-z^2/2} dz \leq \frac{4}{3t+\sqrt{t^2 + 8}}$. Let $ t_j = q_j/h - r$. Then as $h \to 0^+$, $t_j \to \infty$ since $q_j >0$. Thus, the condition $t_j > -1$ is satisfied  for all $j$. Therefore,
\vspace{-20pt}
\begin{align*}
  \frac{2 e^{-(\frac{q_j}{h}-r)^2/2}}{h\big((\frac{q_j}{h}-r)+\sqrt{(\frac{q_j}{h}-r)^2 + 4}\big)} \leq \frac{\int^{\infty}_{\frac{q_j}{h}-r} e^{-z^2/2} dz}{h} \leq \frac{4 e^{-(\frac{q_j}{h}-r)^2/2}}{h \big(3 (\frac{q_j}{h}-r)+\sqrt{(\frac{q_j}{h}-r)^2 + 8}\big)}.
\end{align*}
Note that the lower bound $\frac{2 e^{{(\frac{q_j}{h}-r)^2}/{2}}}{h\left(\left(\frac{q_j}{h}-r\right)+\sqrt{\left(\frac{q_j}{h}-r\right)^2 + 2}\right)} =  \frac{2 e^{-\left(\frac{q_j}{h}-r\right)^2/2}}{(q_j - r h)+\sqrt{(q_j - r h)^2 + 2 h^2}} $ goes to 0 as $h \to 0^+$. Similarly, the upper bound also goes to 0 as $h \to 0$.

Now for the term $B$ in \eqref{def:differentiability}, $g(\mathbf{0} - h \bs{e}_j) =  (b + \mu_j h)/h \|\bSigma_j\| = b/h\|\bSigma_j\| + \mu_j/ \|\bSigma_j\| = q_j/h + r_j = t_j$. Thus, as $h \to 0^+$, $t_j \to \infty$ (since $q_j >0$). Hence, the condition $t_j > -1$ is satisfied. Therefore,
\vspace{-5pt}
\begin{align*}
  \frac{2 e^{-(\frac{q_j}{h}+r)^2/2}}{h\big((\frac{q_j}{h}+r)+\sqrt{(\frac{q_j}{h}+r)^2 + 4}\big)} \leq \frac{\int^{\infty}_{\frac{q_j}{h}+r} e^{-z^2/2} dz}{h} \leq \frac{4 e^{-(\frac{q_j}{h}+r)^2/2}}{h \big(3 (\frac{q_j}{h}+r)+\sqrt{(\frac{q_j}{h}+r)^2 + 8}\big)}.
\end{align*}
Similar to the proof for the part A in \eqref{def:differentiability}, we can show that both lower and upper bounds in the above inequality go to $0$ as $h \to 0^+$. Thus, $\nabla G(\bx) = \mathbf{0}$ at $\bx = \mathbf{0}$ for $ b > 0.$ It shows that $G(\bx)$ is differentiable at all $\bx$. 

We now show that  \eqref{eq:chain-rule} provides an expression for $\nabla G(\bx)$  at all $\bx$, including $\bx = \mathbf{0}$, and that $\nabla G(\bx) $ is continuous. From \eqref{eq:chain-rule},
\vspace{-5pt}
\begin{align}\label{eq:chain-rule-j}
\hspace{-0.4cm} \frac{\partial G(\bx)}{ \partial \bx_j} =
\frac{1}{\sqrt{2\pi}} e^{-g(\bx)^2 / 2} \bigg\{ \frac{-\bmu_j}{(\bx^\top \bSigma \bx)^{1/2}} - \frac{(b - \bmu^\top \bx) (\bSigma \bx)_j }{(\bx^\top \bSigma \bx)^{3/2}} \bigg\}.
\end{align}
 To prove continuity of $\nabla G(\bx)$ at $\bx = \mathbf{0}$, when $b > 0$, we show that 
\vspace{-5pt}
$$\ \lim_{\bx \to 0} \frac{\partial G(\bx)}{ \partial \bx_j} = \frac{\partial G(\bx)}{ \partial \bx_j}|_{\bx = \mathbf{0}}= 0, \; \forall j$$

\noindent  by showing that $\limsup_{\bx \to \mathbf{0}} \frac{\partial G(\bx)}{ \partial \bx_j} = \liminf_{\bx \to \mathbf{0}}\frac{\partial G(\bx)}{ \partial \bx_j} = 0$. Note that since $- \|\bSigma^{1/2}\| \|\bSigma^{1/2} \bx\| \leq (\bSigma \bx)_j = (\bSigma^{1/2} \bSigma^{1/2} \bx)_j \leq \|\bSigma^{1/2}\| \|\bSigma^{1/2} \bx\|$,
\vspace{-5pt}
\begin{align}
& \frac{1}{\sqrt{2 \pi}} \exp\{-(\frac{b-\bmu^\top \bx}{\sqrt{\bx^\top \bSigma \bx}})^2 \} \bigg\{ - \frac{|b - \bmu^\top \bx| \| \bSigma^{1/2} \| \| \bSigma^{1/2} \bx \| }{\|\bSigma^{1/2} \bx \|^3} - \frac{\|\mu\|}{\|\bSigma^{1/2} \bx \|} \bigg \} \leq \frac{\partial G(\bx)}{ \partial \bx_j} \nonumber\\
&  \leq \frac{1}{\sqrt{2 \pi}} \exp\{-(\frac{b-\bmu^\top \bx}{\sqrt{\bx^\top \bSigma \bx}})^2 \} \bigg\{ \frac{|b - \bmu^\top \bx| \| \bSigma^{1/2} \| \| \bSigma^{1/2} \bx \| }{\|\bSigma^{1/2} \bx \|^3} + \frac{\|\mu\|}{\|\bSigma^{1/2} \bx \|} \bigg \} \;\; \forall j. \label{eq:gradient-bound}
\end{align}

Let the left and right hand side of \eqref{eq:gradient-bound} be $L(\bx)$ and $U(\bx)$, respectively. It suffices to show that $\limsup_{\bx \to \mathbf{0}} L(\bx) = \liminf_{\bx \to \mathbf{0}} U(\bx) = 0.$

\noindent We first show that $\liminf_{\bx \to \mathbf{0}} U(\bx) = 0$.  Since $e^{y} = \sum_{\ell=0}^{\infty} \frac{(y)^\ell}{\ell!} = 1 + y + \frac{y^2}{2!} + \frac{y^3}{3!} + \cdots$, for any $j \in [n], \; \ell \in \mathbb{N}$ and for $\ell \geq 2$, 
\vspace{-10pt}
\begin{align}
& U(\bx) =  \frac{1}{\sqrt{2 \pi}} \bigg\{   \bigg \| \frac{\frac{|b - \bmu^\top \bx| \| \bSigma^{1/2} \|}{\|\bSigma^{1/2} \bx \|^2} }{1 + (\frac{b-\bmu^\top \bx}{\|\bSigma^{1/2} \bx \|})^2 + \frac{1}{2!} (\frac{b-\bmu^\top \bx}{\|\bSigma^{1/2} \bx \|})^4 + \frac{1}{3!} (\frac{b-\bmu^\top \bx}{\|\bSigma^{1/2} \bx \|})^6 + \cdots} \bigg \| \nonumber\\
& +  \bigg \| \frac{ \frac{\|\mu\|}{\|\bSigma^{1/2} \bx \|} }{1 + (\frac{b-\bmu^\top \bx}{\|\bSigma^{1/2} \bx \|})^2 + \frac{1}{2!} (\frac{b-\bmu^\top \bx}{\|\bSigma^{1/2} \bx \|})^4 + \frac{1}{3!} (\frac{b-\bmu^\top \bx}{\|\bSigma^{1/2} \bx \|})^6 + \cdots} \bigg \| \bigg\} \nonumber\\
& = \frac{1}{\sqrt{2 \pi}}  \bigg\{ \bigg \| \frac{|b - \bmu^\top \bx| \| \bSigma^{1/2} \| }{\|\bSigma^{1/2} \bx \|^2 + (b-\bmu^\top \bx)^2 + \frac{1}{2!} \frac{(b-\bmu^\top \bx)^4}{\|\bSigma^{1/2} \bx \|^2} + \frac{1}{3!} \frac{(b-\bmu^\top \bx)^6}{\|\bSigma^{1/2} \bx \|^4} + \cdots} \bigg \| \nonumber\\
& +  \bigg \| \frac{ \|\mu\| }{\|\bSigma^{1/2} \bx \| + \frac{(b-\bmu^\top \bx)^2}{\|\bSigma^{1/2} \bx \|} + \frac{1}{2!} \frac{(b-\bmu^\top \bx)^4}{\|\bSigma^{1/2} \bx \|^3} + \frac{1}{3!} \frac{(b-\bmu^\top \bx)^6}{\|\bSigma^{1/2} \bx \|^5} + \cdots} \bigg \| \bigg\} \nonumber \\
& \leq \frac{1}{\sqrt{2 \pi}}  \bigg\{ \bigg \| \frac{|b - \bmu^\top \bx| \| \bSigma^{1/2} \| }{\|\bSigma^{1/2} \bx \|^2 + (b-\bmu^\top \bx)^2 + \frac{1}{2!} \frac{(b-\bmu^\top \bx)^4}{\|\bSigma^{1/2} \bx \|^2} + \cdots +\frac{1}{\ell!} \frac{(b-\bmu^\top \bx)^{2\ell}}{\|\bSigma^{1/2} \bx \|^{2\ell - 2}}} \bigg \| \nonumber\\
& +  \bigg \| \frac{ \|\mu\| }{\|\bSigma^{1/2} \bx \| + \frac{(b-\bmu^\top \bx)^2}{\|\bSigma^{1/2} \bx \|} + \frac{1}{2!} \frac{(b-\bmu^\top \bx)^4}{\|\bSigma^{1/2} \bx \|^3} + \cdots + \frac{1}{\ell!} \frac{(b-\bmu^\top \bx)^{2\ell}}{\|\bSigma^{1/2} \bx \|^{2\ell -1}} } \bigg \| \bigg\} \nonumber\\
& = \frac{1}{\sqrt{2 \pi}}  \bigg\{ \bigg \| \frac{\ell!\|\bSigma^{1/2} \bx \|^{2\ell - 2} |b - \bmu^\top \bx| \| \bSigma^{1/2} \| }{\ell!\|\bSigma^{1/2} \bx \|^{2\ell} + (b-\bmu^\top \bx)^2 \ell!\|\bSigma^{1/2} \bx \|^{2\ell - 2} + \cdots + (b-\bmu^\top \bx)^{2\ell}} \bigg \| \nonumber\\
& +  \bigg \| \frac{\ell!\|\bSigma^{1/2} \bx \|^{2\ell - 1}  \|\mu\| }{\ell!\|\bSigma^{1/2} \bx \|^{2\ell} + (b-\bmu^\top \bx)^2 \ell!\|\bSigma^{1/2} \bx \|^{2\ell - 2} + \cdots + (b-\bmu^\top \bx)^{2\ell} } \bigg \| \bigg\}, \nonumber
\end{align}
where the first inequality in the above follows since summation is truncated and all terms are positive. Now, since $b > 0$, taking limit on both sides of the above expression,  we obtain
$ \liminf_{\bx \to \mathbf{0}} U(\bx) = 0$. A similar argument shows that $\limsup_{\bx \to \mathbf{0}} L(\bx) = 0$. Thus, $\nabla G(\bx)$ in~\eqref{eq:chain-rule} is continuous at $\bx = \mathbf{0}$ and $G(\bx)$ is continuously differentiable on $\mathbb{R}^n$.
\end{proof}

We are now in a position to present the main result of this section in Theorem~\ref{thm:convergence-outer} below. In particular, we show that using $O( \sqrt{\frac{1} {\tau}\log(\frac{1}{\tau})})$ breakpoints can provide a solution that achieves a user-specified tolerance $\hat{\tau}$ in the optimal objective value. We first establish the continuity of the optimal objective function of~\eqref{eq:ccp} by using a result from Gauvin and Tolle  \cite{gauvin1977differential}.

\begin{lemma}\label{lem:finite-convergence}
Let Assumptions~\ref{assmp:positive-definiteness}-\ref{assmp:LICQ} hold. Then, the optimal value function $Z^\star(\theta)$ is continuous at $\theta$.
\end{lemma}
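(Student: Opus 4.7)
The plan is to invoke the classical parametric-sensitivity result of Gauvin and Tolle~\cite{gauvin1977differential}, viewing $Z^\star(\cdot)$ as the optimal value of the mathematical program $\min\{\bc^\top \bx : \bx \in \mathcal{X}, \, p(\bx) \geq \theta'\}$ with $\theta'$ as the parameter. All the needed hypotheses are already in hand: the feasible map is uniformly compact near $\theta$ (Lemma~\ref{lem:uniform-compactness}(iii)), the probability function $p(\bx) = \sum_k w_k p_k(\bx)$ is continuous (Lemma~\ref{lem:uniform-compactness}(ii)) and in fact continuously differentiable (Lemma~\ref{prop:diff-Phi}), and LICQ holds at every optimal solution (Assumption~\ref{assmp:LICQ}). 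It therefore suffices to establish upper and lower semicontinuity of $Z^\star$ at $\theta$ directly, using these ingredients.

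For lower semicontinuity, I would take an arbitrary sequence $\theta_m \to \theta$ with corresponding optimal solutions $\bx_m \in P(\theta_m)$, which exist for all sufficiently large $m$ by Assumption~\ref{assmp:non-empty-interior}. Uniform compactness gives a subsequence $\bx_{m_j} \to \bar{\bx}$, and passing to the limit in $\bH \bx_{m_j} = \bh$, $\bA \bx_{m_j} \geq \bd$, $p(\bx_{m_j}) \geq \theta_{m_j}$ using the continuity of $p$ yields $\bar{\bx} \in P(\theta)$. Hence $\liminf_{m\to\infty} Z^\star(\theta_m) = \liminf_{m\to\infty} \bc^\top \bx_m \geq \bc^\top \bar{\bx} \geq Z^\star(\theta)$.

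For upper semicontinuity, let $\bx^\star$ be an optimal solution of $P(\theta)$. If $p(\bx^\star) > \theta$, continuity of $p$ gives $\bx^\star \in P(\theta_m)$ for all large $m$, so $Z^\star(\theta_m) \leq \bc^\top \bx^\star = Z^\star(\theta)$. Otherwise $p(\bx^\star) = \theta$, and I would use LICQ at $\bx^\star$ together with $C^1$ smoothness of $p$ (Lemma~\ref{prop:diff-Phi}) to apply the implicit function theorem to the system of active constraints parameterized by $\theta'$. This produces a continuous curve $\theta' \mapsto \bx(\theta')$ with $\bx(\theta) = \bx^\star$ satisfying $p(\bx(\theta')) \geq \theta'$, $\bA \bx(\theta') \geq \bd$, and $\bH \bx(\theta') = \bh$ for $\theta'$ near $\theta$. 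Then $\bc^\top \bx(\theta_m) \to \bc^\top \bx^\star = Z^\star(\theta)$, giving $\limsup_{m\to\infty} Z^\star(\theta_m) \leq Z^\star(\theta)$.

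The main obstacle is the construction of the feasible perturbation $\bx(\theta')$ in the upper-semicontinuity step, since without LICQ one cannot guarantee the active constraint Jacobian has the full row rank needed to invoke the implicit function theorem, and value-function continuity can genuinely fail. Assumption~\ref{assmp:LICQ} is precisely what unlocks this construction, and the perturbation argument becomes routine once $p$ is known to be $C^1$ everywhere, as supplied by Lemma~\ref{prop:diff-Phi}. Combining the two semicontinuity bounds yields $\lim_{m\to\infty} Z^\star(\theta_m) = Z^\star(\theta)$, proving continuity of $Z^\star$ at $\theta$.
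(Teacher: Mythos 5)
Your proof is correct in substance but takes a genuinely different route from the paper. The paper does not prove the two semicontinuities directly: it verifies that the Mangasarian--Fromovitz constraint qualification holds at every optimal solution (deriving it from Assumption~\ref{assmp:LICQ} via the KKT system and Gordan's theorem), and then invokes \cite[Theorem~2.6]{gauvin1977differential} as a black box, together with the uniform compactness from Lemma~\ref{lem:uniform-compactness} and the smoothness of $p$ from Lemma~\ref{prop:diff-Phi}, to conclude continuity of $Z^\star$. You instead re-derive the relevant content of that theorem: your lower-semicontinuity argument is the standard ``uniform compactness plus closedness of the feasible-set map'' step, and your upper-semicontinuity argument uses LICQ and the implicit function theorem to build a feasible perturbation curve $\bx(\theta')$ through $\bx^\star$ --- which is essentially how inner semicontinuity of $P(\cdot)$ is established under a constraint qualification. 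The paper's route is shorter because it outsources the analysis to the cited theorem (needing only MFCQ, which LICQ implies directly, making the Gordan detour arguably redundant); your route is longer but self-contained and makes explicit where LICQ is actually used. One small point to tighten: in the lower-semicontinuity step you should first pass to a subsequence of $\{\bx_m\}$ along which $\bc^\top \bx_m$ converges to $\liminf_m Z^\star(\theta_m)$ and only then extract a convergent sub-subsequence via uniform compactness; as written, the inequality $\liminf_m \bc^\top \bx_m \geq \bc^\top \bar{\bx}$ does not hold for an arbitrary convergent subsequence.
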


\begin{proof}
By Lemma~\ref{lem:uniform-compactness}, $P(\theta)$ is uniformly compact near $\theta$. Additionally, Lemma~\ref{prop:diff-Phi} ensures that $p(\bx)$ is continuously differentiable on $\mathbb{R}^n$. The KKT condition then implies that for dual variables $\bu(\theta), \rho(\theta) \geq 0$, and $\bv(\theta)$,
\vspace{-10pt}
\begin{align*}
\bc - \bH^\top \bv(\theta) - \bA^\top \bu(\theta) - \rho(\theta) \nabla p(\bx^*(\theta)) = \bm{0}, \;\; \bu_i(\theta) (\bA_i \bx^*(\theta) - \bd_i) = 0 .
\end{align*} 
 Let $\bB \bx^*(\theta) = \bd_{\bB}$ be the set of binding linear inequality constraints at an optimal solution, $\bx^*(\theta)$.  Since by Assumption~\ref{assmp:LICQ}, linear independence constraint qualification (LICQ) holds at $\bx^*(\theta)$,  there do not exist non-zero dual variables $ \bu(\theta), \rho(\theta) \geq 0$ and $\bv(\theta)$ such that 
\vspace{-10pt}
\begin{align*}
    \bH^\top \bv(\theta) + \bB^\top \bu(\theta) + \rho(\theta) \nabla p(\bx^*(\theta)) = \bm{0}. 
\end{align*}
Gordan's theorem \cite[Theorem~2.4.9]{bazaraa2006nonlinear} then implies the existence of $\mathfrak{z}$ such that
\vspace{-10pt}
\begin{align*}
\bB \mathfrak{z} > \bm{0}, \qquad \nabla p(\bx^*(\theta))^\top \mathfrak{z} > 0.
\end{align*}
Thus, the Mangasarian-Fromovitz constraint qualification conditions hold at $\bx^*(\theta)$. Then,  \cite[Theorem~2.6]{gauvin1977differential}, implies that $Z^\star(\theta)$ is continuous at $\theta$. 
\end{proof}

The following result is a direct consequence of the above lemma. 

\begin{definition}
    A solution $\hat{\bx}$ is called $\hat{\tau}-$feasible to problem~\eqref{eq:ccp} if $\bH \hat{\bx} = \bh, \bA \hat{\bx} \geq \bd$ and $p(\hat{\bx}) \geq \theta - \hat{\tau}$. A solution $\hat{\bx}$ is called $\hat{\tau}-$optimal if it is $\hat{\tau}-$feasible and  $|\bc^\top \bx^*(\theta) - \bc^\top \hat{\bx} | \leq \hat{\tau}$.
\end{definition}

\begin{theorem}\label{thm:convergence-outer}
Let $\bx^*(\theta)$ be an optimal solution of~\eqref{eq:ccp}. Suppose Assumptions~\ref{assmp:positive-definiteness}-\ref{assmp:LICQ} hold, and we are given a user-specified tolerance $\hat{\tau}$. Then, there exists $\tau$ satisfying $\hat{\tau} \geq \tau > 0$, such that the outer and inner approximation problems~\eqref{eq:ccp_pwl-outer} and \eqref{eq:ccp_pwl-inner} 
generated using $O(\sqrt{\frac{1}{\tau}\log(\frac{1}{\tau})})$ breakpoints have optimal solutions $\bx^\mathrm{OA}$ and $\bx^\mathrm{IA}$ that are 
$\hat{\tau}$-optimal. Specifically,
\vspace{-5pt}
\begin{align*}
    |\bc^\top \bx^\mathrm{IA} - \bc^\top \bx^\mathrm{OA}| \leq |\bc^\top \bx^*(\theta+\tau) - \bc^\top \bx^*(\theta-\tau) | \leq \hat{\tau}.
\end{align*}
\end{theorem}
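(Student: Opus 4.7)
The strategy is to sandwich the optimal values of the PWL-O and PWL-I models between $Z^\star(\theta-\tau)$ and $Z^\star(\theta+\tau)$ via set inclusions, and then invoke the continuity of $Z^\star$ at $\theta$ from Lemma~\ref{lem:finite-convergence} to close the gap. I would fix $\tau \in (0, \min\{\hat\tau,\rho\}]$, with $\rho$ from Assumption~\ref{assmp:non-empty-interior}, and build the breakpoint array using Algorithms~\ref{alg:breakpoint_find}--\ref{alg:breakpoint_find_negative} at accuracy $\tau$. By Theorem~\ref{thm:complexity-breakpoint} this requires $O(\sqrt{(1/\tau)\log(1/\tau)})$ breakpoints and ensures $0 \leq \bar{\Phi}(z;\bzz)-\Phi(z) \leq \tau$ and $0 \leq \Phi(z)-\underline{\Phi}(z;\bzz) \leq \tau$ for every $z \in \mathbb{R}$. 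Aggregating these bounds against the mixture weights (which sum to one) immediately yields the pointwise constraint-level bounds $|\bar p(\bx) - p(\bx)| \leq \tau$ and $|p(\bx) - \underline p(\bx)| \leq \tau$, where $\bar p$ and $\underline p$ are obtained by replacing $\Phi$ with $\bar\Phi$ and $\underline\Phi$ in~\eqref{eq:cc_p_x_definition}.

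The second step is to derive the nested inclusions $P(\theta) \subseteq \bar P(\theta) \subseteq P(\theta-\tau)$ and $P(\theta+\tau) \subseteq \underline P(\theta) \subseteq P(\theta)$, where $\bar P(\theta)$ and $\underline P(\theta)$ denote the projections onto $\bx$ of the feasible sets of~\eqref{eq:ccp_pwl-outer} and~\eqref{eq:ccp_pwl-inner}. The outer chain is immediate: $\bar p \geq p$ gives the first inclusion, and $\bar p(\bx) \geq \theta$ forces $p(\bx) \geq \bar p(\bx) - \tau \geq \theta-\tau$, giving the second; the inner chain is symmetric. Minimizing $\bc^\top \bx$ over these nested sets produces
\begin{equation*}
Z^\star(\theta-\tau) \leq \bc^\top \bx^{\mathrm{OA}} \leq Z^\star(\theta) \leq \bc^\top \bx^{\mathrm{IA}} \leq Z^\star(\theta+\tau).
\end{equation*}
Feasibility of $\bx^{\mathrm{OA}}$ in $P(\theta-\tau)$ gives $p(\bx^{\mathrm{OA}}) \geq \theta - \tau \geq \theta - \hat\tau$, and $\bx^{\mathrm{IA}} \in P(\theta)$ gives $p(\bx^{\mathrm{IA}}) \geq \theta$, so both solutions are $\hat\tau$-feasible. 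Note that the inclusion $P(\theta+\tau) \subseteq \underline P(\theta)$, together with the nonemptiness of $P(\theta+\tau)$ guaranteed by Assumption~\ref{assmp:non-empty-interior} (valid since $\tau \leq \rho$), certifies that the inner approximation model is itself feasible.

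To finish, since $Z^\star$ is continuous at $\theta$ by Lemma~\ref{lem:finite-convergence}, I can shrink $\tau$ further (still keeping $\tau \leq \hat\tau$ and $\tau \leq \rho$) so that $Z^\star(\theta+\tau)-Z^\star(\theta-\tau) \leq \hat\tau$. Combined with the sandwich above, this delivers both
\begin{equation*}
|\bc^\top \bx^{\mathrm{IA}} - \bc^\top \bx^{\mathrm{OA}}| \leq Z^\star(\theta+\tau)-Z^\star(\theta-\tau) \leq \hat\tau
\end{equation*}
and $|\bc^\top \bx^\star(\theta)-\bc^\top \bx^{\mathrm{OA}}|,\,|\bc^\top \bx^\star(\theta)-\bc^\top \bx^{\mathrm{IA}}| \leq \hat\tau$, establishing $\hat\tau$-optimality of both solutions. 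The main obstacle in executing this plan is ensuring the pointwise PWL accuracy holds uniformly in $z$, including arguments outside $[\zz_{-L}, \zz_R]$; this is precisely why $\bar\Phi$ and $\underline\Phi$ are defined with tail caps and why Theorem~\ref{thm:complexity-breakpoint} requires $\Phi(\zz_{-L}) \leq \tau$ and $1-\Phi(\zz_R) \leq \tau$. A secondary care-point is that the existence of a valid $\tau$ relies on the joint continuity/uniform-compactness machinery of Lemmas~\ref{lem:uniform-compactness}--\ref{lem:finite-convergence}, which in turn depends on Assumptions~\ref{assmp:positive-definiteness}--\ref{assmp:LICQ}.
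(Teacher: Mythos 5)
Your proposal is correct and follows essentially the same route as the paper's proof: establishing the set inclusions $P(\theta) \subseteq \bar{P}(\theta,\tau) \subseteq P(\theta-\tau)$ (and the symmetric inner chain) via the $\tau$-accuracy of the PWL approximations from Theorem~\ref{thm:complexity-breakpoint}, then invoking the continuity of $Z^\star$ at $\theta$ from Lemma~\ref{lem:finite-convergence} to choose $\tau$ small enough that $Z^\star(\theta+\tau)-Z^\star(\theta-\tau)\leq\hat{\tau}$. You spell out a few steps the paper leaves implicit (the explicit sandwich of objective values and the feasibility of the inner model via Assumption~\ref{assmp:non-empty-interior}), but the decomposition and the key lemmas are the same.
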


\begin{proof}
To show $\hat{\tau}-$ feasibility, let us denote the feasible set corresponding to the PWL-O formulation~\eqref{eq:ccp_pwl-outer} with $\tau-$accuracy as $\bar{P}(\theta, \tau)$. Then from Definition~\eqref{def: P_theta} and Theorem~\ref{thm:complexity-breakpoint} we have $P(\theta) \subseteq \bar{P}(\theta, \tau) \subseteq P(\theta - \tau)$.
Thus $\hat{\tau}-$feasibility follows since $\hat{\tau} \geq \tau$. 

To prove $\hat{\tau}-$optimality, using the continuity of $Z^\star(\theta)$ at $\theta$ from Lemma~\ref{lem:finite-convergence}, for every $\varepsilon > 0$, $\exists \; \tau^+(\varepsilon) >0$ such that $|\theta - \theta'| \leq \tau^+(\varepsilon)$ implies $|Z^\star(\theta) - Z^\star(\theta')| \leq \varepsilon$.  Let $\theta' \in [\theta-\tau, \theta + \tau]$ for some $\tau > 0$. Then, given the user-specified $\hat{\tau}$ we have a $\tau^+(\hat{\tau})$ for which $|Z^\star(\theta) - Z^\star(\theta')| \leq \hat{\tau}/2$ is satisfied for all sufficiently small $\hat{\tau} > 0$, with $\theta'$ satisfying $|\theta - \theta'| \leq \tau^+(\hat{\tau}/2) = \tau$. Now $|Z^\star(\theta + \tau) - Z^\star(\theta - \tau)| \leq |Z^\star(\theta + \tau) - Z^\star(\theta)| + |Z^\star(\theta) - Z^\star(\theta - \tau)| \leq \hat{\tau}$. The claim follows because $|\bc^\top \bx^\mathrm{IA} - \bc^\top \bx^\mathrm{OA}| \leq |Z^\star(\theta + \tau) - Z^\star(\theta - \tau)|$.
\end{proof}
An immediate consequence of Theorem~\ref{thm:convergence-outer} is that for any $\hat{\tau}$, there exists a $\tau > 0$ satisfying
$|\bc^\top \bx^*(\theta) - \bc^\top \bx^\mathrm{OA}| \leq \hat{\tau}$ and $|\bc^\top \bx^\mathrm{IA} - \bc^\top \bx^*(\theta)| \leq \hat{\tau}$. Although this is only an existence result, we note that we in practice can follow an iterative procedure to achieve the desired user-specified tolerance in the optimal objective value. In this procedure, we start with an initial value of $\tau$, and solve the outer and inner approximation problems. If the optimality gap determined from their solutions do not satisfy the desired tolerance $\hat{\tau}$, we reduce $\tau$ by a suitable factor (e.g., by a factor of 2) and update the previously generated break points. We can stop this procedure once the desired tolerance is satisfied.

\section{Numerical Experience with a Commercial Solver}\label{sec:numerical}
This section studies the computational performance of the PWL-I and PWL-O approximations, and the sample average approximation (SAA). Our discussion is based on extensive computational experiments conducted across a range of data generation parameters for the problems of different dimensions. We use Gurobi \cite{gurobi} to solve the instances. We describe problem generation in Section~\ref{sec: data-generation} and computational setup in Section~\ref{sec: exp-setup}. The break point selection in the PWL approximations is described in Section~\ref{sec:algorithm1_description}.

\subsection{Data Generation}\label{sec: data-generation}
We randomly generated problems of 100, 500, and 1000 dimensions, i.e., \(n \in \{100, 500, 1000\}\).  We set \(\theta \in \{0.95, 0.99, 0.999\}\) for chance constraint satisfaction probability. The value of $\theta = 0.999$ indicates a solution with a very high satisfaction probability. The number of Gaussian mixture components used was set at \(K \in \{5, 10, 15\}\). For the results reported in the main body of this section, the mixture weights were \(\bw = (0.05, 0.1, 0.2, 0.3, 0.35)\) for \(K=5\), \(\bw = (0.001,0.009,0.02,0.05, 0.08,0.09, 0.1,  0.15,0.2,0.3)\) for \(K=10\) and \(\bw = (0.001, 0.005,0.009, 0.01, 0.01,0.015, 0.02, 0.05,  0.08, 0.09,0.1,0.12, 0.13,0.17,0.19)\) for \(K=15\).  These weights are chosen to have both small and large values while ensuring that they sum to $1.0$.  We also conducted experiments using equal mixture weights. These results are reported in Appendix~\ref{appndx: Ommtd-Table}.

Mixture component mean vectors and covariance matrices were generated using component-mean and variance control hyper-parameters, denoted by $\varrho$ and $\zeta$, respectively. We used six different values of \(\varrho \in \{2, 5, 10, 15, 20, 25\}\). For each value of $\varrho > 0$, we randomly generated interval bounds 
$[\underline{\boldsymbol{\mu}}_k, \overline{\boldsymbol{\mu}}_k] 
\subset [0, \varrho \sqrt{n} \log n]^n$ 
for every $k \in [K]$. 
Given these bounds, each component $\mu_{k i}$ of $\boldsymbol{\mu}_k = (\mu_{k1}, \ldots, \mu_{kn})^\top$ 
is sampled as 
$\mu_{k i} \sim \mathrm{Unif}([\underline{\mu}_{k i},\, \overline{\mu}_{k i}])$ 
for all $i \in [n]$. Similarly, for covariance matrices, five different values of variance-control hyperparameter \(\varsigma \in \{2, 5, 10, 15, 20\}\) were considered. Eigenvalues \(\bs{\nu}_k, k \in [K]\) are uniformly sampled from \((0, \varsigma \ell_k]\), where \(\ell_k \in [0, 1]\) was chosen through quasi-Monte-Carlo (QMC) sampling. QMC sampling ensures that $\ell_k$ is not clustered. Given a $\ell_k$, we finally constructed \(\bs{\Sigma}_k \in \mathbb{S}^N_{++}\) for all $k \in [K]$ from an \(n\)-dimensional diagonal matrix of eigenvalues $\bD_k$, where each entry was uniformly sampled from $[0, \varsigma \ell_k]$. The eigenvector matrices $\bQ_k$ are randomly generated orthogonal matrices. They were generated by taking $2 n$ products of randomly generated $2 \times 2$ Givens rotation matrices \cite{givens1958computation}. The generated covariance matrix $\bSigma_k = \bQ^\top_k \bD_k \bQ_k$ is positive definite.

We generated samples for the SAA approach as follows. We used the scikit-learn (\texttt{sklearn}) package in Python~\cite{scikit-learn}. In particular, we used the \texttt{GaussianMixture} class from \texttt{sklearn.mixture}, initializing it with our Gaussian mixture parameters and then calling its \texttt{sample} method. Generated samples are then used in the general SAA model from \cite{luedtke2008sample}: 
\vspace{-0.5em}
\begin{align}\label{prob:SAA-GMM}
\min_{\bx \in \mathcal{X}, \by \in \{0, 1\}^{S}} \quad \bc^\top \bx \quad 
\text{s.t.}  \quad \bxi^s \bx - b \leq  M y^s  \quad \forall s \in [S] \qquad \sum_{s = 1}^{S} y^s \leq (1 - \theta) \; S, \,\, \tag{GMM-SAA}
\end{align}
where the binary vector $\by$ is of the same dimension as the sample size $S$. Big-$M$ value was set to $1e^6$.  Sample size \(S\) depends on  \(\theta\): for \(\theta = 0.95\) and \(\theta = 0.99\), we set \(S = 100/(1-\theta)\). For \(\theta = 0.999\), we set \(S = 20/(1-\theta)\), since generating problems with a large number of scenarios was not practical for $\theta = 0.999$. 

To generate the right-hand side \(b\) in constraint~\eqref{eq:cc}, we sampled 1000 decision vectors \(\bx\) from its domain. We then computed \(b\) as the average of \(\bx^\top \bmu_k + \sqrt{\bx^\top \bSigma_k \bx}\) over all \(k\) and sampled decisions \(\bx\). One standard deviation of \(\bx^\top \bmu_k\) (i.e., \(\bx^\top \bmu_k + \sqrt{\bx^\top \bSigma_k \bx}\)) was used to prevent an excessive proportion of infeasible instances. We did not use linear equality constraints and generated inequality constraints as follows:  $\bA$ and $\bd$ are randomly generated as \(\bA \stackrel{\mathcal{U}}{\sim} [0, 1]\) and \(\bd \stackrel{\mathcal{U}}{\sim} [n/4, 3N/4]\). For \(n = \{100, 500\}\) we specify the dimension of vector \(\bd\) as  \(n/10\); for \(n=1000\) we reduce this dimension to \(n/20\) to avoid too many infeasible instances. 

\subsection{Experimental Setup}\label{sec: exp-setup}
All computations were performed on a server with Intel Xeon 2.80 GHz CPUs. We used Python 3.9 and Gurobi 11.0.3. We set $(1-\theta)/10$ as the \texttt{optimality} \texttt{(MIP)} gap in Gurobi. We allow Gurobi to use up to 2 cores per instance with other parameters set at their default values. A 18-hour (64,800s)-time limit is set. Let the solution from PWL-I, PWI-O, and SAA models within the time-limit be $\check{\bx}^{I}, \check{\bx}^{O},$ and $\check{\bx}^S$. We substitute these solutions into~\eqref{eq:cc_probability_standardized_form}  to compute chance constraint probability satisfied  by them. These probabilities are denoted by  $\check{\theta}^I, \check{\theta}^O$ and $\check{\theta}^S$, respectively.


\subsection{Break point selection and number of pieces in Piecewise Linear Approximation}\label{sec:algorithm1_description}
We set $\tau = (1-\theta)/10$ in specifying piecewise linear approximation accuracy. For this value of $\tau$, we use Algorithm~\ref{alg:breakpoint_find}-\ref{alg:breakpoint_find_negative} to adaptively construct the breakpoint-array $\bzz$. We fix breakpoint-array endpoints for all $\tau$ by setting \(\zz_L = -6.466\) and \(\zz_R = 6.466\). It meets the requirement of Theorems~\ref{thm:complexity-breakpoint} and~\ref{thm:complexity-breakpoint-inner}$: 
|\bar{\Phi}(\zz_L) - \Phi(\zz_L)| = |\bar{\Phi}(\zz_R) - \Phi(\zz_R)| < 1 \times e^{-10}$ (for PWL-O), and
$|\underline{\Phi}(\zz_L) - \Phi(\zz_L)| = |\underline{\Phi}(\zz_R) - \Phi(\zz_R)| < 1 \times e^{-10}$ (for PWL-I).

\subsection{Computational Results and Discussion} \label{sec:ComputationalResults}

We present detailed numerical results for $\varrho = 2, \varsigma = 2$ and $\varrho = 5, \varsigma = 10$ with unequal mixture weight setting in Tables~\ref{tab:num-performance-2-2-diff}-\ref{tab:num-performance-2-5-diff}, respectively. 
Results for all the remaining parameter combinations of $(\varrho, \varsigma)$ with unequal weights are summarized in Table~\ref{tab:num-performance-sum-diff}. Similar results with equal mixture weights are presented in Appendix~\ref{appndx: Ommtd-Table}.


\scriptsize
\vspace{-10pt}
\begin{longtable}[!htbp]{c | r| r c c| c c c c| c c c}
\caption{\centering Comparison of PWL-I, PWL-O, and SAA approaches for $\varrho = 2, \varsigma=2$ with different mixture weights and 64,800s (18-hour) time limit} \label{tab:num-performance-2-2-diff} \\ [-10pt]
\hline
\multicolumn{2}{c|}{} & \multicolumn{3}{c|}{PWL-I} & \multicolumn{4}{c|}{PWL-O} & \multicolumn{3}{c}{SAA} \\ \hline
\multicolumn{1}{c|}{$K$} 
& \multicolumn{1}{c|}{$\theta$} 
& \multicolumn{1}{c|}{\makecell{Time \\ (sec)}} 
& \multicolumn{1}{c|}{Obj\specialfootnote{*}{Objective values are reported after truncating decimal digits}} 
& \multicolumn{1}{c|}{${\check{\theta}}^I$} 
& \multicolumn{1}{c|}{\makecell{Time \\ (sec)}} 
& \multicolumn{1}{c|}{\%- Obj}
& \multicolumn{1}{c|}{${\check{\theta}}^O$} 
& \multicolumn{1}{c|}{\%${\check{\theta}}$}
& \multicolumn{1}{c|}{\makecell{Time \\ (sec)}}  
& \multicolumn{1}{c|}{Obj\starfootnote} 
& \multicolumn{1}{c}{${\check{\theta}}^S$} \\ 
\hline

\multicolumn{12}{c}{$n = 100$} \\ \hline
\multirow{3}{*}{5} & 0.95  & 0.7 & -735 & 0.9500 & 0.4 & 0.0000 & 0.9499 & 0.011 & 0.5609 & -844 & 0.9565 \\ 
& 0.99  & 1.2 & -735 & 0.9900 & 0.9 & 0.0002 & 0.9899 & 0.010 & 215.58 & -844 & 0.9452 \\ 
& 0.999 & 5.0 & -734 & 0.9990 & 5.3 & 0.0005 & 0.9990 & 0.000 & 338.77 & -844 & 0.9555 \\ \hline
\multirow{3}{*}{10} & 0.95  & 2.2 & -727 & 0.9500 & 2.0 & 0.0002 & 0.9498 & 0.021 & 64,800 & -759 & 0.8636 \\ 
& 0.99  & 3.4 & -726 & 0.9900 & 2.7 & 0.0001 & 0.9900 & 0.000 & 64,800 & -743 & 0.9186 \\ 
& 0.999 & 90 & -725 & 0.9990 & 57.1 & 0.0001 & 0.9990 & 0.000 & 23,893 & -730 & 0.9799 \\ \hline
\multirow{3}{*}{15} & 0.95  & 3.5 & -704 & 0.9504 & 2.9 & 0.3555 & 0.9499 & 0.06 & 23,089 & -730 & 0.8790 \\ 
& 0.99  & 2.6 & -706 & 0.9900 & 3.1 & 0.0006 & 0.9900 & 0.000 & 64,800 & -707 & 0.9573 \\ 
& 0.999 & 54 & -705 & 0.9990 & 55.2 & 0.0000 & 0.9990 & 0.000 & 64,805 & -706 & 0.9799 \\ \hline
\multicolumn{12}{c}{$n=500$} \\ \hline
\multirow{3}{*}{5} & 0.95  & 111.9 & -2,454 & 0.9758 & 109.7 & 0.0000 & 0.9995\specialfootnote{†}{For these instances, chance constraint appears inactive at target satisfaction probability}& - & 5,238.8 & -2,474 & 0.5791 \\ 
& 0.99  & 157.6 & -2,454 & 0.9901 & 199.2 & 0.0002 & 0.9900 & 0.010 & 64,800 & -2,473 & 0.5307 \\ 
& 0.999 & 201.4 & -2,453 & 0.9990 & 227.7 & 0.0001 & 0.9990 & 0.000 & 64,801 & -2,473 & 0.6910 \\ \hline
\multirow{3}{*}{10} & 0.95  & 391.1 & -1,812 & 0.9500 & 329.0 & 0.0003 & 0.9498 & 0.021 & 64,800 & -1,857 & 0.6029 \\ 
& 0.99  & 252.0 & -1,810 & 0.9900 & 251.6 & 0.0002 & 0.9900 & 0.000 & 64,801 & -1,812 & 0.7697 \\ 
& 0.999 & 498.7 & -1,809 & 0.9990 & 2,225.8 & 0.0001 & 0.9990 & 0.000 & 64,801 & -1,812 & 0.8628 \\ \hline
\multirow{3}{*}{15} & 0.95  & 578.4 & -2,338 & 0.9501 & 658.4 & 0.0000 & 0.9496 & 0.053 & 64,800 & -2,665 & 0.6546 \\ 
& 0.99  & 748.4 & -2,338 & 0.9900 & 686.6 & 0.0004 & 0.9899 & 0.010 & 64,800 & -2,411 & 0.8926 \\ 
& 0.999 & 1,556  & -2,337 & 0.9990 & 6,116  & 0.0000 & 0.9990 & 0.000 & 64,801 & -2,338 & 0.9018 \\ \hline
\multicolumn{12}{c}{$n=1000$} \\ \hline
\multirow{3}{*}{5} & 0.95  & 839.8 & -7,794 & 0.9510 & 823.2 & 0.0001 & 0.9499 & 0.116 & 64,800 & -7,794 & 0.3706 \\ 
& 0.99  & 958.9 & -7,794 & 0.9901 & 760.5 & 0.0000 & 0.9900 & 0.010 & 64,800 & -7,794 & 0.7984 \\ 
& 0.999 & 798.4 & -7,793 & 0.9990 & 796.1 & 0.0000 & 0.9990 & 0.000 & 64,809 & -7,794 & 0.8662 \\ \hline
\multirow{3}{*}{10} & 0.95  & 3,343 & -5,810 & 0.9500 & 2,947 & 0.0000 & 0.9496 & 0.053 & 64,800 & -5,824 & 0.4744 \\ 
& 0.99  & 2,851 & -5,810 & 0.9900 & 2,363 & 0.0001 & 0.9899 & 0.010 & 64,800 & -5,819 & 0.5889 \\ 
& 0.999 & 8,008 & -5,809 & 0.9990 & 6,312 & 0.0000 & 0.9990 & 0.000 & 64,805 & -5,810 & 0.8211 \\ \hline
\multirow{3}{*}{15} & 0.95  & 3,069 & -4,349 & 0.9506 & 4,275 & 0.0000 & 0.9498 & 0.105 & 64,800 & -4,592 & 0.6923 \\ 
& 0.99  & 2,774 & -4,349 & 0.9900 & 4,587 & 0.0000 & 0.9899 & 0.010 & 64,801 & -4,445 & 0.8392 \\ 
& 0.999 & 2,836 & -4,348 & 0.9990 & 3,797 & 0.0000 & 0.9990 & 0.000 & 64,838 & -4,349 & 0.9049 \\ \hline
\end{longtable}

\vspace{-20pt}
\begin{longtable}{c | r| r c c| c c c c| c c c}
\caption{\centering PWL-I, PWL-O vs SAA for $\varrho = 2, \varsigma=5$ with different mixture weights} \label{tab:num-performance-2-5-diff} \\ 
\hline
\hline
\multicolumn{1}{c|}{$K$} 
& \multicolumn{1}{c|}{$\theta$} 
& \multicolumn{1}{c|}{\makecell{Time \\ (sec)}} 
& \multicolumn{1}{c|}{Obj\starfootnote} 
& \multicolumn{1}{c|}{${\check{\theta}}^I$} 
& \multicolumn{1}{c|}{\makecell{Time \\ (sec)}} 
& \multicolumn{1}{c|}{\%- Obj} 
& \multicolumn{1}{c|}{${\check{\theta}}^O$} 
& \multicolumn{1}{c|}{\%${\check{\theta}}$} 
& \multicolumn{1}{c|}{\makecell{Time \\ (sec)}}  
& \multicolumn{1}{c|}{Obj\starfootnote} 
& \multicolumn{1}{c}{${\check{\theta}}^S$} \\ \hline
\endfirsthead
\hline
\multicolumn{1}{c|}{$K$} 
& \multicolumn{1}{c|}{$\theta$} 
& \multicolumn{1}{c|}{\makecell{Time \\ (sec)}} 
& \multicolumn{1}{c|}{Obj} 
& \multicolumn{1}{c|}{${\check{\theta}}^I$} 
& \multicolumn{1}{c|}{\makecell{Time \\ (sec)}}  
& \multicolumn{1}{c|}{\%- Obj} 
& \multicolumn{1}{c|}{${\check{\theta}}^O$} 
& \multicolumn{1}{c|}{\%${\check{\theta}}$} 
& \multicolumn{1}{c|}{\makecell{Time \\ (sec)}}  
& \multicolumn{1}{c|}{Obj} 
& \multicolumn{1}{c}{${\check{\theta}}^S$} \\ \hline
\endhead

\multicolumn{12}{r}{\textit{(continued on next page)}}\\
\endfoot

\endlastfoot
\multicolumn{12}{c}{$n = 100$} \\ \hline
\multirow{3}{*}{5}  & 0.95  & 0.53 & -988 & 0.9500 & 0.32  & 0.0000 & 0.9447 & 0.560 & 0.341 & -894 & 1.000 \\ 
 & 0.99  & 0.36 & -988 & 0.9900 & 0.476 & 0.0000 & 0.9899 & 0.010 & 3.504 & -894 & 1.000 \\ 
 & 0.999 & 0.68 & -988 & 0.9990 & 0.76  & 0.0001 & 0.9990 & 0.000 & 11.25 & -894 & 1.000 \\ \hline
\multirow{3}{*}{10} & 0.95  & 2.18 & -891 & 0.9500 & 1.37  & 0.0000 & 0.9499 & 0.011 & 1,978.9 & -894 & 0.9458 \\ 
 & 0.99  & 1.82 & -890 & 0.9900 & 4.02  & 0.0000 & 0.9889 & 0.111 & 64,853 & -892 & 0.9882 \\ 
 & 0.999 & 14.3 & -890 & 0.9995 & 60.25 & 0.0190 & 0.9989 & 0.060 & 19,829 & -890 & 0.9970 \\ \hline
\multirow{3}{*}{15} & 0.95  & 3.39 & -928 & 0.9500 & 3.65  & 0.0003 & 0.9499 & 0.011 & 30.99 & -981 & 0.9434 \\ 
 & 0.99  & 11.8 & -926 & 0.9900 & 25.1  & 0.0000 & 0.9899 & 0.010 & 36,789 & -976 & 0.9857 \\ 
 & 0.999 & 58.91 & -922 & 0.9990 & 69.24 & 0.0001 & 0.9989 & 0.010 & 27,442 & -924 & 0.9922 \\ \hline
\multicolumn{12}{c}{$n=500$} \\ \hline
\multirow{3}{*}{5}  & 0.95  & 115 & -2,344 & 0.9673 & 117.4 & 0.0010 & 0.9664\daggerfootnote{} & - & 928.68 & -2,334 & 0.7964 \\ 
 & 0.99  & 166.2 & -2,343 & 0.9900 & 213.8 & 0.0005 & 0.9899 & 0.010 & 64,803  & -2,332 & 0.8411 \\ 
 & 0.999 & 292.6 & -2,342 & 0.9990 & 268.3 & 0.0002 & 0.9989 & 0.010 & 64,801  & -2,330 & 0.8527 \\ \hline
\multirow{3}{*}{10} & 0.95  & 410.4 & -2,500 & 0.9502 & 235.6 & 0.0002 & 0.9498 & 0.042 & 64,800  & -2,642 & 0.9194 \\ 
 & 0.99  & 267.7 & -2,499 & 0.9900 & 257.5 & 0.0003 & 0.9899 & 0.010 & 64,801  & -2,639 & 0.9826 \\ 
 & 0.999 & 14,400 & -2,498 & 0.9990 & 2,460  & 0.0002 & 0.9989 & 0.010 & 25,398  & -2,593 & 0.9817 \\ \hline
\multirow{3}{*}{15} & 0.95  & 737.7 & -2,188 & 0.9501 & 779.5 & 0.0001 & 0.9499 & 0.021 & 64,800  & -2,421 & 0.6018 \\ 
 & 0.99  & 633.6 & -2,187 & 0.9900 & 818.6 & 0.0006 & 0.9899 & 0.010 & 64,800  & -2,207 & 0.9324 \\ 
 & 0.999 & 4,694 & -2,185 & 0.9990 & 15,318 & 0.0001 & 0.9989 & 0.010 & 64,804  & -2,186 & 0.9427 \\ \hline
\multicolumn{12}{c}{$n=1000$} \\ \hline
\multirow{3}{*}{5}  & 0.95  & 858 & -4,556 & 0.9509 & 930   & 0.0004 & 0.9498 & 0.116 & 64,800  & -4,558 & 0.4818 \\ 
 & 0.99  & 1,176 & -4,554 & 0.9900 & 968.5 & 0.0001 & 0.9899 & 0.010 & 64,801  & -4,556 & 0.7318 \\ 
 & 0.999 & 948  & -4,553 & 0.9990 & 1,007  & 0.0001 & 0.9989 & 0.010 & 64,806  & -4,554 & 0.9052 \\ \hline
\multirow{3}{*}{10} & 0.95  & 2,580 & -7,286 & 0.9500 & 2,596  & 0.0000 & 0.9495 & 0.053 & 64,800  & -7,599 & 0.5847 \\ 
 & 0.99  & 2,540 & -7,285 & 0.9900 & 2,126  & 0.0001 & 0.9899 & 0.010 & 64,800  & -7,557 & 0.8751 \\ 
 & 0.999 & 6,267 & -7,285 & 0.9990 & 4,117  & 0.0000 & 0.9989 & 0.010 & 64,801  & -7,285 & 0.9507 \\ \hline
\multirow{3}{*}{15} & 0.95  & 3,652 & -4,046 & 0.9500 & 3,747  & 0.0001 & 0.9497 & 0.032 & 64,800  & -4,365 & 0.7277 \\ 
 & 0.99  & 564.5 & -4,046 & 0.9900 & 2,769  & 0.0000 & 0.9899 & 0.010 & 64,800  & -4,183 & 0.8842 \\ 
 & 0.999 & 25,548 & -4,045 & 0.9990 & 2,933  & 0.0000 & 0.9989 & 0.010 & 64,801  & -4,046 & 0.9584 \\ \hline
\end{longtable}

\vspace{-20pt}
\begin{longtable}{c r|r r r|r|r r r |c c c}
\caption{\centering Computational performance summary of PWL-I, PWL-O and SAA approaches  with different mixture weights across hyperparameters} \label{tab:num-performance-sum-diff} \\ 
\hline
 & & \multicolumn{3}{c|}{\makecell{Average Time \\ (sec)}} & ${\check{\theta}^S}$ & \multicolumn{3}{c|}{\makecell{\% of Instances not \\ Solved to \\ Optimality Gap }} & \multicolumn{3}{c}{\makecell{Avg. ${\check{\theta}^I, \check{\theta}^O, \check{\theta}^S}$ for \\ Instances not \\ Solved to \\ Optimality Gap}} \\ \hline
$K$ & $\theta$ & PWL-I & PWL-O & SAA & SAA & PWL-I & PWL-O & SAA & PWL-I & PWL-O & SAA \\ \hline
\endfirsthead
\hline 
$K$ & $\theta$ & PWL-I & PWL-O & SAA & SAA & PWL-I & PWL-O & SAA & PWL-I & PWL-O & SAA \\ \hline
\endhead

\multicolumn{12}{r}{\textit{(continued on next page)}}\\
\endfoot

\endlastfoot
\multicolumn{12}{c}{$n = 100$} \\ \hline
\multirow{3}{*}{5}  & 0.95  & 0.6   & 0.5   & 0.4    & 0.9927 & 0 & 0 & 0     & - & - & - \\ 
                   & 0.99  & 0.9   & 0.6   & 52      & 0.9908 & 0 & 0 & 0     & - & - & - \\ 
                   & 0.999 & 2.2   & 2.1   & 72      & 0.9925 & 0 & 0 & 0     & - & - & - \\ \hline
\multirow{3}{*}{10} & 0.95  & 2.6   & 1.56  & 15,108   & 0.9313 & 0 & 0 & 16.7  & - & - & 0.86 \\ 
                   & 0.99  & 14.0  & 14.4  & 64,809   & 0.9698 & 0 & 0 & 100   & - & - & 0.96 \\ 
                   & 0.999 & 52.5  & 52.0  & 50,488   & 0.9895 & 0 & 0 & 66.7  & - & - & 0.99 \\ \hline
\multirow{3}{*}{15} & 0.95  & 51.3  & 2.6   & 26,046   & 0.9054 & 0 & 0 & 28.6  & - & - & 0.89 \\ 
                   & 0.99  & 46.3  & 40.5  & 50,059   & 0.9604 & 0 & 0 & 66.7  & - & - & 0.94 \\ 
                   & 0.999 & 178   & 198.8 & 55,462   & 0.9793 & 0 & 0 & 75.0  & - & - & 0.97 \\ \hline

\multicolumn{12}{c}{$n=500$} \\ \hline
\multirow{3}{*}{5}  & 0.95  & 123   & 133   & 38,760   & 0.4934 & 0 & 0 & 50.0  & - & - & 0.52 \\ 
                   & 0.99  & 144   & 183   & 64,802   & 0.4253 & 0 & 0 & 100   & - & - & 0.42 \\ 
                   & 0.999 & 732   & 655   & 64,801   & 0.5539 & 0 & 0 & 100   & - & - & 0.55 \\ \hline
\multirow{3}{*}{10} & 0.95  & 501   & 250   & 64,800   & 0.8916 & 0 & 0 & 100   & - & - & 0.89 \\ 
                   & 0.99  & 260   & 366   & 64,813   & 0.8410 & 0 & 0 & 100   & - & - & 0.84 \\ 
                   & 0.999 & 2,504  & 2,384  & 58,235   & 0.9267 & 0 & 0 & 83.3  & - & - & 0.92 \\ \hline
\multirow{3}{*}{15} & 0.95  & 6,298  & 5,327  & 58,910   & 0.7653 & 0 & 0 & 66.7  & - & - & 0.76 \\ 
                   & 0.99  & 6,835  & 10,636 & 64,800   & 0.9090 & 0 & 0 & 100   & - & - & 0.91 \\ 
                   & 0.999 & 11,958 & 21,322 & 64,802   & 0.9063 & 0 & 13.3 & 100 & - & 0.9989 & 0.91 \\ \hline

\multicolumn{12}{c}{$n=1000$} \\ \hline
\multirow{3}{*}{5}  & 0.95  & 679   & 742   & 40,204   & 0.6654 & 0 & 0 & 50.0  & - & - & 0.53 \\ 
                   & 0.99  & 996   & 830   & 64,807   & 0.8376 & 0 & 0 & 100   & - & - & 0.84 \\ 
                   & 0.999 & 2,112  & 2,360  & 64,804   & 0.8836 & 0 & 0 & 100   & - & - & 0.88 \\ \hline
\multirow{3}{*}{10} & 0.95  & 2,400  & 3,294  & 54,106   & 0.6501 & 0 & 0 & 83.3  & - & - & 0.65 \\ 
                   & 0.99  & 2,400  & 1,976  & 64,812   & 0.8077 & 0 & 0 & 100   & - & - & 0.81 \\ 
                   & 0.999 & 2,537  & 8,525  & 64,802   & 0.8911 & 0 & 3.3 & 100  & - & 0.9999 & 0.89 \\ \hline
\multirow{3}{*}{15} & 0.95  & 4,929  & 13,426 & 64,800   & 0.7053 & 20 & 16.7 & 100 & 0.9978 & 0.9989 & 0.70 \\ 
                   & 0.99  & 17,313 & 22,778 & 64,817   & 0.8828 & 30 & 26.7 & 100 & 0.9989 & 0.9991 & 0.88 \\ 
                   & 0.999 & 21,022 & 31,354 & 64,863   & 0.9310 & 33.3 & 43.3 & 100 & 0.9997 & 0.9998 & 0.93 \\ \hline

\end{longtable}

\vspace{-10pt}
\normalsize
Tables~\ref{tab:num-performance-2-2-diff}--\ref{tab:num-performance-2-5-diff} show the time required to solve the instances, objective value at termination, and the values of  ${\check{\theta}}^I, {\check{\theta}}^O, {\check{\theta}}^S$. For PWL-O, we also report $\%$-Obj\dollarfootnote{} and $\%\check{\theta}$\treefootnote{} respectively defined as:  $\max\big(\bc^\top (\check{\bx}^I - \check{\bx}^O), 0\big)/\bc^\top \check{\bx}^I \times 100$ and $(\check{\theta}^I - \check{\theta}^O)/\theta \times 100$.  The results in these tables show that SAA reaches the 18-hour limit for nearly all instances (even some with $K=5$ components), whereas PWL-I and PWL-O are able to close the optimality gap much faster. In particular, PWL-I and PWL-O instances take at most 60 seconds for $n=100$, and less than 6 and 9 hours for $n=500$, and $1000$, respectively. Runtimes of PWL-I and PWL-O grow with $n$ and $K$. Both approximations take a similar amount of time.

Both the reported values of ${\check{\theta}}^O$ and ${\check{\theta}}^I$ are within the approximation accuracy ($\tau$) of the  target value of $\theta$. The few exceptions highlighted by symbol\daggerfootnote{} indicate that the solution we obtain for them lies in the feasible interior of the chance constraint. Except for those few instances, chance constraint satisfaction $\check{\theta}^I$ from PWL-I, in general, is at least as large as the target satisfaction probability $\theta$ and satisfaction $\check{\theta}^O$ obtained from PWL-O. Such an observation is consistent with the statements of Lemmas~\ref{lem: epsilon-reltn-z-phi-outer} and~\ref{lem: epsilon-reltn-z-phi-inner} made for the outer and inner approximation models, respectively. In contrast, chance constraint satisfaction probability ${\check{\theta}}^S$ at the SAA solutions is below the target value frequently—sometimes markedly below—suggesting inadequacy of the number of samples in the sample average approximation. Finally, PWL-I and PWL-O achieve almost the same optimal objective, with PWL-O having an objective value that is slightly smaller (as expected since the feasible set from inner approximation is contained in that by outer approximation). SAA objective values, however, are typically much smaller than those of PWL-O. Such smaller values come at the expense of their poor chance constraint satisfaction probability.

\subsubsection{Discussion on the summary Table~\ref{tab:num-performance-sum-diff} }
The results for PWL-I, PWL-O, and SAA summarized in Tables~\ref{tab:num-performance-sum-diff} report average computation time across instances, percentage of instances failing to achieve the desired optimality gap within 18 hours, and $\check{\theta}^I, \check{\theta}^O, \check{\theta}^S$ for the unsolved instances. This summary also highlights the superior performance of the PWL-I and PWL-O approximations over SAA in terms of both computational efficiency and probability satisfied by their solutions.  The aggregate performance statistics are highlighted below:

\begin{itemize}[leftmargin=1.0em]
    \item \textbf{Average computation time-based comparison:} For SAA, approximately 60\% of instances reach the 18-hour time limit, and in 74\% of the cases the computation time exceeds 14 hours. On the other hand, in any of the cases, the average time reported for PWL-I and PWL-O never reaches the time limit. Additionally, no significant difference is observed in the average time required to solve PWL-I and PWL-O.

    \item \textbf{Comparison of failure rates in achieving desired optimality gap:} In \(55\%\) of the test settings, SAA fails to achieve the prescribed gap \((\frac{1-\theta}{10}) \) within the time limit. In the remaining \(45\%\) cases, many SAA instances with \(K=10\) or \(K=15\) also fail to achieve the desired gap. In contrast, PWL-I and PWL-O approximations almost always achieve the gap, except for \(n=1000\) and \(\;K=15\) instances. Even in these cases, the percentage of instances that fail to meet the prescribed gap stays below \(50\%\). Additionally, for $n=1000, K=10$ and $n=500, K=15$, fewer than $10\%$ of the PWL-O instances fail to meet the required gap, on average.
    
    \item \textbf{Comparison based on the satisfied chance constraint probability: }For PWL-I and PWL-O approximations instances that failed to meet the optimality gap, the reported average $\check{\theta}^I, \check{\theta}^O$ values are mostly observed to be greater than the target probability $\theta$. This indicates that even when the desired optimality gap is not attained, solutions we obtain at termination satisfy the required chance constraint probability on average. This however may come at the cost of increased objective value. 
    
    On the other hand, SAA-generated solutions yield $\check{\theta}^S$ significantly below the target $\theta$, even when averaged over those instances that meet the required optimality gap. Understandably, average $\check{\theta}^S$ values become worse when instances that fail to satisfy the optimality requirement are also incorporated (see last column of Table~\ref{tab:num-performance-sum}). Furthermore, in comparison to PWL models, worsening performance trends of SAA are more noticeable with increasing $n$. Even with $n=500$ and $K=5$, some SAA runs yield $\check{\theta}^S$ values 2.5 times below the target, and with $n=1000$, these values can be 3-fold smaller.  Interestingly, the $\check{\theta}^S$ values for SAA appear to improve with $K$.  
\end{itemize}

\subsubsection{Effect on solution time with number of breakpoints}
We conducted additional experiments to examine the effect of the number of breakpoints and linear pieces on computation time and optimality gap at termination for both PWL-I and PWL-O approximations. For \(n=500\) and \(n=1000\), we randomly selected about 10\% instances from those used in our previous experimentation settings. We, however, now set the approximation accuracy \(\tau\) to \((1-\theta)/2\) and \((1-\theta)/50\), which require respectively fewer and more breakpoints (and linear pieces) than the baseline value \((1-\theta)/10\). Tables~\ref{tab: breakpoint-effect-PWL-I}-\ref{tab: breakpoint-effect-PWL-O} and Tables~\ref{tab: breakpoint-effect-PWL-I-1000}-\ref{tab: breakpoint-effect-PWL-O-1000} report the findings respectively for $n=500$ and $n=1000$. The first four columns of these tables show the values of mean and variance control parameters \(\varrho\) and \(\varsigma\), the number of components \(K\), and the probability threshold \(\theta\). As expected, increasing accuracy requirement increases solution time for both inner and outer approximations, with the effect being more pronounced for \(n=1000\) than for \(n=500\). For \(n=500\), every instance is solved within the 18-hour limit for both error tolerances and all values of \(K\). In contrast, for \(n=1000\) cases, many instances fail to reach the desired optimality gap that was set in Gurobi. 
About 40\% of instances with $n=1000$ yield no feasible solution within 18 hours for \(\tau = (1-\theta)/50\). However, in several instances, the optimality gap at termination is almost identical for \((1-\theta)/2\) and \((1-\theta)/50\) cases, indicating that Gurobi cannot tighten the optimality gap further regardless of the approximation accuracy. Such observation is more noticeable for PWL-I than PWL-O approximation, indicating that outer approximation is easier to solve, especially when a finer approximation is used. The reasons are unclear. However, we hypothesize that it is because the feasible set of PWL-I approximation is more restricted, thus making it difficult for the MIP solver to identify feasible solutions.

\subsubsection{Solution time and mixture distribution}
To examine the effect of the number of Gaussian mixture components and their parameters on solution time, we perform linear regression with the following predictors:
\begin{itemize}[leftmargin=2cm,labelwidth=1.0cm,labelsep=0.3cm,align=left]
    \item[(i)] $n$ – problem dimension 
    \item[(ii)] $K$ – number of mixture components
    \item[(iii)] $\varrho$ – mean-control hyperparameter
    \item[(iv)] $\varsigma$ – variance-control hyperparameter
    \item[(v)] $\bar d_{\mu}$ – average of the distances between any two Gaussian component means
    \item[(vi)] $d^{\max}_{\mu}$ – maximum of the distances between any two Gaussian component means
    \item[(vii)] average value of eigenvalue ratio $\nu^{\max}/\nu^{\min}$ across mixture components
    \item[(viii)] maximum value of eigenvalue ratio $\nu^{\max}/\nu^{\min}$ across mixture components
\end{itemize}

Analyses are performed separately for different values of \(\theta\). Regression results are reported in Appendix~\ref{appndx:regression-table} for both PWL-I and PWL-O approximations. Predictably, solution time increases with both \( n \) and \( K \), with \( K \) showing a greater effect. Other observations are as follows:

\begin{itemize}
    \item For both PWL-I and PWL-O approximations, predictors $\varsigma, \Bar{d}_{\mu}$ and \( d^{\max}_{\mu} \) exhibit statistically significant influence on  solution time, particularly with larger \(\theta\). 
    \item  For both PWL-I and PWL-O approximations, solution time and optimality gap tend to increase with greater variance in the component distribution. Such effects become more pronounced at higher $\theta$ values. The influence of variance, as captured by the regression coefficient and statistical significance of $\varsigma$, is slightly stronger for PWL-O than PWL-I approximation.
    \item  For PWL-I approximation with $\theta = 0.99, 0.999$,  it is observed that the smaller the $\Bar{d}_{\mu}$ value (\emph{average} distance across all pairs), the faster it is to reach the desired optimality gap. However, the effect of $d^{\max}_{\mu}$ value (maximum distance across all pairs) appears to be opposite.
     \item Similar effect of $\Bar{d}_{\mu}$ and $d^{\max}_{\mu}$ are observed for PWL-O approximation but they are statistically significant only when $\theta = 0.999$. Additionally, corresponding regression coefficient values are observed to be smaller than those for PWL-I approximation.
\end{itemize}

\section{Conclusion}
We studied a linear chance-constrained problem with continuous decision variables, assuming that the data follows a GMM distribution. We related the approximation accuracy of $\Phi$ to a perturbation in chance constraint satisfaction probability $\theta$ and showed that the desired optimality gap can be ensured through controlling the approximation accuracy. An extensive computational study with thousands of instances gives a clear indication of the superior performance of piecewise-linear inner and outer approximation over the popular SAA approach, in terms of both computation time and the chance constraint probability satisfied by the solution of these problems.  
We believe that insights from our numerical findings could facilitate customized algorithm development for solving problems with multiple or joint GMM-based chance constraints.
Extending this study to discrete decision variables and studying distributionally robust chance constraints for mixture models, with unknown component parameters, weights, and counts, are some promising future research directions.


\bibliographystyle{plain}
\bibliography{reference}

\begin{APPENDICES}
\normalsize
\section{Proofs}\label{appndx:proof}
\paragraph{\textbf{Proof of Proposition~\ref{prop:reformulation0}:}}
    Let $\bx$ be any feasible solution of~\eqref{eq:ccp}.
    First, suppose $\bx = \mathbf{0}$. Then $p(\mathbf{0}) = \mathbb{P} \left[ 0 \leq b \right]$, which is equal to $1$ if $b \geq 0$. If $b < 0$, then $p(\bx) = 0 \not\geq \theta$, contradicting feasibility of $\bx$. Thus, it must be true that $b \geq 0$ for $\bx = \mathbf{0}$.
    Now, observe that one constructs a feasible solution in~\eqref{eq:basic-GMM-cc} with the same objective value by setting $\bx = \mathbf{0}$, $\lambda_k = 0$, $\zeta_k = \theta$, and $z_k = \Phi^{-1}(\theta)$ for all $k \in [K]$.
    Suppose now that $\bx = \bar{\bx} \neq \mathbf{0}$.
    Again, we construct a feasible solution in~\eqref{eq:basic-GMM-cc} with the same objective value by setting $\bx = \bar{\bx}$, $\lambda_k = \sqrt{\bar{\bx}^\top \bSigma_k \bar{\bx}}$, $z_k = \frac{b - \bar{\bx}^\top \bmu_k}{\sqrt{\bar{\bx}^\top \bSigma_k \bar{\bx}}}$, and $\zeta_k = \Phi(z_k)$ for all $k \in [K]$.

    Now, let $(\bx, \bz, \bzeta, \blambda)$ be any feasible solution for~\eqref{prop:reformulation0}.
    We claim that $\bx$ is also feasible to~\eqref{eq:ccp} with the same objective value.
    First, suppose $\bx = \mathbf{0}$. Then for all $k \in [K]$, $\lambda_k = 0$ implies that $b \geq 0$ for any arbitrary value of $z_k$. Hence, $\Phi(z_k) \geq \zeta_k$ is always satisfied for all $k \in [K]$ and therefore $\sum_{k \in [K]} w_k \zeta_k \geq \theta$. 
    Suppose now that $\bx \neq \mathbf{0}$. Then
    \[
    p(\bx) = \sum_{k=1}^K w_k \Phi \left( \frac{b - \bx^\top \bmu_k}{\sqrt{\bx^\top \bSigma_k \bx}} \right)
    =
    \sum_{k=1}^K w_k \Phi \left( \frac{b - \bx^\top \bmu_k}{\lambda_k} \right)
    \geq
    \sum_{k=1}^K w_k \Phi \left( z_k \right)
    \geq 
    \sum_{k=1}^K w_k \zeta_k
    \geq
    \theta.
    \]
    The equality follows from~\eqref{eq:cc_probability_standardized_form} because of Assumption~\ref{assmp:positive-definiteness} and that $\bx \neq \mathbf{0}$, whereas the inequalities  follow from feasibility of $(\bx, \bz, \bzeta, \blambda)$ and monotonicity of $\Phi$. Hence, $p(\bx) \geq \theta$ of ~\eqref{eq:ccp} is satisfied having same objective value as that of~\eqref{prop:reformulation0}. \hfill\qedsymbol

\vspace{1em}
\paragraph{\textbf{Proof of Proposition~\ref{prop:phi_pwl_outer_reformulation}:}}
First, consider the forward direction of the claim starting from $\bar{\Phi} \left(z ; \bzz\right)  \geq \zeta$. Let us take some $z\geq 0, \; \zeta$ such that $\bar{\Phi}(z; \bzz) \geq \zeta$. Then, $z = y_3 \geq 0$ with $t_3 = 1$ implies that $t_1, t_2 = 0, \, g_i z + g_i^0 \geq \zeta, \; i \in [R]_0$ and $1 \geq \zeta$. Thus, for a given solution $z \geq 0$ satisfying $\bar{\Phi}(z; \bzz) \geq \zeta$, one can construct a feasible solution to the set $\mH(\bzz)$ using $t_1, t_2 = 0$, $t_3 = 1$, $\bs{\alpha} = \bs{0}$, $y_1, y_2 = 0, z = y_3 \geq 0$. 

Next we consider some $z \in [\check{z}_{-L},  0)$. Then, $z$ will be in $[\check{z}_{-i}, \check{z}_{-i+1}]$ for some $i \in [L]$ which indicates $t_2 = 1, \, t_1 = t_3 = 0$, and therefore, $y_1 = y_3 = 0$. Then there exists some weights $\hat{\alpha}_i, \hat{\alpha}_{i-1} \geq 0$ such that $\hat{\alpha}_i + \hat{\alpha}_{i-1} = 1 = t_2$. Furthermore, from constraint $y_2 = \sum_{i=0}^L \alpha_i \zz_{-i}$ we obtain $y_2 = \hat{\alpha}_i \check{z}_{-i} +  \hat{\alpha}_{i-1} \check{z}_{-i+1}$. Hence, $ \alpha_{i} \Phi(\zz_{-i}) + \alpha_{i-1} \Phi(\zz_{-i+1}) \geq \zeta$. Thus for any given $z \in [\check{z}_{-L},  0)$, one can find an index $i \in [L]$ such that $ \alpha_i = \hat{\alpha}_i, \, \alpha_{i-1} = \hat{\alpha}_{i-1}$ with $y_2 = \hat{\alpha}_i \check{z}_{-i} +  \hat{\alpha}_{i-1} \check{z}_{-i+1}, \, y_1 = y_3 = 0$, and  $\alpha_j = 0 \;\forall j \in [L] \backslash \{i-1, i\}$, which satisfy all the constraints of $\mathcal{H}(\bzz)$. Similarly, if we take some $z < \bzz_{-L}$, then $t_1 = 1,$ and $y_1 = z$. This implies $t_2, t_3, y_2, y_3 = 0, \Phi(\check{z}_{-L}) \geq  \zeta$. Therefore, $\mathcal{H}(\bzz) \neq \emptyset$.

\vspace{1em}
Now we prove that any point in the set $\mathcal{H}(\bzz)$ also satisfies $\bar{\Phi} \left(z ; \bzz\right)  \geq \zeta$ based on the definition of  $\bar{\Phi} \left(z ; \bzz\right)$ in~\eqref{def:def_phi_pwl_outer}. It is formally shown below using three separate cases:

\begin{enumerate}
    \item[i)] Let us choose $ y_3 = z \geq 0$ with $t_3 = 1$ and $ \, t_1, t_2, y_1, y_2 = 0$ such that $ g_i y_3 + g_i^0 \geq \zeta$ for all $i \in [R]_0$. Then  $\bar{\Phi} \left(z ; \, \bzz\right) = \min_{i \in [R]_0} \{ g_i y_3 + g_i^0\} \geq \zeta$.
    \item[ii)] Let $\zz_{-L} \leq y_2 = z < 0$ with $ t_2=1$ and  $t_1, t_3, y_1, y_3 = 0$ such that $y_2 = \alpha_{\check{i}} \zz_{-\check{i}} + \alpha_{\check{i} - 1} \zz_{-\check{i} + 1}, \quad$  $\alpha_{\check{i}} + \alpha_{\check{i} - 1} = 1 = t_2, $ and $ \alpha_{\check{i}} \Phi(\zz_{-\check{i}}) + \alpha_{\check{i} - 1}  \Phi(\zz_{-\check{i} + 1}) \geq \zeta $ for some $\check{i} \in [L]$ and $\alpha_{j} = 0$ for all $j \in [L] \backslash \{\check{i} - 1, \check{i} \}$. Thus there exists an index $\check{i} \in [L]$ that satisfies $\bar{\Phi} \left(z ; \bzz\right) =  h_{\check{i}} y_2 + h_{\check{i}}^0  \geq \zeta$. Hence, $\bar{\Phi} \left(z ; \bzz\right) =  \max_{i \in [L]}h_{i} y_2 + h_{i}^0  \geq \zeta$ is satisfied.
    \item[iii)] Finally, $ y_1 = z < \check{z}_{-L}$ with $ t_1 = 1, \, t_2, t_3, y_2, y_3 = 0$ indicates that $\Phi(\zz_{-L}) = \bar{\Phi} \left(z ; \bzz\right) \geq \zeta$. 
\end{enumerate}
Additionally, since all of these cases allow $\zeta$ to be at most 1, using the definition of $\bar{\Phi}(z; \bzz)$ in~\eqref{def:def_phi_pwl_outer}, $\bar{\Phi}(z; \bzz)  \geq \zeta$ is satisfied. \hfill\qedsymbol

\vspace{1em}
\paragraph{\textbf{Proof of \textit{Lemma~\ref{lem: epsilon-reltn-z-phi-outer}}:}}
We split the proof into two cases depending on the convexity/concavity of $\Phi$. Recall that for any $\hat{z}$ between $z_1$ and $z_2$, and for some $\mathfrak{z} \in (z_1,z_2)$, the Taylor expansion with Lagrange remainder around $z_1$ gives
\begin{align}\label{eq:taylor-expansion}
    \Phi(\hat{z}) = \Phi(z_1) + \Phi'(z_1)(\hat{z}-z_1) + \tfrac{1}{2}\Phi''(\mathfrak{z})\,(\hat{z}-z_1)^2.
\end{align}

\textbf{Case 1 (Secant approximation when $z<0$).}  
Let $z_1 < \hat{z} < z_2 < 0$.  
For notational convenience let the secant line be $S(z) = \bar{\Phi}(z; \zz)$ for $z < 0$: 
\begin{align}\label{eq: s-express}
    S(z) := \Phi(z_1) + \frac{\Phi(z_2)-\Phi(z_1)}{z_2-z_1}(z-z_1).
\end{align}

From Taylor's theorem at $z_1$, we have
\[
\Phi(\hat{z}) = \Phi(z_1) + \Phi'(z_1)(\hat{z}-z_1) + \tfrac{1}{2}\Phi''(\mathfrak{z})\,(\hat{z}-z_1)^2,
\]
for some $\mathfrak{z} \in (z_1,\hat{z}).$
Similarly, expanding at $z_1$ and plugging $z=z_2$ gives
\[
\Phi(z_2) = \Phi(z_1) + \Phi'(z_1)(z_2-z_1) + \tfrac{1}{2}\Phi''(\eta)\,(z_2-z_1)^2,
\]
for some $\eta \in (z_1,z_2).$
Hence, the slope of the secant is
\[
\frac{\Phi(z_2)-\Phi(z_1)}{z_2-z_1} 
= \Phi'(z_1) + \tfrac{1}{2}\Phi''(\eta)(z_2-z_1).
\]
Substituting this slope expression into~\eqref{eq: s-express} and using $z = \hat{z}$, we obtain
\[
S(\hat{z}) = \Phi(z_1) + \left(\Phi'(z_1) + \tfrac{1}{2}\Phi''(\eta)(z_2-z_1)\right)(\hat{z}-z_1).
\]
Next, subtracting~\eqref{eq:taylor-expansion} from $S(\hat{z})$, we find
\[
S(\hat{z}) - \Phi(\hat{z}) 
= \tfrac{1}{2}\Phi''(\eta)(z_2-z_1)(\hat{z}-z_1) - \tfrac{1}{2}\Phi''(\mathfrak{z})(\hat{z}-z_1)^2.
\]
Factoring out $(\hat{z}-z_1)$,
\begin{align}\label{eq: S_z-Phi_z}
    S(\hat{z}) - \Phi(\hat{z}) 
= \tfrac{1}{2}(\hat{z}-z_1)\Big(\Phi''(\eta)(z_2-z_1) - \Phi''(\mathfrak{z})(\hat{z}-z_1)\Big).
\end{align}

Next, we show that for some $\tilde{\mathfrak{z}}\in(\hat{z},z_2)$,
\begin{align}
 & \Phi''(\eta)(z_2-z_1) - \Phi''(\mathfrak{z})(\hat{z}-z_1) 
= \Phi''(\tilde{\mathfrak{z}})(z_2-\hat{z}),   \label{eq: IVT}
\end{align}

\noindent
using the fact that $\Phi$ is twice continuously differentiable on an interval containing $[z_1,z_2]$
with $z_1 < \hat{z} < z_2$ and applying the Mean Value Theorem to $\Phi'$ respectively on the intervals $[z_1,\hat{z}]$, $[\hat{z},z_2]$ and $[z_1,z_2]$.

\begin{itemize}[label={}, leftmargin=0.5em]
    \item By the Mean Value Theorem on $[\hat{z},z_2]$, there exists
$\tilde{\mathfrak{z}} \in (\hat{z},z_2)$ such that
\[
    \Phi'(z_2) - \Phi'(\hat{z}) = \Phi''(\tilde{\mathfrak{z}})\,(z_2 - \hat{z}), \,\,\, \text{and}
\]
    \item By the Mean Value Theorem on $[z_1,\hat{z}]$, there exists 
    $\mathfrak{z} \in (z_1,\hat{z})$ such that
    \[
        \Phi'(\hat{z}) - \Phi'(z_1) = \Phi''(\mathfrak{z})\,(\hat{z} - z_1).
    \]
    \item By adding the above two equalities, we get: $ \,\,\,  \Phi'(z_2) - \Phi'(z_1) =\Phi''(\tilde{\mathfrak{z}})\,(z_2 - \hat{z}) + \Phi''(\mathfrak{z})\,(\hat{z} - z_1)$.

    \item Now, by the Mean Value Theorem on $[z_1,z_2]$, there exists 
    $\eta \in (z_1,z_2)$ such that
    \[
        \Phi'(z_2) - \Phi'(z_1) = \Phi''(\eta)\,(z_2 - z_1).
    \]
\end{itemize}

\noindent Combining the last two equality expressions, we obtain~\eqref{eq: IVT}. Thus, plugging~\eqref{eq: IVT} into~\eqref{eq: S_z-Phi_z} gives us:
\[
S(\hat{z}) - \Phi(\hat{z}) = \tfrac{1}{2}\,\Phi''(\tilde{\mathfrak{z}})(z_2-\hat{z})(\hat{z}-z_1),
\quad \text{for some} \,\,\,\, \tilde{\mathfrak{z}}\in(\hat{z},z_2).
\]

Now, since $\Phi$ is convex on $(-\infty,0)$, we have $\Phi''(\Tilde{\mathfrak{z}})\ge 0$. Hence,
\[
0 \;\le\; S(\hat{z}) - \Phi(\hat{z}) \;\le\; \tfrac{1}{2} C(z_1,z_2) \, (z_2-\hat{z})(\hat{z}-z_1),
\]
where $C(z_1,z_2) := \max_{z \in [z_1,z_2]} |\Phi''(z)|$. Furthermore, by geometric inequality
\[
(z_2-\hat{z})(\hat{z}-z_1) \;\le\; \tfrac{1}{4}(z_2-z_1)^2.
\]
This inequality holds as equality at $\hat{z} = \tfrac{z_1+z_2}{2}$.  
Therefore, the secant-based approximation error is bounded by
\[
S(\hat{z}) - \Phi(\hat{z}) \;\le\; \tfrac{1}{8} C(z_1,z_2) \, (z_2-z_1)^2.
\]

\textbf{Case 2  (Tangent approximation when $z \geq 0$).}  
Let $z_1 < \hat{z} < z_2$ with $z_1\geq 0$.  
Consider the tangent line:
\[
T(z) := \Phi(z_1) + \Phi'(z_1)(z-z_1).
\]
Subtracting the Taylor expansion~\eqref{eq:taylor-expansion} from $T(z)$ gives
\[
T(\hat{z}) - \Phi(\hat{z}) = -\tfrac{1}{2}\Phi''(\mathfrak{z})\,(\hat{z}-z_1)^2
\]
for some $\mathfrak{z} \in (z_1,\hat{z})$.  
Since $\Phi(z)$ is concave in $z$ for $z \geq 0$, we have $\Phi''(\mathfrak{z})\le 0$. Hence, the error is nonnegative:
\[
0 \;\le\; T(\hat{z}) - \Phi(\hat{z}) \;\le\; \tfrac{1}{2} C(z_1,z_2)\,(\hat{z}-z_1)^2 \;\le\; \tfrac{1}{2} C(z_1,z_2)\,(z_2-z_1)^2.
\]
Similarly, expanding around $z_2$ yields
\[
0 \;\le\;T(\hat{z}) - \Phi(\hat{z}) \;\le\; \tfrac{1}{2} C(z_1,z_2)\,(z_2-\hat{z})^2 \;\le\; \tfrac{1}{2} C(z_1,z_2)\,(z_2-z_1)^2.
\]\hfill \qedsymbol


\begin{proposition}\label{prop: curv-property}
Let $z \in \mathbb{R}$. Then
\begin{equation*}
\Phi''(z) \quad \text{is} \quad 
\begin{cases}
\text{strictly increasing for} \,\, z \in (-\infty,-1]\\[4pt]
\text{strictly decreasing for} \,\, z \in [-1,1], \\[4pt]
\text{strictly increasing for} \,\, z \in [1,\infty), \\[6pt]
\end{cases}
\end{equation*}
with $\max_{z \leq 0} \Phi''(z) = \Phi''(-1)=\frac{1}{\sqrt{2\pi}}e^{-1/2}$ and $\min_{z \geq 0} \Phi''(z) = \Phi''(1)=-\frac{1}{\sqrt{2\pi}}e^{-1/2}$, respectively.
\end{proposition}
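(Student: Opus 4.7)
The plan is to reduce the proposition entirely to a sign analysis of $\Phi'''(z)$ together with a direct evaluation at $z = \pm 1$. Since $\Phi'(z) = \phi(z) = \frac{1}{\sqrt{2\pi}} e^{-z^2/2}$, differentiating gives $\Phi''(z) = -z\,\phi(z)$, and differentiating once more yields $\Phi'''(z) = (z^2 - 1)\phi(z)$. The factor $\phi(z)$ is strictly positive on $\mathbb{R}$, so the sign of $\Phi'''(z)$ is governed entirely by the sign of $z^2 - 1$.

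From this representation, I would first conclude that $\Phi'''(z) > 0$ on $(-\infty, -1)$ and on $(1, \infty)$, while $\Phi'''(z) < 0$ on $(-1, 1)$, with $\Phi'''(\pm 1) = 0$. Since strict positivity (respectively negativity) of the derivative of a continuous function on an open interval forces strict monotonicity on the corresponding closed interval (through the mean value theorem), this immediately gives the three monotonicity claims: $\Phi''$ is strictly increasing on $(-\infty, -1]$, strictly decreasing on $[-1, 1]$, and strictly increasing on $[1, \infty)$.

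Next, I would identify the extrema over the half-lines. On $(-\infty, 0]$, the monotonicity pattern (increasing up to $-1$, decreasing on $[-1, 0]$) shows that $\Phi''$ attains its maximum at $z = -1$; plugging into $\Phi''(z) = -z\,\phi(z)$ gives $\Phi''(-1) = \phi(-1) = \frac{1}{\sqrt{2\pi}} e^{-1/2}$. Symmetrically, on $[0, \infty)$, the pattern (decreasing on $[0, 1]$, increasing on $[1, \infty)$) shows that the minimum is attained at $z = 1$, and $\Phi''(1) = -\phi(1) = -\frac{1}{\sqrt{2\pi}} e^{-1/2}$. To make these truly global over the respective half-lines rather than only local, I would note that $\phi(z) \to 0$ as $|z| \to \infty$, hence $\Phi''(z) = -z\phi(z) \to 0$ at both tails, so the tail values cannot exceed the interior extrema already identified.

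The main obstacle here is essentially nonexistent — the argument is a routine calculus exercise once the third derivative is written in closed form. The only subtlety worth being careful about is keeping track of signs (since $\Phi''$ is nonpositive on $[0, \infty)$, the word ``minimum'' refers to the most negative value), and ensuring that the limits at $\pm \infty$ are invoked so that the stated maxima and minima over unbounded half-lines are indeed attained at $\mp 1$ rather than in some unseen tail.
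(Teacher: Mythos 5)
Your proposal is correct and follows essentially the same route as the paper: both compute $\Phi'''(z) = (z^2-1)\,\phi(z)$, read off its sign from $z^2-1$ since $\phi>0$, and evaluate $\Phi''(\pm 1) = \mp\phi(1)$. Your extra remark about the tails vanishing is harmless but not needed, since strict monotonicity on the closed half-intervals already forces the global extrema over each half-line to occur at $\mp 1$.
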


\begin{proof}
Note that $\Phi''(z) = \frac{d}{dz}\Bigl(\phi(z)\Bigr) = \frac{d}{dz}\Bigl(\tfrac{1}{\sqrt{2\pi}}e^{-z^2/2}\Bigr) = -z\,\phi(z)$. Set
\(\kappa(z) := -z\,\phi(z).\)
Then
\[
\kappa'(z) = -\phi(z) - z\,\phi'(z).
\]
Using $\phi'(z) = -z\,\phi(z)$, we get
\[
\kappa'(z) = -\phi(z) - z(-z\,\phi(z))
           = \phi(z)\,(z^2 - 1).
\]
Since $\phi(z) > 0$ for all $z$, the sign of $\kappa'(z)$ is the sign of $z^2-1$ and we obtain: 
\begin{itemize}[label={}]
    \item $\kappa'(z) > 0 \text{ for } z \in  (-\infty, -1), \quad
\kappa'(-1) = 0 \, $ with $\, \kappa(-1) = \frac{1}{\sqrt{2\pi}}e^{-1/2}$,
\item $\kappa'(z) < 0 \text{ for } z\in(-1, 1), \quad \kappa'(1) = 0 \,$ with $\,\kappa(1) = -\frac{1}{\sqrt{2\pi}}e^{-1/2}, \quad $ and
\item $ \kappa'(z) > 0 \;\text{ for } z \in  (1, \infty),$
\end{itemize}
which gives us the desired result.
\end{proof}

\vspace{1em}
\paragraph{\textbf{Proof of \textit{Theorem~\ref{thm:complexity-breakpoint}}}:}
We prove for the cases: $z \geq 0$ and $z <0$. 

First, let the number of linear pieces required for the outer approximating standard normal CDF curve when $0 \leq z \leq 1$ be $N_1$. Observe that corresponding iterative rule in Part 1 of Algorithm~\ref{alg:breakpoint_find} is such that $\sum_{i=1}^{N_1} (\check{z}_{i} - \check{z}_{i-1}) = 1$ where $\check{z}_{0} = 0, \check{z}_{N_1} = 1$ (line 1 of Algorithm~\ref{alg:breakpoint_find}). However, by Lemma~\ref{lem: epsilon-reltn-z-phi-outer} and since $\max_{z \in \mathbb{R}} |\Phi''(z)| = \frac{1}{\sqrt{2 \pi}}e^{-1/2}$,

$$1 = \sum_{i=1}^{N_1} (\check{z}_{i} - \check{z}_{i-1}) \geq  N_1 \sqrt{\frac{2 \tau}{\frac{1}{\sqrt{2 \pi}}e^{-1/2}}} = (N_1) \sqrt{2 \tau \sqrt{2 \pi} e^{0.5}}.$$
Hence, we have $N_1 \leq \sqrt{\frac{1}{2 \tau \sqrt{2 \pi} e^{0.5}}}$. 

For Part 2 of Algorithm~\ref{alg:breakpoint_find} ($z \geq 1$), let the number of pieces  be $N_2 + 1$ starting from $z = 1$ and the corresponding constructed breakpoints be
$$\check{z}_{i} - \check{z}_{i-1} = \sqrt{\frac{2 \tau}{\phi(\check{z}_{i-1}) \check{z}_{i-1}}} \qquad \forall i \in [N_2].$$

 Let $I_i = \check{z}_{i} - \check{z}_{i-1}, \;\; \forall i \in [N_2]$. By construction and curvature property of $\Phi(\cdot)$ from Proposition~\ref{prop: curv-property}, clearly $I_i \geq I_{i-1} \, \forall i \in [N_2]$. Then,

\begin{align}
    & \hat{I} := \sum_{i=1}^{N_2} I_i \geq I_{N_2} = \sqrt{\frac{2 \tau \sqrt{2 \pi} e^{0.5 (\check{z}_{N_2-1})^2}}{\check{z}_{(N_2-1)}}} \geq \sqrt{\frac{2 \tau \sqrt{2 \pi} e^{0.5 (\check{z}_{N_2-1})^2}}{\check{z}_{R}}} \nonumber\\
    & \implies ( \check{z}_{N_2-1} )^2 \leq 2 \log \frac{\hat{I}^2 \zz_R}{2 \tau \sqrt{2 \pi}}  \label{eq:break_point_cal}
 \end{align}

However, since Part 2 of Algorithm~\ref{alg:breakpoint_find} starts from $z=1$, 
$$\check{z}_{N_2-1} = 1 + \sum_{i=1}^{N_2-1} I_i \geq 1 + (N_2-1) I_1 = 1 + (N_2-1) \sqrt{2 \tau \sqrt{2 \pi} e^{0.5}}.$$

\noindent Plugging this $\check{z}_{N_2-1}$ value into \eqref{eq:break_point_cal} gives us:

$$(1 + (N_2-1) \sqrt{2 \tau \sqrt{2 \pi} e^{0.5}})^2 \leq 2 \log \frac{\hat{I}^2 \zz_R}{2 \tau \sqrt{2 \pi}}. $$

\noindent Note that since $\hat{I} \leq \zz_R$, $\,\, \hat{I}^2 \zz_R \leq \zz^3_R$. Hence, it follows that 
$$ N_2 \leq \frac{1}{\sqrt{2 \tau \sqrt{2 \pi} e^{0.5}}}\bigg(\sqrt{2 \log \frac{\zz^3_R}{2 \tau \sqrt{2 \pi}}} - 1\bigg) + 1.$$

\noindent Thus, the total number of breakpoints we obtain using Algorithm~\ref{alg:breakpoint_find} is

$$N_1 + N_2 + 1 \leq \sqrt{\frac{1}{2 \tau \sqrt{2 \pi} e^{0.5}}} + \frac{1}{\sqrt{2 \tau \sqrt{2 \pi} e^{0.5}}}\bigg(\sqrt{2 \log \frac{\zz^3_R}{2 \tau \sqrt{2 \pi}}} - 1\bigg) + 2,$$

\noindent i.e., $|\mathcal{A}^{R}| = O\bigg(\max\{\frac{1}{\sqrt{\tau}}, \frac{1}{\sqrt{\tau}} \sqrt{log(\frac{\zz^3_R}{\tau})}\}\bigg)$. Since the allowed approximation error is at most $\tau$ and ${\zz^3_R} \gg {\tau}$, $\sqrt{log(\frac{\zz^3_R}{\tau})} > 1$ and hence $\frac{1}{\sqrt{\tau}} \sqrt{log(\frac{\zz^3_R}{\tau})} > \frac{1}{\sqrt{\tau}}$. Therefore, $|\mathcal{A}^{R}| = O\bigg(\frac{1}{\sqrt{\tau}} \sqrt{log(\frac{\zz^3_R}{\tau})}\bigg)$. 

Because of the symmetric nature of the curvature of $\Phi(\cdot)$ with respect to $z=0$, one can derive the same complexity order for Algorithm~\ref{alg:breakpoint_find_negative}. 
Hence, if the choice of $\zz_R < \infty$ is such that $1 - \Phi(\zz_R) \leq \tau$ and $\zz_L > -\infty$ is such that $\Phi(\zz_L)\leq \tau$, we obtain the worst-case complexity $O\bigg(\frac{1}{\sqrt{\tau}} \sqrt{log(\frac{1}{\tau})}\bigg)$. \hfill\qedsymbol

\vspace{1em}
\paragraph{\textbf{Proof of Proposition~\ref{prop:phi_pwl_inner_reformulation}:}}
First,  consider the forward direction of the claim starting from $\underline{\Phi} \left(z ; \bzz\right)  \geq \zeta$. Let $0 \leq z < \zz_R$. Then $z= y_3$ and $t_3 = 1$ must hold in~\eqref{eq:reform-inner-sub}. Hence, $t_1, t_2, t_4, y_1, y_2, y_4 = 0$, which leads to $\, g_i y_3 + g_i^0 t_3  \geq \zeta, i \in [R]$ and $1 \geq \zeta$.  Thus, $\mG(\bzz) \neq \emptyset$. Similarly, if $\zz_R \leq z $, then $z = y_4,$ and  $t_4 = 1$ enforcing that $t_1, t_2, t_3, y_1, y_2, y_3 = 0$. Therefore, $\Phi(\zz_R) \geq \zeta$. Hence, feasibility is satisfied. 

Next, for $\zz_{-L} \leq z < 0$, there must exist an $ \alpha_{\hat{i}} =1$ for some $\hat{i} \in [L-1]_0$ implying that $t_2 = 1, y_2 = \mathfrak{z}_{\hat{i}} = z$. Therefore, $y_1, y_3, y_4, t_1, t_3, t_4 = 0$ and feasibility holds since $1 \geq \zeta$ and $h_{\hat{i}} y_2 + h^0_{\hat{i}} \geq \zeta$. 

Finally for $z < \zz_{-L}$, $t_1 = 1$, $z = y_1$ and $t_2, t_3, t_4, y_2, y_3, y_4 = 0$  is a feasible choice that satisfies $h_L y_1 + h^0_L \geq \zeta, \; 1 \geq \zeta, \; h_L y_1 + h^0_L \geq 0$. Therefore, for any $z \leq 0, \zeta \in \mathbb{R}$ such that $\underline{\Phi}(z, \bzz) = \max\{0, \max_{i \in [L]_0} h_i z + h^0_i\} \geq \zeta$ also satisfy the constraints defining the set $\mG(\bzz)$ in~\eqref{eq:reform-inner-sub}.

\vspace{0.5em}
We now prove the other direction. We claim that any point in the set $\mathcal{G}(\bzz)$ also satisfies $\underline{\Phi} \left(z ; \bzz\right)  \geq \zeta$, which follows from the definition of  $\underline{\Phi} \left(z ; \bzz\right)$ in~\eqref{def:def_phi_pwl_inner} as follows:
\begin{enumerate}
    \item[i)] $  y_4 = z \geq \check{z}_R$ with $t_4 = 1$ and $t_1,  t_2, t_3, y_1, y_2, y_3 = 0$ indicates $\underline{\Phi} \left(z ; \bzz\right)  = \Phi(\zz_{R}) \geq \zeta$,
    
    \item[ii)] $\check{z}_R > z = y_3 \geq 0$ with $t_3 = 1$ and $t_1, t_2, t_4, y_1, y_2, y_4 = 0$  satisfies $g_i y_3 + g^0_{\check{i}} \geq \zeta$ for all $i \in [R]$,
    
    \item[iii)] $\zz_{-L} \leq y_2 = z < 0$ with $t_2 = 1$ and $t_1, t_3, t_4, y_1, y_3, y_4 = 0$ indicates that there exists some $\check{i} \in [L-1]$ such that $y_2 = \mathfrak{z}_{\check{i}}, \alpha_{\check{i}} = 1$ satisfying  $\underline{\Phi} \left(z ; \bzz\right) = h_{\check{i}} \mathfrak{z}_{\check{i}} + h_{\check{i}}^0 \geq \zeta$, and 
    
    \item[iv)] finally, $ y_1 = z < \check{z}_{-L}$ with $ t_1 = 1$ and $t_2, t_3, y_2, y_3 = 0$ leads to $h_L y_1 + h_L^0 \geq \zeta \geq 0$.
\end{enumerate}
Therefore, $\zeta$ satisfy both the secant and tangent cases of  definition~\eqref{def:def_phi_pwl_inner} and $\underline{\Phi}(z; \bzz)  \geq \zeta$ holds. \hfill\qedsymbol

\vspace{1em}

\section{Inner Approximation (PWL-I)}\label{appndx: PWL-I}

\begin{lemma}\label{lem: epsilon-reltn-z-phi-inner}
Let, for any $z_1, z_2 \in \mathbb R$,  $C(z_1, z_2) \coloneqq \max_{z \in [z_1, z_2]} |\Phi''(z)|$, $\bzz$ be any valid array of breakpoints as defined in~\eqref{eqn:ValidArrayBreakPoints}, and $\tau >0$. Then the following holds:
\begin{itemize}
    \item[a)] If $\zz_{i} - \zz_{i-1} \leq 2 \sqrt{\frac{2 \tau}{C(\zz_{i-1}, \zz_{i})}}$ for all $i \in [R]$, then $0 \leq \Phi(z) - \underline{\Phi}(z; \bzz) \leq \tau$ for all $z \geq 0$.

    \item[b)] If $\zz_{-i+1} - \zz_{-i} \leq \sqrt{\frac{2 \tau}{C(\zz_{-i}, \zz_{-i+1})}}$ for all $i \in [L]$, $0 \leq \Phi(z) - \underline{\Phi}(z; \bzz) \leq \tau $ for all $z < 0$.
\end{itemize}
\end{lemma}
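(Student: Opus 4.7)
The plan is to mirror the proof of Lemma~\ref{lem: epsilon-reltn-z-phi-outer}, exploiting that a secant lies below a concave curve and that a tangent lies below a convex one. Because the inner approximation employs secants on $[0,\infty)$ (where $\Phi$ is concave) and tangents on $(-\infty,0)$ (where $\Phi$ is convex), the inequality $\underline{\Phi}(z;\bzz)\le \Phi(z)$ holds pointwise by construction, so the lower bound $\Phi(z)-\underline{\Phi}(z;\bzz)\ge 0$ is immediate. All real work therefore goes into the upper bound of $\tau$. The Taylor and Mean-Value-Theorem manipulations used in Cases~1 and~2 of Lemma~\ref{lem: epsilon-reltn-z-phi-outer} carry over verbatim; the roles of ``secant'' and ``tangent'' swap, and the sign of $\Phi''$ flips, but the algebraic identities do not change.

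For part (a), I would fix $\hat{z}\in[\zz_{i-1},\zz_i]$ with $\zz_{i-1}\ge 0$ and let $S$ denote the secant on this subinterval. Applying the Mean Value Theorem to $\Phi'$ on $[\zz_{i-1},\hat{z}]$, $[\hat{z},\zz_i]$, and $[\zz_{i-1},\zz_i]$, exactly as in Case~1 of Lemma~\ref{lem: epsilon-reltn-z-phi-outer}, yields
\[
S(\hat{z})-\Phi(\hat{z})=\tfrac{1}{2}\,\Phi''(\tilde{\mathfrak{z}})\,(\zz_i-\hat{z})(\hat{z}-\zz_{i-1})
\]
for some $\tilde{\mathfrak{z}}\in(\hat{z},\zz_i)$. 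Here concavity of $\Phi$ on $[0,\infty)$ gives $\Phi''(\tilde{\mathfrak{z}})\le 0$, hence $\Phi(\hat{z})-S(\hat{z})\ge 0$. Combining the AM--GM bound $(\zz_i-\hat{z})(\hat{z}-\zz_{i-1})\le\tfrac{1}{4}(\zz_i-\zz_{i-1})^2$ with the spacing hypothesis $\zz_i-\zz_{i-1}\le 2\sqrt{2\tau/C(\zz_{i-1},\zz_i)}$ yields $\Phi(\hat{z})-S(\hat{z})\le\tfrac{1}{8}C(\zz_{i-1},\zz_i)(\zz_i-\zz_{i-1})^2\le\tau$. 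Since $\underline{\Phi}(\hat{z};\bzz)$ is the minimum of $\Phi(\zz_R)$ and all such secant values, and on $[\zz_{i-1},\zz_i]$ the tightest secant is $S$ itself, the claim follows.

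For part (b), I would fix $\hat{z}\in[\zz_{-i},\zz_{-i+1}]$ with $\zz_{-i+1}\le 0$ and let $T$ be the tangent at one of the endpoint breakpoints. The Taylor expansion with Lagrange remainder around that anchor, reprising Case~2 of Lemma~\ref{lem: epsilon-reltn-z-phi-outer}, gives $T(\hat{z})-\Phi(\hat{z})=-\tfrac{1}{2}\Phi''(\mathfrak{z})(\hat{z}-\text{anchor})^2$ for some $\mathfrak{z}$ in the subinterval. Convexity of $\Phi$ on $(-\infty,0]$ forces $\Phi''(\mathfrak{z})\ge 0$, so $T(\hat{z})\le\Phi(\hat{z})$, and the bound $\Phi(\hat{z})-T(\hat{z})\le\tfrac{1}{2}C(\zz_{-i},\zz_{-i+1})(\zz_{-i+1}-\zz_{-i})^2\le\tau$ follows directly from the hypothesis $\zz_{-i+1}-\zz_{-i}\le\sqrt{2\tau/C(\zz_{-i},\zz_{-i+1})}$. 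Since $\underline{\Phi}(\hat{z};\bzz)\ge T(\hat{z})$ by its $\max$ definition, the desired $\Phi(\hat{z})-\underline{\Phi}(\hat{z};\bzz)\le\tau$ follows.

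Because the two cases essentially re-use the Taylor/MVT calculation from Lemma~\ref{lem: epsilon-reltn-z-phi-outer}, the only real obstacle is bookkeeping: verifying that the outer caps ($0$ on the left, $\Phi(\zz_R)$ on the right) defining $\underline{\Phi}$ do not spoil the upper bound inside $[\zz_{-L},\zz_R]$ (they can only push $\underline{\Phi}$ further \emph{below} $\Phi$, which is what we want), and choosing the correct secant (in part~(a)) or tangent anchor (in part~(b)) on each subinterval so that the Lagrange-remainder estimate is the one taken by the $\max$/$\min$. Both checks reduce to the local concavity/convexity of $\Phi$ and present no technical difficulty.
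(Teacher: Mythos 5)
Your proposal is correct and follows exactly the route the paper intends: the paper's proof of this lemma is simply the remark that it mirrors Lemma~\ref{lem: epsilon-reltn-z-phi-outer}, with the secant/tangent roles and the sign of $\Phi''$ swapped, which is precisely what you have written out. The error constants ($\tfrac18 C\Delta^2$ for the secant on the concave side, $\tfrac12 C\Delta^2$ for the tangent on the convex side) and the resulting spacing conditions match the statement, so nothing is missing.
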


\begin{proof}
    Proof of this Lemma is similar to that of Lemma~\ref{lem: epsilon-reltn-z-phi-outer}.
\end{proof}
The next result depends on an algorithm that is the counterpart to Algorithms~\ref {alg:breakpoint_find} and ~\ref{alg:breakpoint_find_negative} for inner approximation. 

\begin{theorem}\label{thm:complexity-breakpoint-inner}
Let $\zz_{-L}, \zz_R$  be such that $\max\big\{1 - \Phi(\zz_{R}), \Phi(\zz_{-L})\big\} \leq \tau$, where $0 < \tau \ll \min\{\zz_R, \, |\zz_{-L}|\}$. Then $O(\frac{1} {\sqrt{\tau}} \sqrt{log(\frac{1}{\tau})})$ breakpoints are sufficient to ensure $0 \leq \Phi(z) - \underline{\Phi}(z; \bzz) \leq \tau, \, \forall z \in \mathbb{R}$.
\end{theorem}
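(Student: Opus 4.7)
The plan is to mirror the proof of Theorem~\ref{thm:complexity-breakpoint}, swapping the roles of the secant and tangent approximations between the two halves of the real line. By Lemma~\ref{lem: epsilon-reltn-z-phi-inner}, on the concave side $z \geq 0$ the secant spacing may be as large as $2\sqrt{2\tau/C(\zz_{i-1},\zz_{i})}$, while on the convex side $z<0$ the tangent spacing must satisfy $\zz_{-i+1}-\zz_{-i}\leq \sqrt{2\tau/C(\zz_{-i},\zz_{-i+1})}$. This is precisely the mirror of the spacing rules used in the outer-approximation algorithms, so the counterpart algorithm for PWL-I (secant-based on the right of the origin, tangent-based on the left, starting at $\pm 1$ and walking outward) produces exactly the same type of breakpoint sequence, just with the factor of $2$ attached to the other side. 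Hence the asymptotic counts on the two sides simply swap, and the final bound is unchanged.

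First I would bound the number of pieces on each unit subinterval adjacent to the origin. On $[0,1]$, Proposition~\ref{prop: curv-property} gives $|\Phi''(z)|\leq \phi(1)=\tfrac{1}{\sqrt{2\pi}}e^{-1/2}$, so using the secant rule the number of pieces $N_1$ satisfies $1=\sum_{i=1}^{N_1}(\zz_i-\zz_{i-1})\geq N_1\cdot 2\sqrt{2\tau\sqrt{2\pi}e^{1/2}}$, yielding $N_1\leq O(1/\sqrt{\tau})$. Symmetrically, on $[-1,0]$ the tangent rule (no factor $2$) gives an $O(1/\sqrt{\tau})$ count as well. These subintervals therefore contribute only $O(1/\sqrt{\tau})$ breakpoints in total.

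For the tails, I would reuse the telescoping/induction argument from the proof of Theorem~\ref{thm:complexity-breakpoint} almost verbatim. On $[1,\zz_R]$, write $\zz_i-\zz_{i-1}=2\sqrt{2\tau/(\phi(\zz_{i-1})\zz_{i-1})}$ and note that, since $\phi(z)z$ is decreasing on $[1,\infty)$ by Proposition~\ref{prop: curv-property}, the step sizes are monotonically increasing. Hence for $N_2$ pieces the penultimate breakpoint satisfies
\[
\hat{I}:=\sum_{i=1}^{N_2}(\zz_i-\zz_{i-1})\geq \zz_{N_2}-\zz_{N_2-1}=2\sqrt{\frac{2\tau\sqrt{2\pi}e^{(\zz_{N_2-1})^2/2}}{\zz_{N_2-1}}},
\]
which rearranges to $(\zz_{N_2-1})^2\leq 2\log\!\bigl(\hat{I}^{2}\zz_R/(8\tau\sqrt{2\pi})\bigr)$. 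Combined with the trivial lower bound $\zz_{N_2-1}\geq 1+(N_2-1)\cdot 2\sqrt{2\tau\sqrt{2\pi}e^{1/2}}$ coming from using the smallest step size, this yields $N_2\leq O\!\bigl(\sqrt{\log(\zz_R^{3}/\tau)/\tau}\bigr)$. The hypothesis $1-\Phi(\zz_R)\leq \tau$ together with standard Mills-ratio estimates forces $\zz_R=O(\sqrt{\log(1/\tau)})$, so $\log(\zz_R^{3}/\tau)=O(\log(1/\tau))$ and we obtain the desired $O(\sqrt{\log(1/\tau)/\tau})$ count. The tail $[\zz_{-L},-1]$ on the tangent side is handled by the same argument (only the factor of $2$ is missing, which improves constants), using $\Phi(\zz_{-L})\leq \tau$ in place of $1-\Phi(\zz_R)\leq \tau$.

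The main, but essentially cosmetic, obstacle is bookkeeping the constants when the factor of $2$ migrates from the left half (outer case) to the right half (inner case), and verifying that monotonicity of the step sizes, which drove the induction in Theorem~\ref{thm:complexity-breakpoint}, still holds on the correct intervals for the inner construction; Proposition~\ref{prop: curv-property} ensures this. Summing the four contributions $N_1^{+}+N_2^{+}+N_1^{-}+N_2^{-}$ then gives $|\mathcal{A}|=O\!\bigl(\sqrt{(1/\tau)\log(1/\tau)}\bigr)$, and Lemma~\ref{lem: epsilon-reltn-z-phi-inner} guarantees $0\leq \Phi(z)-\underline{\Phi}(z;\bzz)\leq \tau$ on $[\zz_{-L},\zz_R]$; outside this interval the clipping at $0$ and $\Phi(\zz_R)$ in~\eqref{def:def_phi_pwl_inner}, together with $\max\{1-\Phi(\zz_R),\Phi(\zz_{-L})\}\leq \tau$, extends the bound to all of $\mathbb{R}$.
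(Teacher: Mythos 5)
Your proposal is correct and takes essentially the same route the paper intends: the paper omits the proof of this theorem entirely, stating only that it is analogous to Theorem~\ref{thm:complexity-breakpoint}, and your write-up is precisely that mirrored argument (secant/tangent roles and the factor of $2$ swapped across the origin, same telescoping bound on the tails) carried out in detail, with the spacing rules correctly taken from Lemma~\ref{lem: epsilon-reltn-z-phi-inner} and the tail clipping at $0$ and $\Phi(\zz_R)$ handled correctly.
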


\begin{proof}
   Omitted because of the similarity to the proof of Theorem~\ref{thm:complexity-breakpoint}.
\end{proof}


\section{Additional Computational Results}\label{appndx: Ommtd-Table}

\vspace{-10pt}
\scriptsize
\begin{longtable}{c | r| r c c| r c c c| r c c}
\caption{\centering Comparison of PWL-I, PWL-O, and SAA approximations for $\varrho = 2, \varsigma=2$ with equal mixture weight and 64,800s (18-hour) time limit} \label{tab:num-performance-2-2-equal} \\ 
\hline
\multicolumn{2}{c|}{} & \multicolumn{3}{c|}{PWL-I} & \multicolumn{4}{c|}{PWL-O} & \multicolumn{3}{c}{SAA} \\ \hline
\multicolumn{1}{c|}{$K$} 
& \multicolumn{1}{c|}{$\theta$} 
& \multicolumn{1}{c|}{\makecell{Time \\ (sec)}} 
& \multicolumn{1}{c|}{Obj} 
& \multicolumn{1}{c|}{${\check{\theta}}^I$} 
& \multicolumn{1}{c|}{\makecell{Time \\ (sec)}} 
& \multicolumn{1}{c|}{\%- Obj}
& \multicolumn{1}{c|}{${\check{\theta}}^O$} 
& \multicolumn{1}{c|}{\%${\check{\theta}}$} 
& \multicolumn{1}{c|}{\makecell{Time \\ (sec)}}  
& \multicolumn{1}{c|}{Obj\starfootnote} 
& \multicolumn{1}{c}{${\check{\theta}}^S$} \\ 
\hline
\endfirsthead
\hline
\multicolumn{1}{c|}{$K$} 
& \multicolumn{1}{c|}{$\theta$} 
& \multicolumn{1}{c|}{\makecell{Time \\ (sec)}} 
& \multicolumn{1}{c|}{Obj} 
& \multicolumn{1}{c|}{${\check{\theta}}^I$} 
& \multicolumn{1}{c|}{\makecell{Time \\ (sec)}}  
& \multicolumn{1}{c|}{\%- Obj} 
& \multicolumn{1}{c|}{${\check{\theta}}^O$} 
& \multicolumn{1}{c|}{\%${\check{\theta}}$} 
& \multicolumn{1}{c|}{\makecell{Time \\ (sec)}}  
& \multicolumn{1}{c|}{Obj} 
& \multicolumn{1}{c}{${\check{\theta}}^S$} \\ \hline
\endhead

\multicolumn{12}{r}{\textit{(continued on next page)}}\\
\endfoot

\endlastfoot
\multicolumn{12}{c}{$n = 100$} \\ \hline
\multirow{3}{*}{5} & 0.95  & 0 & -735 & 0.9500 & 0 & 0.0003 & 0.9496 & 0.042 & 0.7 & -844 & 0.9129 \\  
& 0.99  & 1 & -735 & 0.9900 & 1 & 0.0002 & 0.9900 & 0.000 & 528 & -844 & 0.9032 \\ 
& 0.999 & 4 & -734 & 0.9990 & 4 & 0.0005 & 0.9990 & 0.000 & 109 & -844 & 0.9036 \\ \hline
\multirow{3}{*}{10} & 0.95  & 1 & -726 & 0.9500 & 2 & 0.0003 & 0.9496 & 0.042 & 64,800 & -727 & 0.8551 \\ 
& 0.99  & 16 & -726 & 0.9900 & 15 & 0.0002 & 0.9899 & 0.010 & 64,800 & -726 & 0.9092 \\ 
& 0.999 & 71 & -725 & 0.9990 & 31 & 0.0002 & 0.9990 & 0.000 & 64,800 & -726 & 0.9474 \\ \hline
\multirow{3}{*}{15} & 0.95  & 2 & -706 & 0.9501 & 1 & 0.0005 & 0.9499 & 0.021 & 64,800 & -707 & 0.9099 \\ 
& 0.99  & 10 & -705 & 0.9900 & 7 & 0.0005 & 0.9900 & 0.000 & 64,800 & -706 & 0.9416  \\ 
& 0.999 & 15 & -705 & 0.9990 & 66 & 0.0002 & 0.9990 & 0.000 & 64,800  & -706 & 0.9648\\ \hline
\multicolumn{12}{c}{$n=500$} \\ \hline
\multirow{3}{*}{5} & 0.95  & 120 & -2,455 & 0.9506 & 107 & 0.0004 & 0.9497 & 0.095 & 1,198 & -2,474 & 0.5709 \\ 
& 0.99  & 109 & -2,454 & 0.9900 & 141 & 0.0001 & 0.9900 & 0.000 & 64,801 & -2,473 & 0.4687 \\ 
& 0.999 & 147 & -2,453 & 0.9990 & 147 & 0.0002 & 0.9990 & 0.000 & 64,804 & -2,473 & 0.4958 \\ \hline
\multirow{3}{*}{10} & 0.95  & 293 & -1,811 & 0.9504 & 321 & 0.0007 & 0.9497 & 0.074 & 64,800 & -1,812 & 0.7463 \\ 
& 0.99  & 192 & -1,810 & 0.9900 & 276 & 0.0004 & 0.9900 & 0.000 & 64,810 & -1,812 & 0.8386 \\ 
& 0.999 & 2,021 & -1,809 & 0.9990 & 1,159 & 0.0001 & 0.9990 & 0.000 & 64,801 & -1,811 & 0.8799 \\ \hline
\multirow{2}{*}{15} & 0.95  & 493 & -2,338 & 0.9502 & 600 & 0.0002 & 0.9497 & 0.053 & 64,802 & -2,339 & 0.7268 \\ 
& 0.99  & 575 & -2,337 & 0.9900 & 714 & 0.0001 & 0.9900 & 0.000 & 64,801 & -2,338 & 0.8804 \\ \hline
\multicolumn{12}{c}{$n=1000$} \\ \hline
\multirow{3}{*}{5} & 0.95  & 904 & -7,794 & 0.9506 & 858 & 0.0000 & 0.9496 & 0.105 & 63,161 & -7,794 & 0.3805 \\ 
& 0.99  & 993 & -7,793 & 0.9901 & 798 & 0.0000 & 0.9900 & 0.010 & 64,801 & -7,794 & 0.6332 \\ 
& 0.999 & 923 & -7,793 & 0.9990 & 876 & 0.0000 & 0.9990 & 0.000 & 64,800 & -7,794 & 0.8626 \\ \hline
\multirow{3}{*}{10} & 0.95  & 3,069 & -5,810 & 0.9501 & 2,651 & 0.0001 & 0.9496 & 0.053 & 64,800 & -5,811 & 0.5688 \\ 
& 0.99  & 2,847 & -5,810 & 0.9900 & 2,147 & 0.0001 & 0.9899 & 0.010 & 64,801 & -5,810 & 0.6370 \\ 
& 0.999 & 6,530 & -5,809 & 0.9990 & 3,722 & 0.0000 & 0.9990 & 0.000 & 64,801 & -5,810 & 0.7392 \\ \hline
\multirow{3}{*}{15} & 0.95  & 3,306 & -4,345 & 0.9748 & 3,691 & 0.0820 & 0.9801\specialfootnote{†}{For these instances, chance constraint appears inactive at target satisfaction probability} & - & 64,800 & -4,349 & 0.7403 \\ 
& 0.99  & 3,427 & -4,348 & 0.9901 & 4,593 & 0.0000 & 0.9900 & 0.010 & 64,800 & -4,349 & 0.7599 \\ 
& 0.999 & 2,984 & -4,348 & 0.9990 & 3,098 & 0.0000 & 0.9990 & 0.000 & 64,800 & -4,349 & 0.8167 \\ \hline
\end{longtable}

\vspace{-20pt}
\begin{longtable}{c | r| r c c| c c c c| c c c}
\caption{\centering PWL-I, PWL-O vs SAA for $\varrho = 2, \varsigma=5$ with equal mixture weight} \label{tab:num-performance-2-5-equal} \\ 
\hline
\hline
\multicolumn{1}{c|}{$K$} 
& \multicolumn{1}{c|}{$\theta$} 
& \multicolumn{1}{c|}{\makecell{Time \\ (sec)}} 
& \multicolumn{1}{c|}{Obj} 
& \multicolumn{1}{c|}{${\check{\theta}}^I$} 
& \multicolumn{1}{c|}{\makecell{Time \\ (sec)}}  
& \multicolumn{1}{c|}{\%- Obj} 
& \multicolumn{1}{c|}{${\check{\theta}}^O$} 
& \multicolumn{1}{c|}{\%${\check{\theta}}$} 
& \multicolumn{1}{c|}{\makecell{Time \\ (sec)}}  
& \multicolumn{1}{c|}{Obj} 
& \multicolumn{1}{c}{${\check{\theta}}^S$} \\ \hline
\endfirsthead
\hline
\multicolumn{1}{c|}{$K$} 
& \multicolumn{1}{c|}{$\theta$} 
& \multicolumn{1}{c|}{\makecell{Time \\ (sec)}} 
& \multicolumn{1}{c|}{Obj} 
& \multicolumn{1}{c|}{${\check{\theta}}^I$} 
& \multicolumn{1}{c|}{\makecell{Time \\ (sec)}}  
& \multicolumn{1}{c|}{\%- Obj} 
& \multicolumn{1}{c|}{${\check{\theta}}^O$} 
& \multicolumn{1}{c|}{\%${\check{\theta}}$} 
& \multicolumn{1}{c|}{\makecell{Time \\ (sec)}}  
& \multicolumn{1}{c|}{Obj} 
& \multicolumn{1}{c}{${\check{\theta}}^S$} \\ \hline
\endhead

\multicolumn{12}{r}{\textit{(continued on next page)}}\\
\endfoot

\endlastfoot
\multicolumn{12}{c}{$n = 100$} \\ \hline
\multirow{3}{*}{5}  & 0.95  & 0.44 & -988 & 0.9500 & 0.337 & 0.0000 & 0.9498 & 0.021 & 0.244  & -894 & 1.0000 \\ 
 & 0.99  & 0.45 & -988 & 0.9900 & 0.364 & 0.0000 & 0.9900 & 0.000 & 4.01   & -894 & 1.0000 \\ 
 & 0.999 & 0.67 & -988 & 0.9990 & 0.592 & 0.0001 & 0.9989 & 0.010 & 11.56  & -894 & 1.0000 \\ \hline
\multirow{3}{*}{10} & 0.95  & 1.34 & -891 & 0.9500 & 3.578 & 0.0000 & 0.9499 & 0.011 & 797.5  & -891 & 0.9438 \\ 
 & 0.99  & 2.10 & -890 & 0.9900 & 1.921 & 0.0001 & 0.9899 & 0.010 & 64,819  & -891 & 0.9758 \\ 
 & 0.999 & 16.1 & -890 & 0.9990 & 24.87 & 0.0003 & 0.9989 & 0.010 & 16,043  & -890 & 0.9973 \\ \hline
\multirow{3}{*}{15} & 0.95  & 1.73 & -927 & 0.9500 & 0.937 & 0.0003 & 0.9499 & 0.011 & 5,404   & -928 & 0.9388 \\ 
 & 0.99  & 20.1 & -924 & 0.9900 & 13.51 & 0.0001 & 0.9899 & 0.010 & 64,800  & -925 & 0.9815 \\ 
 & 0.999 & 10.1 & -922 & 0.9990 & 25.11 & 0.0009 & 0.9989 & 0.010 & 64,801  & -924 & 0.9940 \\ \hline
\multicolumn{12}{c}{$n=500$} \\ \hline
\multirow{3}{*}{5}  & 0.95  & 117 & -2,344 & 0.9508 & 112.3 & 0.0007 & 0.9496 & 0.021 & 1,009   & -2,332 & 0.7247 \\ 
 & 0.99  & 118 & -2,343 & 0.9902 & 154.6 & 0.0007 & 0.9899 & 0.030 & 33,944  & -2,332 & 0.7345 \\ 
 & 0.999 & 169 & -2,341 & 0.9990 & 175.2 & 0.0003 & 0.9989 & 0.010 & 64,804  & -2,330 & 0.7553 \\ \hline
\multirow{3}{*}{10} & 0.95  & 284 & -2,500 & 0.9500 & 234.2 & 0.0002 & 0.9497 & 0.032 & 64,801  & -2,501 & 0.9152 \\ 
 & 0.99  & 185 & -2,499 & 0.9900 & 251.9 & 0.0003 & 0.9899 & 0.010 & 64,804  & -2,500 & 0.9239 \\ 
 & 0.999 & 685 & -2,498 & 0.9990 & 668   & 0.0002 & 0.9989 & 0.010 & 64,800  & -2,499 & 0.9697 \\ \hline
\multirow{3}{*}{15} & 0.95  & 586   & -2,187 & 0.9503 & 709.4 & 0.0008 & 0.9497 & 0.063 & 64,800  & -2,188 & 0.8253 \\ 
 & 0.99  & 691   & -2,185 & 0.9900 & 814.1 & 0.0004 & 0.9899 & 0.010 & 64,801  & -2,187 & 0.8708 \\ 
 & 0.999 & 19,812 & -2,184 & 0.9990 & 7,811  & 0.0001 & 0.9989 & 0.010 & 64,806  & -2,186 & 0.9031 \\ \hline
\multicolumn{12}{c}{$n=1000$} \\ \hline
\multirow{3}{*}{5}  & 0.95  & 1,033 & -4,556 & 0.9507 & 1,007 & 0.0003 & 0.9498 & 0.095 & 32,337  & -4,557 & 0.5991 \\ 
 & 0.99  & 1,197 & -4,554 & 0.9900 & 942.3 & 0.0001 & 0.9899 & 0.010 & 64,800  & -4,556 & 0.8160 \\ 
 & 0.999 & 1,087 & -4,553 & 0.9990 & 1,008 & 0.0001 & 0.9989 & 0.010 & 57,555  & -4,555 & 0.9204 \\ \hline
\multirow{3}{*}{10} & 0.95  & 2,324 & -7,286 & 0.9503 & 2,414 & 0.0001 & 0.9495 & 0.084 & 64,800  & -7,286 & 0.6534 \\ 
 & 0.99  & 1,440 & -7,285 & 0.9900 & 2,099 & 0.0000 & 0.9899 & 0.010 & 64,800  & -7,286 & 0.8194 \\ 
 & 0.999 & 5,683 & -7,285 & 0.9990 & 2,975 & 0.0000 & 0.9989 & 0.010 & 64,802  & -7,285 & 0.8988 \\ \hline
\multirow{3}{*}{15} & 0.95  & 3,960 & -4,046 & 0.9501 & 3,492 & 0.0000 & 0.9500 & 0.011 & 64,800  & -4,047 & 0.8121 \\ 
 & 0.99  & 557.2 & -4,046 & 0.9901 & 6,801 & 0.0000 & 0.9900 & 0.010  & 64,800  & -4,047 & 0.8997 \\ 
 & 0.999 & 19,580 & -4,044 & 0.9990 & 8,935 & 0.0000 & 0.9989 & 0.010 & 64,800  & -4,046 & 0.9431 \\ \hline
\end{longtable}

\vspace{-20pt}
\begin{longtable}{c r|r r r|c|r r r |r r r}
\caption{\centering Computational performance summary of PWL-I, PWL-O and SAA based approaches with equal mixture weights and 18-hour (64,800s) time limit across different hyperparameters} \label{tab:num-performance-sum} \\ 
\hline
 & & \multicolumn{3}{c|}{\makecell{Average Time \\ (sec)}} & ${\check{\theta}^S}$ & \multicolumn{3}{c|}{\makecell{\% of Instances not \\ solved to \\ Optimality Gap }} & \multicolumn{3}{c}{\makecell{Avg. ${\check{\theta}^I, \check{\theta}^O, \check{\theta}^S}$ for \\ Instances not \\ Solved to \\ Optimality Gap}} 
 \\ \hline
$K$ & $\theta$ & PWL-I & PWL-O & SAA & SAA & PWL-I & PWL-O & SAA & PWL-I & PWL-O & SAA \\ \hline
\endfirsthead
\hline 
$K$ & $\theta$ & PWL-I & PWL-O & SAA & SAA & PWL-I & PWL-O & SAA & PWL-I & PWL-O & SAA \\ \hline
\endhead

\multicolumn{12}{r}{\textit{(continued on next page)}}\\
\endfoot

\endlastfoot
\multicolumn{12}{c}{$n = 100$} \\ \hline
\multirow{3}{*}{5}  & 0.95  & 0.65  & 0.47  & 0.34    & 0.9854 & 0 & 0 & 0     & - & - & - \\ 
                   & 0.99  & 0.80  & 0.68  & 108     & 0.9838 & 0 & 0 & 0     & - & - & - \\ 
                   & 0.999 & 2.20  & 2.10  & 39      & 0.9839 & 0 & 0 & 0     & - & - & - \\ \hline
\multirow{3}{*}{10} & 0.95  & 2.30  & 1.70  & 34,648   & 0.9182 & 0 & 0 & 50.0  & - & - & 0.89 \\ 
                   & 0.99  & 4.50  & 3.40  & 64,804   & 0.9625 & 0 & 0 & 100   & - & - & 0.96 \\ 
                   & 0.999 & 27.4  & 33.0  & 56,674   & 0.9817 & 0 & 0 & 83.3  & - & - & 0.97 \\ \hline
\multirow{3}{*}{15} & 0.95  & 15.4  & 2.50  & 35,150   & 0.9201 & 0 & 0 & 40.0  & - & - & 0.89 \\ 
                   & 0.99  & 65.8  & 16.7  & 64,801   & 0.9552 & 0 & 0 & 100   & - & - & 0.95 \\ 
                   & 0.999 & 143   & 161   & 64,800   & 0.9749 & 0 & 0 & 100   & - & - & 0.97 \\ \hline

\multicolumn{12}{c}{$n = 500$} \\ \hline
\multirow{3}{*}{5}  & 0.95  & 100   & 104   & 39,315   & 0.4511 & 0 & 0 & 50.0  & - & - & 0.47 \\ 
                   & 0.99  & 104   & 143   & 59,658   & 0.4745 & 0 & 0 & 83.3  & - & - & 0.42 \\ 
                   & 0.999 & 696   & 555   & 63,038   & 0.5061 & 0 & 0 & 83.3  & - & - & 0.49 \\ \hline
\multirow{3}{*}{10} & 0.95  & 954   & 901   & 64,800   & 0.7256 & 0 & 0 & 100   & - & - & 0.72 \\ 
                   & 0.99  & 364   & 696   & 64,806   & 0.8744 & 0 & 0 & 100   & - & - & 0.87 \\ 
                   & 0.999 & 2,227  & 1,923  & 64,801   & 0.8918 & 0 & 0 & 100   & - & - & 0.89 \\ \hline
\multirow{3}{*}{15} & 0.95  & 10,555 & 9502 & 64801  & 0.7393 & 0 & 0 & 100   & - & - & 0.73 \\ 
                   & 0.99  & 5,960  & 4,290  & 64,801   & 0.8490 & 0 & 0 & 100   & - & - & 0.84 \\ 
                   & 0.999 & 6,585  & 21,120 & 64,813   & 0.8924 & 0 & 16.7 & 100 & - & 0.999 & 0.89 \\ \hline

\multicolumn{12}{c}{$n=1000$} \\ \hline
\multirow{3}{*}{5}  & 0.95  & 670   & 1,136  & 53,418   & 0.4996 & 0 & 0 & 50.0  & - & - & 0.36 \\ 
                   & 0.99  & 778   & 1,355  & 64,805   & 0.7633 & 0 & 0 & 100   & - & - & 0.76 \\ 
                   & 0.999 & 952   & 1,764  & 63,593   & 0.9059 & 0 & 0 & 83.3  & - & - & 0.90 \\ \hline
\multirow{3}{*}{10} & 0.95  & 8,414  & 10,823 & 64,800   & 0.6878 & 0 & 0 & 100   & - & - & 0.68 \\ 
                   & 0.99  & 5,789  & 9,306  & 64,801   & 0.8001 & 0 & 3.4 & 100  & - & 0.9999 & 0.80 \\ 
                   & 0.999 & 5,926  & 7,439  & 64,803   & 0.8724 & 0 & 0 & 100   & - & - & 0.87 \\ \hline
\multirow{3}{*}{15} & 0.95  & 18,411 & 6,833  & 64,800   & 0.7293 & 13.3 & 6.7 & 100  & 0.9999 & 0.9939 & 0.72 \\ 
                   & 0.99  & 19,892 & 21,198 & 64,800   & 0.8548 & 27.5 & 26.7 & 100 & 0.9998 & 0.9998 & 0.85 \\ 
                   & 0.999 & 22,210 & 28,666 & 64,801   & 0.8915 & 13.3 & 36.7 & 100 & 0.9998 & 0.9996 & 0.89 \\ \hline

\end{longtable}

\begin{longtable}{r r r r|r c|r c|r c|r c}
\caption{\centering Effect of number of breakpoints on solution time and optimality gap for PWL-I $(n=500)$. Solution time is reported in \emph{hours} (hr) from now onwards for ease of reading the table. Gap \% represents the optimality gap (in percentage) obtained at termination from Gurobi.} \label{tab: breakpoint-effect-PWL-I}\\
\hline
\multirow{3}{*}{$\varrho$} & 
\multirow{3}{*}{$\varsigma$} & 
\multirow{3}{*}{$K$} & 
\multirow{3}{*}{$\theta$} &
  \multicolumn{4}{c|}{\textit{Equal weight}} &
  \multicolumn{4}{c}{\textit{Different weight}} \\
\cline{5-12}
& & & 
  & \multicolumn{2}{c|}{$\tau = (1-\theta)/2$} 
  & \multicolumn{2}{c|}{$\tau = (1-\theta)/50$}
  & \multicolumn{2}{c|}{$\tau = (1-\theta)/2$}
  & \multicolumn{2}{c}{$\tau = (1-\theta)/50$} \\
\cline{5-12}
& & & 
  & Time (hr) & Gap \% 
  & Time (hr) & Gap \% 
  & Time (hr) & Gap \% 
  & Time (hr) & Gap \% \\ 
\hline
\endfirsthead

\multicolumn{12}{c}{{\tablename\ \thetable\ -- \textit{continued from previous page}}} \\
\hline
\multirow{3}{*}{$\varrho$} & 
\multirow{3}{*}{$\varsigma$} & 
\multirow{3}{*}{$K$} & 
\multirow{3}{*}{$\theta$} &
  \multicolumn{4}{c|}{\textit{Equal weight}} &
  \multicolumn{4}{c}{\textit{Different weight}} \\
\cline{5-12}
& & & 
  & \multicolumn{2}{c|}{$\tau = (1-\theta)/2$} 
  & \multicolumn{2}{c|}{$\tau = (1-\theta)/50$}
  & \multicolumn{2}{c|}{$\tau = (1-\theta)/2$}
  & \multicolumn{2}{c}{$\tau = (1-\theta)/50$} \\
\cline{5-12}
& & & 
  & Time (hr) & Gap \% 
  & Time (hr) & Gap \% 
  & Time (hr) & Gap \% 
  & Time (hr) & Gap \% \\ 
\hline
\endhead
\hline \multicolumn{12}{r}{\textit{(continued on next page)}}\\
\endfoot
\endlastfoot

10 &  5 & 10 & 0.99
   & 0.0\footnote{Since time is reported in hours, we report 0.0 when consumed time is less than a minute} & 0     & 0.1 & 0    
   & 3.3 & 0     & 4.6 & 0    \\  
10 &  5 & 10 & 0.95
   & 3.8 & 0     & 4.7 & 0    
   & 3.8 & 0     & 4.8 & 0    \\
10 &  5 & 15 & 0.99
   & 4.7 & 0     & 6.1 & 0    
   & 5.0 & 0     & 6.4 & 0    \\
10 & 10 & 15 & 0.99
   & 5.4 & 0     & 6.2 & 0    
   & 5.3 & 0     & 6.2 & 0    \\
10 & 10 & 15 & 0.95
   & 2.1 & 0     & 6.5 & 0    
   & 5.0 & 0     & 6.8 & 0    \\
10 & 15 & 10 & 0.95
   & 3.1 & 0     & 3.8 & 0    
   & 1.1 & 0     & 1.4 & 0    \\
10 & 20 & 15 & 0.99
   & 5.1 & 0     & 7.0 & 0    
   & 3.1 & 0     & 3.4 & 0    \\
10 & 20 & 15 & 0.95
   & 2.1 & 0     & 3.2 & 0    
   & 3.5 & 0     & 5.1 & 0    \\
15 &  5 & 10 & 0.95
   & 2.5 & 0     & 3.0 & 0    
   & 1.8 & 0     & 2.2 & 0    \\
15 &  5 & 15 & 0.999
   & 0.1 & 0     & 0.0 & 0    
   &10.7 & 0     &13.5 & 0    \\
15 & 10 & 10 & 0.99
   & 1.1 & 0     & 2.3 & 0    
   & 0.8 & 0     & 1.1 & 0    \\
15 & 15 & 10 & 0.99
   & 0.0 & 0     & 3.0 & 0    
   & 3.4 & 0     & 5.0 & 0    \\
15 & 20 & 10 & 0.95
   & 0.0 & 0     & 0.0 & 0    
   & 0.1 & 0     & 0.1 & 0    \\
20 &  2 & 15 & 0.95
   & 2.1 & 0     & 3.5 & 0    
   & 3.2 & 0     & 3.5 & 0    \\
20 &  5 & 15 & 0.999
   & 2.1 & 0     &18.0 & 0    
   & 5.3 & 0     &14.2 & 0    \\
20 & 10 & 10 & 0.99
   & 2.1 & 0     & 4.0 & 0    
   & 5.6 & 0     & 6.0 & 0    \\
20 & 10 & 15 & 0.999
   & 8.8 & 0     &11.4 & 0    
   & 5.2 & 0     &11.4 & 0    \\
20 & 15 & 10 & 0.99
   & 2.2 & 0     & 3.1 & 0    
   & 8.3 & 0     &10.3 & 0    \\
20 & 15 & 15 & 0.95
   & 3.1 & 0     &17.8 & 0    
   & 5.0 & 0     &12.7 & 0    \\
20 & 20 & 10 & 0.99
   & 0.0 & 0     & 0.0 & 0    
   & 0.0 & 0     & 0.0 & 0    \\
20 & 20 & 15 & 0.95
   & 5.8 & 0     &17.9 & 0    
   & 3.7 & 0     &17.0 & 0    \\
25 &  2 & 15 & 0.95
   & 3.7 & 0     & 4.0 & 0    
   & 4.5 & 0     & 5.9 & 0    \\
25 & 10 & 15 & 0.999
   & 5.8 & 0     & 6.8 & 0    
   & 5.8 & 0     & 6.8 & 0    \\
25 & 15 & 10 & 0.99
   & 1.0 & 0     & 1.5 & 0    
   & 9.6 & 0     &10.8 & 0    \\

\bottomrule
\end{longtable}

\begin{longtable}{r r r r|r c|r c|r c|r c}
\caption{\centering Effect of number of breakpoints on solution time and optimality gap for PWL-O $(n=500)$. Solution time is reported in \emph{hours} (hr). Gap \% represents the optimality gap (in percentage) obtained at termination from Gurobi.}
\label{tab: breakpoint-effect-PWL-O}\\
\hline
\multirow{3}{*}{$\varrho$} & 
\multirow{3}{*}{$\varsigma$} & 
\multirow{3}{*}{$K$} & 
\multirow{3}{*}{$\theta$} &
  \multicolumn{4}{c|}{\textit{Equal weight}} &
  \multicolumn{4}{c}{\textit{Different weight}} \\
\cline{5-12}
& & & 
  & \multicolumn{2}{c|}{$\tau = (1-\theta)/2$} 
  & \multicolumn{2}{c|}{$\tau = (1-\theta)/50$}
  & \multicolumn{2}{c|}{$\tau = (1-\theta)/2$}
  & \multicolumn{2}{c}{$\tau = (1-\theta)/50$} \\
\cline{5-12}
& & & 
  & Time (hr) & Gap \% 
  & Time (hr) & Gap \% 
  & Time (hr) & Gap \% 
  & Time (hr) & Gap \% \\ 
\hline
\endfirsthead

\multicolumn{12}{c}{{\tablename\ \thetable\ -- \textit{continued from previous page}}} \\
\hline
\multirow{3}{*}{$\varrho$} & 
\multirow{3}{*}{$\varsigma$} & 
\multirow{3}{*}{$K$} & 
\multirow{3}{*}{$\theta$} &
  \multicolumn{4}{c|}{\textit{Equal weight}} &
  \multicolumn{4}{c}{\textit{Different weight}} \\
\cline{5-12}
& & & 
  & \multicolumn{2}{c|}{$\tau = (1-\theta)/2$} 
  & \multicolumn{2}{c|}{$\tau = (1-\theta)/50$}
  & \multicolumn{2}{c|}{$\tau = (1-\theta)/2$}
  & \multicolumn{2}{c}{$\tau = (1-\theta)/50$} \\
\cline{5-12}
& & & 
  & Time (hr) & Gap \% 
  & Time (hr) & Gap \% 
  & Time (hr) & Gap \% 
  & Time (hr) & Gap \% \\ 
\hline
\endhead
\hline \multicolumn{12}{r}{\textit{(continued on next page)}}\\
\endfoot
\endlastfoot

 5 &  5 & 10 & 0.95  
   & 0.1 & 7e-4 & 0.1 & 7e-4 & 0.1 & 4e-4 & 0.1 & 4e-4 \\

 5 &  5 & 15 & 0.999 
   & 0.1 & 2e-4 & 0.1 & 2e-4 & 0.2 & 2e-4 & 0.4 & 2e-5 \\

10 &  5 & 10 & 0.95  
   & 0.1 & 8e-5 & 0.1 & 8e-5 & 0.1 & 1e-4 & 0.1 & 1e-4 \\

10 &  5 & 10 & 0.99  
   & 0.1 & 3e-4 & 0.1 & 3e-4 & 0.1 & 3e-4 & 0.2 & 5e-5 \\

10 &  5 & 15 & 0.99  
   & 0.1 & 2e-4 & 0.2 & 2e-4 & 0.1 & 2e-4 & 0.1 & 2e-4 \\

10 & 10 & 15 & 0.95  
   & 0.1 & 1e-2 & 0.1 & 1e-2 & 0.1 & 2e-3 & 0.2 & 2e-3 \\

10 & 10 & 15 & 0.99  
   & 0.2 & 8e-4 & 1.8 & 2e-4 & 3.6 & 4e-4 & 4.5 & 2e-4 \\

10 & 15 & 10 & 0.95  
   & 1.6 & 2e-4 & 1.9 & 2e-4 & 0.1 & 1e-4 & 0.1 & 1e-4 \\

10 & 20 & 15 & 0.95  
   & 2.2 & 2e-4 & 2.7 & 2e-4 & 2.5 & 1e-4 & 3.2 & 1e-4 \\

10 & 20 & 15 & 0.99  
   & 2.0 & 5e-4 & 2.0 & 2e-4 & 5.7 & 4e-4 & 5.8 & 2e-4 \\

15 &  5 & 10 & 0.95  
   & 0.1 & 1e-4 & 0.1 & 1e-4 & 0.1 & 3e-5 & 0.1 & 3e-5 \\

15 &  5 & 15 & 0.999 
   & 2.6 & 1e-4 &18.0 & 2e-5 & 3.3 & 1e-4 & 3.4 & 1e-4 \\

15 & 10 & 10 & 0.99  
   & 1.0 & 3e-4 & 1.7 & 3e-4 & 0.1 & 2e-4 & 0.1 & 2e-4 \\

15 & 15 & 10 & 0.99  
   & 0.1 & 2e-4 & 0.1 & 2e-4 & 0.1 & 1e-3 & 0.1 & 1e-3 \\

15 & 20 & 10 & 0.95  
   & 0.1 & 1e-4 & 0.1 & 1e-4 & 0.1 & 6e-5 & 0.1 & 6e-5 \\

20 &  2 & 15 & 0.95  
   & 0.1 & 2e-4 & 0.1 & 2e-4 & 0.1 & 1e-4 & 0.1 & 1e-4 \\

20 &  5 & 15 & 0.999 
   & 5.1 & 1e-4 &18.0 & 6e-5 & 5.3 & 3e-5 & 6.3 & 1e-5 \\

20 & 10 & 10 & 0.99  
   & 0.1 & 1e-4 & 0.1 & 1e-4 & 0.1 & 1e-4 & 0.1 & 1e-4 \\

20 & 10 & 15 & 0.999 
   & 2.5 & 6e-5 &12.3 & 2e-5 & 8.3 & 1e-4 &11.4 & 2e-5 \\

20 & 15 & 10 & 0.99  
   & 0.1 & 2e-4 & 0.1 & 2e-4 & 0.1 & 2e-4 & 0.1 & 2e-4 \\

20 & 20 & 10 & 0.99  
   & 0.1 & 2e-4 & 0.1 & 2e-4 & 0.1 & 2e-4 & 0.1 & 2e-4 \\

20 & 20 & 15 & 0.95  
   & 4.1 & 5e-5 &18.0 & 1e-3 & 3.1 & 3e-5 & 3.4 & 3e-5 \\

25 &  2 & 15 & 0.95  
   & 0.1 & 4e-5 & 0.1 & 4e-5 & 0.1 & 1e-5 & 0.1 & 1e-5 \\

25 & 10 & 15 & 0.999 
   &18.0 & 3e-5 &18.0 & 3e-5 &18.0 & 3e-5 &18.0 & 3e-5 \\

25 & 15 & 10 & 0.99  
   & 1.8 & 4e-4 & 1.9 & 4e-4 & 0.1 & 3e-3 & 0.1 & 3e-3 \\
\hline
\end{longtable}

\begin{longtable}{r r r r|r c|r c|r c|r c}
\caption{\centering Effect of the number of breakpoints on solution time and optimality gap for PWL-I $(n=1000)$. Solution time is reported in \emph{hours} (hr). Gap \% represents the optimality gap (in percentage) obtained at termination from Gurobi.} 
\label{tab: breakpoint-effect-PWL-I-1000}\\
\hline
\multirow{3}{*}{$\varrho$} & 
\multirow{3}{*}{$\varsigma$} & 
\multirow{3}{*}{$K$} & 
\multirow{3}{*}{$\theta$} &
  \multicolumn{4}{c|}{\textit{Equal weight}} &
  \multicolumn{4}{c}{\textit{Different weight}} \\
\cline{5-12}
& & & 
  & \multicolumn{2}{c|}{$\tau = (1-\theta)/2$} 
  & \multicolumn{2}{c|}{$\tau = (1-\theta)/50$}
  & \multicolumn{2}{c|}{$\tau = (1-\theta)/2$}
  & \multicolumn{2}{c}{$\tau = (1-\theta)/50$} \\
\cline{5-12}
& & & 
  & Time (hr) & Gap \% 
  & Time (hr) & Gap \% 
  & Time (hr) & Gap \% 
  & Time (hr) & Gap \% \\ 
\hline
\endfirsthead

\multicolumn{12}{c}{{\tablename\ \thetable\ -- \textit{continued from previous page}}} \\
\hline
\multirow{3}{*}{$\varrho$} & 
\multirow{3}{*}{$\varsigma$} & 
\multirow{3}{*}{$K$} & 
\multirow{3}{*}{$\theta$} &
  \multicolumn{4}{c|}{\textit{Equal weight}} &
  \multicolumn{4}{c}{\textit{Different weight}} \\
\cline{5-12}
& & & 
  & \multicolumn{2}{c|}{$\tau = (1-\theta)/2$} 
  & \multicolumn{2}{c|}{$\tau = (1-\theta)/50$}
  & \multicolumn{2}{c|}{$\tau = (1-\theta)/2$}
  & \multicolumn{2}{c}{$\tau = (1-\theta)/50$} \\
\cline{5-12}
& & & 
  & Time (hr) & Gap \% 
  & Time (hr) & Gap \% 
  & Time (hr) & Gap \% 
  & Time (hr) & Gap \% \\ 
\hline
\endhead
\hline \multicolumn{12}{r}{\textit{(continued on next page)}}\\
\endfoot
\endlastfoot


  2 & 10 & 15 & 0.999 
    & 0.3 & 0     & 0.3 & 0    
    & 0.3 & 0     & 0.3 & 0    \\

  2 & 15 & 15 & 0.999 
    & 0.2 & 0     & 0.2 & 0    
    & 0.3 & 0     & 0.5 & 0    \\

  2 & 20 & 15 & 0.999 
    & 0.2 & 0     & 0.3 & 0    
    & 1.0 & 0     & 1.2 & 0    \\

 10 &  2 & 15 & 0.999 
    & 0.3 & 0     & 0.3 & 0    
    &  -- & --    &  -- & --   \\

 10 & 10 & 15 & 0.95  
    & 0.2 & 0     & 0.2 & 0    
    & 0.3 & 0     & 0.3 & 1.1e-5 \\

 10 & 20 & 15 & 0.999 
    & 0.4 & 0     & 0.7 & 0    
    &  -- & --    &  -- & --   \\

 15 &  5 & 10 & 0.999 
    & 0.3 & 0     & 0.3 & 0    
    &  -- & --    &  -- & --   \\

 20 &  5 & 10 & 0.999 
    & 0.1 & 0     & 0.2 & 0    
    &  -- & --    &  -- & --   \\

 25 &  5 & 15 & 0.95  
    &18.0 & 0.19     &18.0 & 0.19    
    &  -- & --    &  -- & --   \\

 25 & 20 & 15 & 0.999 
    &  -- & --    &  -- & --   
    &18.0 & 0.008     &18 & 0.008  \\
\hline
\end{longtable}

\begin{longtable}{r r r r|r c|r c|r c|r c}
\caption{\centering Effect of the number of breakpoints on solution time and optimality gap for PWL-O $(n=1000)$. Solution time is reported in \emph{hours} (hr). Gap \% represents the optimality gap (in percentage) obtained at termination from Gurobi.} 
\label{tab: breakpoint-effect-PWL-O-1000}\\
\hline
\multirow{3}{*}{$\varrho$} & 
\multirow{3}{*}{$\varsigma$} & 
\multirow{3}{*}{$K$} & 
\multirow{3}{*}{$\theta$} &
  \multicolumn{4}{c|}{\textit{Equal weight}} &
  \multicolumn{4}{c}{\textit{Different weight}} \\
\cline{5-12}
& & & 
  & \multicolumn{2}{c|}{$\tau = (1-\theta)/2$} 
  & \multicolumn{2}{c|}{$\tau = (1-\theta)/50$}
  & \multicolumn{2}{c|}{$\tau = (1-\theta)/2$}
  & \multicolumn{2}{c}{$\tau = (1-\theta)/50$} \\
\cline{5-12}
& & & 
  & Time (hr) & Gap \% 
  & Time (hr) & Gap \% 
  & Time (hr) & Gap \% 
  & Time (hr) & Gap \% \\ 
\hline
\endfirsthead

\multicolumn{12}{c}{{\tablename\ \thetable\ -- \textit{continued from previous page}}} \\
\hline
\multirow{3}{*}{$\varrho$} & 
\multirow{3}{*}{$\varsigma$} & 
\multirow{3}{*}{$K$} & 
\multirow{3}{*}{$\theta$} &
  \multicolumn{4}{c|}{\textit{Equal weight}} &
  \multicolumn{4}{c}{\textit{Different weight}} \\
\cline{5-12}
& & & 
  & \multicolumn{2}{c|}{$\tau = (1-\theta)/2$} 
  & \multicolumn{2}{c|}{$\tau = (1-\theta)/50$}
  & \multicolumn{2}{c|}{$\tau = (1-\theta)/2$}
  & \multicolumn{2}{c}{$\tau = (1-\theta)/50$} \\
\cline{5-12}
& & & 
  & Time (hr) & Gap \% 
  & Time (hr) & Gap \% 
  & Time (hr) & Gap \% 
  & Time (hr) & Gap \% \\ 
\hline
\endhead

\hline \multicolumn{12}{r}{\textit{(continued on next page)}}\\
\endfoot
\endlastfoot


 2 & 10 & 15 & 0.999 
   & 1.1   & 2.6e-5 & 18.0  & 4.1e-4
   & 0.8   & 0.0009  & 18.0  & 2.6e-4 \\

 2 & 15 & 15 & 0.999 
   & 0.1   & 0.0011  & 18.0  & 0.0011
   & 18.0  & 0.0075  & 18.0  & 0.0075 \\

 2 & 20 & 15 & 0.999 
   & 18.0  & 0.0009  & 18.0  & 0.0009
   & 1.0   & 0.0009  & 18.0  & 0.0009 \\

 5 & 20 & 15 & 0.99  
   & 18.0  & 0.0083  & 18.0  & 0.0087
   &   –   & –       &   –   & –      \\

10 &  2 & 15 & 0.999 
   & 18.0  & 0.3056  & 18.0  & 0.3056
   &   –   & –       &   –   & –      \\

10 &  5 & 15 & 0.999 
   & 1.1   & 5.0e-5  & 1.2   & 1.7e-5
   & 10.9  & 0.0002  & 18.0  & 0.0002 \\

10 & 10 & 15 & 0.95  
   & 1.1   & 3.9e-5  & 8.1   & 0.0048
   & 18.0  & 2.0e-5  & 0.8   & –      \\

10 & 20 & 15 & 0.999 
   & 0.8   & 0.0002  & 3.7   & 1.9e-5
   & 18.0  & 0.0059  & 18.0  & 0.0059 \\

15 &  5 & 10 & 0.999 
   & 13.3  & 1.1e-5  & 18.0  & 2.6e-5
   &   –   & –        &   –   & –      \\

15 & 15 & 15 & 0.999 
   & 1.0   & 0.0002  & 18.0  & 9.7e-5
   & 18.0  & 0.0404  & 18.0  & 0.0404  \\

20 &  5 & 10 & 0.999 
   & 0.5   & 2.4e-5  & 0.8   & 2.0e-5
   & 12.9  & 2.1e-6  & 13.0  & 2.4e-5 \\

20 & 10 & 15 & 0.999 
   & 0.8   & 5.3e-5  & 18.0  & 0.0008
   &   –   & –        &   –   & –      \\

20 & 20 & 15 & 0.95  
   & 0.9   & 0.0056  & 11.0  & 0.0056
   &   –   & –        &   –   & –      \\

25 &  5 & 10 & 0.999 
   & 0.5   & 3.0e-1  & 0.7   & 1.3e-5
   &   –   & –        &   –   & –      \\

25 &  5 & 15 & 0.95  
   & 0.2   & 0.0043  & 0.3   & 0.0043
   & 18.0  & 7.4e-6  & 18.0  & 0.0043 \\

25 & 15 & 15 & 0.95  
   & 0.6   & 7.4e-6  & 18.0  & 0.0047
   & 15.9  & 1.7e-6  & 18.0  & 1.7e-6 \\

25 & 20 & 15 & 0.999 
   & 18.0  & 0.0009  & 18.0  & 0.0009
   & 18.0  & 0.0368  & 18.0  & 0.0368  \\
\hline
\end{longtable}

\section{Regression Tables}\label{appndx:regression-table}

\begin{table}[h]
\small
    \caption{Regression Results for PWL-I Approximations}
    \label{tab:regression_results1}
    \centering
    \begin{tabular}{l|cc|cc|cc}
        \hline
        & \multicolumn{2}{c}{$\theta = 0.95$} & \multicolumn{2}{c}{$\theta = 0.99$} & \multicolumn{2}{c}{$\theta = 0.999$} \\
        \cline{2-7}
        & Estimate & Pr($>|t|$) & Estimate & Pr($>|t|$) & Estimate & Pr($>|t|$) \\
        \hline
        Intercept        & -8.12e+3 & 2.37e-05 ***     & -6.60e+3  & 0.00135 **      & -1.06e+4  & 1.23e-06 *** \\
        
        $n$              & 4.29e+1  & 0.0154 *         & 6.09e+1   & 0.00150 **      & 1.23e+2  & 2.00e-09 ***\\
        
        $K$              & 6.27e+2  & 1.99e-08 ***     & 5.79e+2   & 1.20e-06 ***    & 8.46e+2  & 2.86e-11 ***\\
        
        $\varrho$        & -4.44e+1 & 0.5046           & -5.70e+1  & 0.42722         & -8.40e+1 & 0.26805 \\
        
        $\varsigma$      & 1.26e+2  & 0.0459 *         & 6.83e+1  & 0.31215         & 2.32e+2  & 0.00124 **\\
        
        MIP-gap          & 2.63e+2  & $<$2e-16 ***     & 2.50e+2  & $<$2e-16 ***    & 3.61e+2  & 0.00882 ** \\
        
        avg$\frac{\nu^{\max}}{\nu^{\min}}$  & -1.09e+2 & 0.7093 & 5.12e+1 & 0.87082 & -1.48e+2 & 0.65528 \\
        
        $\bar{d}_{\mu}$  & 2.06e+1  & 0.7439          & -1.19e+2 & 0.08258 .      & -2.81e+2 & 9.85e-05 ***\\
        
        max$\frac{\nu^{\max}}{\nu^{\min}}$   & -2.89e+0 & 0.9063         & -1.49e+1 & 0.57307         & 1.51e+1  & 0.58955 \\
        
        $d^{\max}_{\mu}$ & 9.98e+0  & 0.7206          & 6.76e+1  & 0.02670 *       & 1.28e+2  & 7.25e-05 ***\\
        
        weight           & 1.43e+3  & 0.0782 .        & -2.11e+2 & 0.80926         & -1.20e+3 & 0.19495\\
        \hline
        \multicolumn{7}{c}{Signif. codes:   $<$0.001 ‘***’ 0.001 ‘**’ 0.01 ‘*’ 0.05 ‘.’} \\
        \hline
    \end{tabular}
\end{table}

\begin{table}[h]
\vspace{-10pt}
\small
    \caption{Regression Results for PWL-O Approximations}
    \label{tab:regression_results2}
    \centering
    \begin{tabular}{l|cc|cc|cc}
        \hline
        & \multicolumn{2}{c}{$\theta = 0.95$} & \multicolumn{2}{c}{$\theta = 0.99$} & \multicolumn{2}{c}{$\theta = 0.999$} \\
        \cline{2-7}
        & Estimate & Pr($>|t|$) & Estimate & Pr($>|t|$) & Estimate & Pr($>|t|$) \\
        \hline
        Intercept        & -6.76e+03 & 3.12e-05 ***    & -1.00e+04 & 1.26e-06 ***    & -1.54e+04 & 4.03e-08 *** \\
        $n$              & 5.32e+01  & 3.97e-04 ***    & 9.19e+01  & 1.86e-06 ***    & 1.33e+02  & 3.51e-07 *** \\
        $K$              & 4.72e+02  & 5.49e-07 ***    & 7.89e+02  & 4.75e-11 ***    & 1.41e+03  & $<$2e-16 *** \\
        $\varrho$        & -1.57e+01 & 7.80e-01       & -4.65e+01 & 5.17e-01       & -1.65e+02 & 9.02e-02 .  \\
        $\varsigma$      & 1.39e+02  & 9.02e-03 **    & 1.75e+02  & 9.44e-03 **    & 3.20e+02  & 5.07e-04 *** \\
        MIP-gap          & 2.77e+02  & $<$2e-16 ***   & 2.40e+02  & $<$2e-16 ***   & 2.13e+02  & 1.34e-10 *** \\
        avg$\frac{\nu^{\max}}{\nu^{\min}}$  & 1.05e+00  & 9.97e-01       & -9.94e+01 & 7.53e-01       & -1.58e+02 & 7.13e-01 \\
        $\Bar{d}_{\mu}$  & -3.72e+01 & 4.83e-01       & 2.74e+00  & 9.68e-01       & -2.10e+02 & 2.39e-02 *  \\
        max$\frac{\nu^{\max}}{\nu^{\min}}$   & -5.46e+00  & 7.93e-01       & -1.86e+00  & 9.44e-01       & 1.26e+01  & 7.27e-01 \\
        $d^{\max}_{\mu}$ & 1.99e+01  & 3.98e-01       & -2.95e+00 & 9.23e-01       & 1.02e+02  & 1.31e-02 *  \\
        weight           & 9.32e+02  & 1.74e-01       & -7.68e+02 & 3.80e-01       & -5.68e+02 & 6.32e-01 \\
        \hline
        \multicolumn{7}{c}{Signif. codes:  $<$0.001 ‘***’ 0.001 ‘**’ 0.01 ‘*’ 0.05 ‘.’ } \\
        \hline
    \end{tabular}
\end{table}

\end{APPENDICES}
\end{document}